\newcommand*{\snorm}[1]{\lvert #1 \rvert} %% the seminorm norm: |..|
\newcommand*{\dnorm}[1]{\lVert #1\rVert} %% the double norm: ||..||
\newcommand*{\jump}[1]{\llbracket #1\rrbracket} %% the vector jump brackets: [[...]]
\newcommand*{\hhoVecvars}[1]{\underline{\bm{#1}}}
\newcommand{\transpose}{\intercal} %% the transpose sign
\newcommand{\realR}{\ensuremath{\mathbb{R}}}
\newcommand{\rd}{\ensuremath{\mathbb{R}^d}}
\newcommand{\sqd}[1]{\ensuremath{[{#1}]^d}}  %% [square brackets]^d
\newcommand{\mcalTh}{\mathcal{T}_h}
\newcommand{\mcalFh}{\mathcal{F}_h}
\newcommand{\mcalT}{\mathcal{T}}
\newcommand{\mcalF}{\mathcal{F}}
\newcommand{\mbbP}{\mathbb{P}}
\newcommand{\matvar}[1]{\ensuremath{\uuline{\bm{#1}}}}
\newcommand{\bcurl}{\ensuremath{{\bf{curl}\hspace{0.1em}}}}
\newcommand{\identityM}{\ensuremath{{\bf{I}}}}
\newcommand{\bdeltaT}{\ensuremath{\bm{\delta}_T^k}}
\newcommand{\bdeltaF}{\ensuremath{\bm{\delta}_{TF}^k}}
\newcommand{\comment}[1]{\iffalse #1 \fi }
\theoremstyle{plain}
\newtheorem{Theorem}{Theorem}[section]
\newtheorem{Lemma}[Theorem]{Lemma}
\begin{document}
	\begin{frontmatter}
		%%--- title 
		%%\title{A posteriori error analysis of hybrid high-order method for the Stokes problem \tnoteref{label0}} 
		\title{A posteriori error analysis of hybrid high-order method for the Stokes problem} 
		
		%%--- author
		\author[label1]{Yongchao Zhang} \ead{yoczhang@nwu.edu.cn}
		\author[label2]{Liquan Mei} \ead{lqmei@mail.xjtu.edu.cn}
		\author[label3]{Gang Wang\corref{label4}} \ead{gangwang.math@nwpu.edu.cn}
		
		%%--- address
		\address[label1]{School of Mathematics, Northwest University, Xi'an, Shaanxi 710069, P. R. China}
		\address[label2]{School of Mathematics and Statistics, Xi'an Jiaotong University, Xi'an, Shaanxi 710049, P. R. China}
		\address[label3]{School of Mathematics and Statistics, Northwestern Polytechnical University, Xi'an, Shaanxi 710062, P. R. China}
		
		%%--- cortext
		%%\tnotetext[label0]{This work is supported by NSF of China (11371289).}
		\cortext[label4]{Corresponding author.}
		
		%%--- abstract
		%% --- the abstract --- %%
		\begin{abstract}
			We present a residual-based a posteriori error estimator for the hybrid high-order (HHO) method for the Stokes model problem. Both the proposed HHO method and error estimator are valid in two and three dimensions and support arbitrary approximation orders on fairly general meshes. The upper bound and lower bound of the error estimator are proved, in which proof, the key ingredient is a novel stabilizer employed in the discrete scheme. By using the given estimator, adaptive algorithm of HHO method is designed to solve model problem. Finally, the expected theoretical results are numerically demonstrated on a variety of meshes for model problem.
		\end{abstract}
		
		\begin{keyword}
			Hybrid high-order method; Stokes problem; A posteriori error analysis; General meshes
		\end{keyword}
	\end{frontmatter}

	\section{Introduction}
	In recent years, the adaptive mesh refinement methods for solving both linear and nonlinear problems have been the object of intense study, since it can derive more uniform error distribution on each element of the mesh partition, while providing the desired accuracy and minimum cost. In particular, these methods are very effective for problems with exhibits singularities or strong geometrical localized variations. The main feature of the adaptive procedure is justified by using a posteriori error estimate to provide computable lower and upper error bounds, which serves then as error indicators on elements for the adaptive mesh coarsening or refinement \cite{1994_JCAM_adaptive}. A posteriori error analysis of the finite element method (FEM) for second-order elliptic equation can be traced back to \cite{1978_JNME_Babuska,1978_SIAM_Babuska}, and then a large number of related studies emerged, such as \cite{1994_JCAM_adaptive,1997_CMAME_Ainsworth,2005_CS_Thomas,2012_ANM_nonconforming}. However, the classical FEM cannot work well on general polygonal elements and hanging nodes are not allowed, which means that local mesh adaption requires special strategies to either treat or prevent hanging nodes. This drawback also limits the application of FEM adaptive algorithms, especially for complex geometries or problems with certain physical constraints, some such examples can be found in \cite{1995_CMAME_Somnath,2011_book_Micromechanical,2015_book_Tissue}. 
	
	To get more flexible mesh generation and adaptation, the numerical methods on general meshes have received significant attention over the last decade. Novel approaches have been rediscovered or developed on general polytopal meshes, here, we mention, for example, discontinuous Galerkin (DG) method \cite{2010_MC_DGNS}, hybrid discontinuous Galerkin (HDG) method \cite{2016_chapter_Static,2016_ESAIM_Bridging}, weak Galerkin (WG) method \cite{2015_IJNAM_WGpoly}, virtual element method (VEM) \cite{2013_VEMBasic_Brezzi}, hybrid high-order (HHO) method \cite{2014_CMAM_Pietro}, etc., for which, the corresponding adaptive algorithms have also been well constructed, such as \cite{2014_M3AS_hpDG,2013_SIAM_posterioriHDG,2014_JSC_posterioriWG,2015_ESAIM_posterioriVEM,2016_JCP_posterioriHHO}. Our focus is here on the HHO method, which was originally introduced in \cite{2014_CMAM_Pietro,2015_CMAME_hhoelasticity} to solve the quasi-incompressible linear elasticity models. On each element, the HHO discretization space hinges on local reconstruction operators from hybrid polynomial unknowns at the interior and faces of the element. Benefiting from the local discretization space, the HHO method supports general meshes in both two and three dimension and arbitrary polynomial order $ k\geq 0 $. The HHO framework has been used to solve various PDEs, a non-exhaustive list includes \cite{2017_DiPietro_LerayLions,2018_SIAMJSC_Chave,2018_JSC_hhoNS,2020_JCP_ZhangSD}.
	
	The steady Stokes model problem is considered in this paper, which describes the viscous incompressible flow with high viscosity. A posteriori error analysis of the model problem, for different numerical methods, has been the subject of various investigations and we refer to the pioneer works, e.g., \cite{1991_SIAM_Bank,2005_MC_Ainsworth,2012_NM_unified,2019_SIAM_BaoWG,2020_JSC_VEMPostStokes} for more detailed reviews on the subject. Remarkably, in \cite{2012_NM_unified}, based on $ \sqd{H_0^1(\Omega)} $-conforming velocity reconstruction and locally conservative flux reconstruction, the authors gave a unified framework for a posteriori error estimation for the Stokes problem and applied this framework to the conforming, nonconforming, mixed FEMs, the DG method and a general class of finite volume methods. As for the HHO method, several results of a posterior error analysis have been available. The authors in \citep[Chapter 4]{2020_hho_book} derived a posteriori error estimates for the Poisson equation, a reliable upper bound and efficient lower bound of the error in terms of residual-based estimators were proved. In \cite{2016_JCP_posterioriHHO}, a posteriori error estimate for the mixed HHO method was proposed and an adaptive resolution algorithm for problems in electrostatics was presented by using the corresponding estimators in the context. 
	
	As far as we know, there are few studies of a posteriori error analysis of the HHO method for the Stokes model problem. The main result of this work is that, for the model problem, we devise the new a posteriori error estimator and give the posteriori-based adaptive HHO algorithm for model problem. Here, we mention that the key point to use the abstract error estimate in this paper is a new reformulation of the stabilization term in the HHO discretization scheme. We prove that the stabilization term is polynomial consistency and leads to the corecivity of the velocity discretization, which ensures the well-posedness of the discrete problem and local lower bound of the error estimator. As a consequence, we establish a simple a posteriori error analysis in our context, which bases on the Helmholtz decomposition of the error and the discrete inf-sup condition. Similar techniques have been employed in \cite{1995_MC_nonc_Stokes} for nonconforming FEM method and in \cite{2019_SIAM_BaoWG,2017_JAM_Xiexiaoping} for weak Galerkin method. The a posteriori error estimator contains of the stabilizer, the divergence and the jump of discrete velocity reconstruction, in addition, both upper and lower bounds involve the data oscillation. Furthermore, all terms can be computed locally on general mesh elements in two or three dimensions.
	
	The rest of this paper is organized as follows. In Section \ref{sec_preliminaries} we introduce the Stokes model problem, give the notations for continuous and discrete settings, and recall some basic results on broken polynomial spaces. In Section \ref{sec_discretization} we define the discrete spaces for velocity and pressure, and establish the local reconstructions and discrete problem. Moreover, the errors for velocity and pressure are defined, and the convergence analysis is carried out in Lemma \ref{sec_model_lem2}. Section \ref{sec_posterioriError} collects the a posteriori error estimator, some technical lemmas relevant to the analysis of the upper and lower bounds, which are also the main theorems of this section. In Section \ref{num_exams} we present some numerical experiments to validate the effectiveness of the estimator and theoretical results. Conclusions and perspectives are discussed in Section \ref{Conclusion}.

	\comment{
		
		The key point to derivate the upper bound of the estimators in our context is a new reformulation of the stabilization term in the HHO discretization scheme.
		
		We prove that the stabilization ensures the coercivity 
		
		Here, the similar stabilization term has been mentioned in 
		
		a posteriori-based adaptive HHO algorithm devised for problems in electrostatics. 
		
		2016_JCP_posterioriHHO : 
		The main results of this work are new a posteriori error estimate for the MHO method and the use of the corresponding estimators in the context of an adaptive resolution algorithm for problems in electrostatics. For the derivation of our error estimators, we use a residual-based approach that relies on an abstract estimate inspired by [22]; cf. also [23]. We also cite [24] for the idea of relying on the equivalent primal formulation to derive a posteriori bounds for mixed methods. 
		
		A key point to use the abstract error estimate in our context is a new reformulation of the stabilization term in the MHO method, which establishes a new connection with the hybridized version studied in [21]. 
		
		The upper bound thus derived has no undetermined constants and, possibly up to minor modifications, also extends to sibling schemes such as the Hybrid High-Order method of [25] (cf. also [26]) and the DGA method of [20]. Adaptive resolution methods for related high-order polyhedral discretization methods, each with its own distinguishing features, are developed in [27–29].
	}
	
	\comment{
		then analysis of nonconforming FEM in [] followed.
		
		A posteriori error analysis for the numerical solution of differential equations started in late 1970s. 
		
		二阶椭圆方程的有限元方法的后验误差分析最早可以追溯到 [1,2].
		
		A posteriori error analysis for the numerical solution of differential equations started in late 1970s with the seminal work by Babuška and Rheinboldt [1,2]. 
		
		The research in \cite{1978_JNME_Babuska,1978_SIAM_Babuska} provided the earliest mathematical theory for a posteriori error estimation by using conforming finite element method (FEM), and the nonconforming FEM 
		
		A posteriori error analysis of finite element methods for the model problem 
	}
	
	\comment{ 
		(2015 A posteriori error analysis for Navier–Stokes equations coupled with Darcy problem)
		(JingFei  JCAM posteriori error)

		relies on the numerical solution and the data of the model problem. The 
		
		provides computable upper and lower error bounds. 
		
		to find a reliable and efficient a posteriori error estimator, which relies on the numerical solution and the data of the model problem.

		The basis of a successful adaptive procedure is the availability of a reliable and efficient a posteriori error estimator of a numerical solution, which is a computable quantity depending only on the problem data and the numerical solution. 
		
		The key features to these methods are a posteriori error analysis leading to a mesh refinement strategy. In fact, a posteriori error estimators are quantities that can be easily calculated from the computed numerical solution and the data of the problem. 
		
		Because we cannot directly calculate the error of the numerical solution, to find reliable and efficient a posteriori error estimators is the key to the adaptive procedure. The computability of a posteriori error estimator relies on the numerical solution and the data of the problem. Based on reliable and efficient error indicators on elements, the adaptive mesh refinement or coarsening can be implemented [23].
		
		The key of this technique is to design an efficient and reliable a posteriori error estimator using the numerical solution and the known data like the load term. 
		
		In this paper, we derive a residual-based energy norm a posteriori error estimator for the steady Stokes model problem, which describes the viscous incompressible flow. 
		
		In particular, it is very effective for problems with strong geometrical localized variations or exhibits singularities, 
		
		Especially, it is very efficient for problems with strong geometrical localized variations or exhibits singularities since it can derive more uniform error distribution on each element of the domain and provide both desired accuracy and minimum cost at the same time. 
		
		The basis of a successful adaptive procedure is the availability of a reliable and efficient a posteriori error estimator of a numerical solution, which is a computable quantity depending only on the problem data and the numerical solution. 
	}
	
	\section{Preliminaries} \label{sec_preliminaries}
	In this section, we introduce the model problem and the settings of continuous Sobolev spaces. Then, we recall the notions of discrete mesh and give some basic results on the local broken polynomial spaces.

	\subsection{Model problem}\label{sec_model_subsec1}
	In this paper, we consider the following Stokes problem on a bounded and simply connected domain $ \Omega\subset \rd $ ($ d=2,3 $): given the body force $ \bm{f}=\big(f_i(x_1,...,x_d)\big)_{1\leq i\leq d}: \Omega\rightarrow \rd $, find the fluid velocity $ \bm{u}=\big(u_i(x_1,...,x_d)\big)_{1\leq i\leq d}: \Omega\rightarrow \rd $ and pressure $ p(x_1,...,x_d): \Omega\rightarrow \realR $, such that
	\begin{subequations}\label{sec_model_eq1}
		\begin{align}
			\label{sec_model_eq1a}
			-\nu \Delta \bm{u} + \nabla p &= \bm{f} \quad \text{in}\ \Omega, \\ 
			\label{sec_model_eq1b}
			\nabla\cdot\bm{u} &= 0 \quad \,\text{in}\ \Omega, \\ 
			\label{sec_model_eq1c}
			\bm{u} &= \bm{0} \quad \text{on}\ \partial \Omega, 
		\end{align}
	\end{subequations}
	where $ \nu>0 $ denotes the kinematic viscosity and, the Laplace operators $ \Delta=:\sum_{i=1}^d\partial_{x_i x_i} $, gradient operator $ \nabla =:(\partial_{x_i})_{1\leq i\leq d} $, divergence operator $ \nabla\cdot $, are such that $ (\Delta\bm{u})_{j\, (1\leq j\leq d)} = \sum_{i=1}^d\partial_{x_i x_i} u_j $, $ (\nabla \bm{u})_{ij\, (1\leq i, j\leq d)} = \partial_{x_i}u_j $, $ \nabla\cdot\bm{u} = \sum_{i=1}^d\partial_{x_i} u_i $, respectively.

	Throughout this paper, for a bounded subset $ D $ in $ \mathbb{R}^n $ ($ n=d, d-1 $), we adopt the notations $ W^{s,r}(D) $ ($ s, r \geq 0 $) to indicate the standard Sobolev space equipped with the norm $ \dnorm{\,\cdot\,}_{s,r,D} $ and seminorm $ \snorm{\,\cdot\,}_{s,r,D} $. For $ s>0, r=2 $, we use the notation $ H^{s}(D):=W^{s,2}(D) $ with norm $ \dnorm{\,\cdot\,}_{s,D} $ and seminorm $ \snorm{\,\cdot\,}_{s,D} $ accordingly. The space $ W^{0,r}(D) $ coincides with $ L^r(D) $, in particular, $ W^{0,2}(D) = L^2(D) $, for which the norm and inner product are represented by $ \dnorm{\,\cdot\,}_D $ and $ (\cdot,\cdot)_D $, respectively. When $ D=\Omega $, we will omit the subscript $ D $ in the norms and inner product. By a slight abuse of notation, the above (semi)norm notations are also applicable to vector spaces $ [W^{s,r}(D)]^n $ (and matrix spaces $ [W^{s,r}(D)]^{n\times n} $). Moreover, another matrix space is defined by 
	\begin{align}\label{sec_model_eq1_add1}
		\bm{H}({\rm div};D):=\Big\{ \bm{\tau}\in [L^2(D)]^{n\times n}: \nabla\cdot \bm{v} \in [L^2(D)]^n \Big\}
	\end{align}
	with the norm $ \dnorm{\bm{\tau}}^2_{{\rm div}, D}:= \dnorm{\bm{\tau}}_D^2 + \dnorm{\nabla\cdot\bm{\tau}}_D^2 $.
	
	Next, we assume the body force $ \bm{f}\in\sqd{L^2(\Omega)} $, and define the spaces for the velocity and pressure as follows:
	\begin{align*}
		\bm{V}:= \sqd{H_0^1(\Omega)}=\Big\{\bm{v}\in \sqd{H^1(\Omega)}: \bm{v}|_{\partial\Omega}=\bm{0}\Big\}, \qquad Q:=L_0^2(\Omega) = \Big\{q\in L^2(\Omega): \int_\Omega q = 0\Big\} .
	\end{align*}
	A standard weak form in the primary velocity-pressure formulation for the model problem \eqref{sec_model_eq1a}$ - $\eqref{sec_model_eq1c} reads: find $ \bm{u}\in \bm{V} $ and $ p\in Q $ such that
	\begin{subequations}\label{sec_model_eq2}
		\begin{align}
			\label{sec_model_eq2a}
			\nu a(\bm{u},\bm{v}) + b(\bm{v},p) &= (\bm{f},\bm{v}) \hspace{1.74em} \forall \bm{v} \in\bm{V}, \\
			\label{sec_model_eq2b}
			-b(\bm{u},q) &= 0 \hspace{3.72em} \forall q\in Q,
		\end{align}
	\end{subequations}
	with bilinear forms $ a: \bm{V}\times\bm{V}\rightarrow \realR $ and $ b: \bm{V}\times Q\rightarrow\realR $ defined by 
	\begin{align}\label{sec_model_eq3}
		a(\bm{w},\bm{v}):=(\nabla\bm{w},\nabla\bm{v}), \quad b(\bm{v}, q):=-(\nabla\cdot\bm{v}, q).
	\end{align}
	
	It is well-known that the well-posedness of weak formulation \eqref{sec_model_eq2a}$ - $\eqref{sec_model_eq2b} (\citep[Theorem 4.6]{2016_JohnVolker_book}) hinges on the coercivity of the bilinear form $ a $ together with the inf-sup condition
	\begin{align}\label{sec_model_eq4}
		\beta \leq \inf_{q\in Q\backslash\{0\}}\sup_{\bm{v}\in\bm{V}\backslash\{\bm{0}\}}\frac{b(\bm{v},q)}{\dnorm{\nabla\bm{v}}\dnorm{q}},
	\end{align}
	where $ \beta $ is a positive constant.

	\subsection{Meshes and basic results}\label{sec_model_subsec2}
	We recall the mesh-related notations and some basic results on broken polynomial spaces from \cite{2020_hho_book,2018_JSC_hhoNS}.
	
	Let $ \mathcal{H}\subset (0, +\infty) $ denote a countable set of meshsizes having $ 0 $ as its unique accumulation point. We consider $ h $-refined mesh sequences $ (\mathcal{T}_h)_{h\in\mathcal{H}} $, and for all $ h\in\mathcal{H} $, $ \mathcal{T}_h $ is a finite collection of nonempty disjoint open polygonal (or polyhedral) elements $ T $ with diameter $ h_T $ such that $ \overline{\Omega} = \bigcup_{T\in\mathcal{T}_h} \overline{T} $ and $ h=\max_{T\in\mathcal{T}_h} h_T $. The set of mesh faces $ \mathcal{F}_h $ is a finite collection of disjoint subsets $ F $ with diameter $ h_F $, where $ F $ is an open subset of a hyperplane of $ \rd $ such that $ \snorm{F}_{d-1}>0 $ and $ \snorm{\overline{F}\backslash F}_{d-1}=0 $ with $ \snorm{\,\cdot\,}_{d-1} $ denoting the $ (d-1) $-dimensional Hausdorff measure. What's more, there holds that $ \bigcup_{T\in\mathcal{T}_h}\partial T = \bigcup_{F\in\mathcal{F}_h}\overline{F} $ and for all $ F\in\mathcal{F}_h $, (i) either $ F $ is an interface, i.e., there exist $ T_1, T_2\in\mathcal{T}_h $ such that $ F=\partial T_1 \cap\partial T_2 $, or (ii) $ F $ is a boundary face, i.e., there exists $ T\in\mathcal{T}_h $ such that $ F=\partial T \cap \partial \Omega $. We use $ \mathcal{F}_h^i,  \mathcal{F}_h^b $ to denote the collections of interfaces and boundary faces, respectively, such that $ \mathcal{F}_h = \mathcal{F}_h^i \cup \mathcal{F}_h^b $.
	
	For all $ T\in\mathcal{T}_h $, the set $ \mathcal{F}_T:=\{F\in\mathcal{F}_h:F\subset\partial T\} $ collects the faces lying on the boundary of $ T $ and, the sets $ \mathcal{T}_{N,T}:=\{T'\in\mathcal{T}_h:T'\cap T \neq \emptyset\} $, $ \mathcal{F}_{N,T}:=\{F\in\mathcal{F}_h:\overline{F}\cap \partial T \neq \emptyset\} $ collect the elements and faces sharing at least one node with $ T $. For all $ F\in\mathcal{F}_T $, we denote by $ \bm{n}_{TF} $ the normal to $ F $ pointing out of $ T $. For all $ F\in\mcalFh $, the set $ \mathcal{T}_F:=\{ T_1, T_2\in\mcalTh: \partial T_1\cap \partial T_2=F \} $ collects the elements whose boundary containing $ F $, and note that, for $ F\in\mcalFh^b $, we set $ T_1=T_2 $ to be compatible with this case. A normal vector $ \bm{n}_F $ is associated to interior faces by fixing once and for all an orientation, whereas for boundary faces $ \bm{n}_F $ points out of $ \Omega $.
	
	Additionally, we assume that the $ h $-refined mesh $ (\mathcal{T}_h)_{h\in\mathcal{H}} $ is regular in the sense of \citep[Definition 1.9]{2020_hho_book}, i.e., $ \mathcal{T}_h $ admits a matching simplicial submesh $ \mathfrak{T}_h $ and there exists a real number $ \varrho\in (0,1) $ independent of $ h $ such that for all $ h\in \mathcal{H} $, the following properties hold: (i) shape regularity: for all simplex $ S\in\mathfrak{T}_h $ with diameter $ h_S $ and inradius $ r_S $, $ \varrho h_S\leq r_S $; (ii) contact regularity: for all $ T\in\mathcal{T}_h $ and $ S\in\mathfrak{T}_h $ such that $ S\subset T $, $ \varrho h_T\leq h_S $. 
	
	Under the mesh regular settings above, for all $ h\in\mathcal{H} $, all $ T\in\mathcal{T}_h $, and all $ F\in\mcalF_T $, the following comparison of element and face diameters holds:
	\begin{align}\label{sec_model_eq4_1}
		2\varrho^2 h_T \leq h_F \leq h_T.
	\end{align}
	
	Let $ D $ be as in the previous section and denote by $ \mathbb{P}^l(D) $ $ (l\geq 0) $ the space spanned by the restrictions to $ D $ of polynomials in the space variables of total degree $ l $. We next introduce the $ L^2 $-orthogonal projector $ \pi_D^{0,l}: L^1(D)\rightarrow \mathbb{P}^l(D) $ such that, for all $ v\in L^1(D) $,
	\begin{align}\label{sec_model_eq5}
		(\pi_D^{0,l}v-v, w)_D = 0 \quad \forall w\in\mathbb{P}^l(D),
	\end{align}
	and the elliptic projector $ \pi_D^{1,l}: W^{1,1}(D)\rightarrow \mathbb{P}^l(D) $ such that, for all $ v\in W^{1,1}(D) $,
	\begin{align}\label{sec_model_eq6}
		(\nabla(\pi_D^{1,l}v-v), \nabla w)_D = 0 \quad \forall w\in\mathbb{P}^l(D),
	\end{align}
	with $ (\pi_D^{1,l}v-v,1)_D = 0 $. The vector (and matrix) valued  $ L^2 $-orthogonal and elliptic projectors, denoted by $ \bm{\pi}_D^{0,l} $ and $ \bm{\pi}_D^{1,l} $, respectively, are obtained applying $ \pi_D^{0,l} $ and $ \pi_D^{1,l} $ component-wise. To alleviate the notation, only scalar variables and operators are considered in the following representation, the vector (and matrix) cases can be obtained accordingly.
	
	The $ L^2 $-orthogonal and elliptic projectors have the following optimal $ W^{s,r} $-approximation properties (\citep[Chapter 1]{2020_hho_book}): let $ s\in \{0, ..., l+1\} $, $ r\in[1, +\infty] $, then there is a real number $ C>0 $ only depending on $ d $, $ \varrho $, $ l $, $ s $ and $ r $ such that, for all $ h\in\mathcal{H} $, $ D\in\mathcal{T}_h $ (or $ D\in\mathcal{F}_h $), and all $ v\in W^{s,r}(D) $, $ m\in\{0, ..., s-1\} $,
	\begin{align}\label{sec_model_eq7}
		\snorm{v - \pi_D^{0,l}v}_{m,r,D} \leq C h_D^{s-m}\snorm{v}_{s,r,D}, \qquad \snorm{v - \pi_D^{1,l}v}_{m,r,D} \leq C h_D^{s-m}\snorm{v}_{s,r,D},
	\end{align}
	and for all $ T\in\mathcal{T}_h $, $ F\in\mathcal{F}_T $, it holds that 
	\begin{align}\label{sec_model_eq8}
		h_T^{\frac{1}{r}}\snorm{v-\pi_T^{1,l}v}_{m,r,F} \leq C h_T^{s-m}\snorm{v}_{s,r,T},
	\end{align}
	moreover, if $ s\geq 1 $, 
	\begin{align}\label{sec_model_eq9}
		h_T^{\frac{1}{r}}\snorm{v-\pi_T^{0,l}v}_{m,r,F} \leq C h_T^{s-m}\snorm{v}_{s,r,T}.
	\end{align}
	We also need the following discrete inverse inequality: for all $ T\in\mathcal{T}_h $ and $ v\in\mathbb{P}^l(T) $, there exists real number $ C>0 $ independent of $ h $ and of $ T $ such that,
	\begin{align}\label{sec_model_eq10}
		\dnorm{\nabla v}_{0,r,T} \leq C h_T^{-1}\dnorm{v}_{0,r,T}.
	\end{align}
	And we recall the continuous trace inequality: there is a real number $ C>0 $ independent of $ h $ such that, for all $ T\in\mathcal{T}_h $ it holds for all $ v\in W^{1,r}(T) $,
	\begin{align}\label{sec_model_eq11}
		\sum_{F\in\mcalF_T} h_T^{\frac{1}{r}}\dnorm{v}_{0,r,F} \leq C (\dnorm{v}_{0,r,T}+h_T\dnorm{\nabla v}_{0,r,T}).
	\end{align}
	By combining \eqref{sec_model_eq10} and \eqref{sec_model_eq11}, we get the discrete the trace inequality: for all $ T\in\mathcal{T}_h $ and $ v\in \mathbb{P}^l(T) $, it holds that
	\begin{align}\label{sec_model_eq12}
		\sum_{F\in\mcalF_T} h_T^{\frac{1}{r}}\dnorm{v}_{0,r,F} \leq C\dnorm{v}_{0,r,T},
	\end{align}
	where $ C>0 $ is a real number independent of $ h $.
	
	For completeness, at the global level, we define the broken polynomial space 
	\begin{align*}
		\mathbb{P}^l(\mathcal{T}_h) := \Big\{v\in L^2(\Omega): v_{|T}\in\mathbb{P}^l(T)\hspace{1em} \forall T\in\mathcal{T}_h\Big\},
	\end{align*}
	the broken Sobolev space 
	\begin{align*}
		W^{s,r}(\mathcal{T}_h) := \Big\{v\in L^r(\Omega): v_{|T}\in W^{s,r}(T) \hspace{1em} \forall T\in\mathcal{T}_h \Big\},
	\end{align*}
	on which, for all $ \bm{v}_h\in\mathbb{P}^l(\mathcal{T}_h) $ (or $ \bm{v}_h\in W^{s,r}(\mathcal{T}_h) $), the norm and seminorm are defined by 
	\begin{align*}
		\dnorm{\bm{v}_h}_{\star, \mcalTh} := \sum_{T\in\mcalTh} \dnorm{\bm{v}_h}_{\star, T}, \quad \snorm{\bm{v}_h}_{\star, \mcalTh} := \sum_{T\in\mcalTh} \snorm{\bm{v}_h}_{\star, T},
	\end{align*}
	where $ \star $ may take different values in different places. The broken gradient operator {defined on $ W^{1,1}(\mathcal{T}_h) $} is denoted by $ \nabla_h $, similarly, the $ L^2 $-orthogonal and elliptic projectors {defined on $ \mathbb{P}^l(\mathcal{T}_h) $} are respectively denoted by $ \pi_h^{0,l} $ and $ \pi_h^{1,l} $.

	\section{Discretization}\label{sec_discretization}
	In this section, we define the spaces of discrete unknowns and the local reconstructions, state the discrete problem. 
	
	\subsection{Discrete spaces}
	Let a polynomial degree $ k\geq 0 $ be fixed. We define the discrete space for the velocity as 
	\begin{align*}
		\hhoVecvars{V}_h^k:=\Big\{ \hhoVecvars{v}_h=((\bm{v}_T)_{T\in\mathcal{T}_h}, (\bm{v}_F)_{F\in\mathcal{F}_h}): \bm{v}_T\in\sqd{\mbbP^k(T)} \ \ \forall T\in\mcalT_h \text{ \ and \ } \bm{v}_F\in\sqd{\mbbP^k(F)} \ \ \forall F\in\mcalF_h \Big\}.
	\end{align*}
	To account for the homogeneous Dirichlet boundary condition in a stronger manner for the velocity, we introduce the subspace
	\begin{align*}
		\hhoVecvars{V}_{h,0}^k:=\Big\{ \hhoVecvars{v}_h\in\hhoVecvars{V}_{h}^k: \bm{v}_F=\bm{0} \quad \forall F\in\mathcal{F}_h^b \Big\},
	\end{align*}
	and we define the zero-average constraint space for the pressure as follows
	\begin{align*}
		Q_h^k:=\Big\{ q_h\in\mbbP^k(\mcalTh): \int_\Omega q_h = 0 \Big\}.
	\end{align*}
	The restriction of $ \hhoVecvars{v}_h\in\hhoVecvars{V}_h^k $ and $ q_h\in Q_h^k $ to an element $ T\in\mcalT_h $ is denoted by $ \hhoVecvars{V}_T^k $, $ \hhoVecvars{v}_T=(\bm{v}_T, (\bm{v}_F)_{F\in\mcalF_T}) $ and $ q_T $, respectively. Also, we denote by $ \bm{v}_h $ (no underline) the function in $ \sqd{\mbbP^k(\mcalT_h)} $ such that 
	\begin{align*}
		\bm{v}_{h|T}:=\bm{v}_T \quad \forall T\in\mcalT_h,
	\end{align*}
	and for any $ F\in\mcalFh^i $, denote by $ T_1 $, $ T_2 $ the distinct elements of $ \mcalTh $ such that $ F\subset \partial T_1 \cap \partial T_2 $, moreover, we fix an arbitrary numbering of $ T_1 $ and $ T_2 $ to introduce the jump of $ \bm{v}_h $ across $ F\in\mcalFh $, as 
	\begin{equation}\label{sec_model_eq13_1}
		\jump{\bm{v}_h}_{F} := \left\{
		\begin{aligned}
			&(\bm{v}_{h| T_1} )_{|F} - (\bm{v}_{h| T_2} )_{|F}   \quad \text{if } F\in\mcalFh^i,\\
			& (\bm{v}_h)_{|F} \hspace{7.21em} \text{if } F\in\mcalFh^b.
		\end{aligned}
		\right.
	\end{equation}
	
	We define on $ \hhoVecvars{V}_h^k $ the seminorm $ \dnorm{\,\cdot\,}_{1,h} $ such that, here, we again abuse the notation $ \dnorm{\,\cdot\,}_{1,T} $, for all $ \hhoVecvars{v}_h\in\hhoVecvars{V}_h^k $,
	\begin{align}\label{sec_model_eq13}
		\dnorm{\hhoVecvars{v}_h}^2_{1,h}:=\sum_{T\in\mcalT_h}\dnorm{\hhoVecvars{v}_T}_{1,T}^2,
	\end{align}
	where, for all $ T\in\mcalT_h $,
	\begin{align}\label{sec_model_eq14}
		\dnorm{\hhoVecvars{v}_T}_{1,T}^2:= \dnorm{\nabla\bm{v}_T}_T^2 + \snorm{\hhoVecvars{v}_T}_{1,\partial T}^2, \qquad \snorm{\hhoVecvars{v}_T}_{1,\partial T}^2:=\sum_{F\in\mcalF_h}h_F^{-1}\dnorm{\bm{v}_F-\bm{v}_T}_F^2.
	\end{align}
	Indeed, it is easy to verify that the map $ \dnorm{\,\cdot\,}_{1,h} $ defines a norm on $ \hhoVecvars{V}_{h,0}^k $.

	The vector of discrete variables corresponding to a smooth function on $ \Omega $ is obtained by the global interpolation operator $ \hhoVecvars{I}_h^k: \sqd{H^1(\Omega)}\rightarrow \hhoVecvars{V}_h^k $ such that, for all $ \bm{v}\in\sqd{H^1(\Omega)} $
	\begin{align}\label{sec_model_eq15}
		\hhoVecvars{I}_h^k\bm{v}:=((\bm{\pi}_T^{0,k}\bm{v}_{|T})_{T\in\mcalT_h}, (\bm{\pi}_F^{0,k}\bm{v}_{|F})_{F\in\mcalF_h}),
	\end{align}
	its restriction to an element $ T\in\mcalT_h $ is denoted by $ \hhoVecvars{I}_T^k $. The following boundedness property holds for the local interpolation operator $ \hhoVecvars{I}_T^k $ (\citep[Proposition 2.2]{2020_hho_book}): there exists a real number $ C_I>0 $ independent of $ h $ and $ \nu $, but possibly depending on $ d $, $ \varrho $ and $ k $, such that, for all $ \bm{v}\in\sqd{H^1(T)} $,
	\begin{align}\label{sec_model_eq16}
		\dnorm{\hhoVecvars{I}_T^k\bm{v}}_{1,T} \leq C_I \dnorm{\nabla\bm{v}}_T.
	\end{align}
	With the above boundedness of $ \hhoVecvars{I}_T^k $, we have 
	\begin{align}\label{sec_model_eq16_1}
		h_F^{-\frac{1}{2}}\dnorm{v-\pi_F^{0,l}v}_{F}\leq C\dnorm{\nabla v}_{T},
	\end{align}
	in which, $ C>0 $ is a real number independent of $ h $ and, the detailed proof can be found in \citep[Proposition 4.6]{2020_hho_book}.
	
	\iffalse
	{\color{blue}
		The following boundedness property holds for the global interpolation operator $ \hhoVecvars{I}_h^k $ (\citep[Proposition 8.9]{2020_hho_book}): there exists a real number $ C_I>0 $ independent of $ h $ and $ \nu $, but possibly depending on $ d $, $ \varrho $ and $ k $, such that, for all $ \bm{v}\in\sqd{H^1(\Omega)} $,
		\begin{align}\label{sec_model_eq16_}
			\dnorm{\hhoVecvars{I}_h^k\bm{v}}_{1,h} \leq C_I \snorm{\nabla\bm{v}}.
		\end{align}
	}
	\fi

	\subsection{Local reconstructions}
	Let an element $ T\in\mcalTh $ and polynomial degree $ k\geq 0 $ be fixed. The local velocity reconstruction operator $ \bm{r}_T^{k+1}: \hhoVecvars{V}_T^k\rightarrow\sqd{\mbbP^{k+1}(T)} $ is defined such that, for all $ \hhoVecvars{v}_T\in\hhoVecvars{V}_T^k $ and $ \matvar{\tau}\in\nabla\sqd{\mbbP^{k+1}(T)} $,
	\begin{align}\label{sec_model_eq17}
		(\nabla \bm{r}_T^{k+1}\hhoVecvars{v}_T, \matvar{\tau})_T = (\nabla \bm{v}_T, \matvar{\tau})_T + \sum_{F\in\mcalF_T}(\bm{v}_F-\bm{v}_T, \matvar{\tau}\bm{n}_{TF})_F,
	\end{align}
	and the mean-value of $ \bm{r}_T^{k+1}\hhoVecvars{v}_T $ in $ T $ is set equal to that of $ \bm{v}_T $. The divergence reconstruction operator $ D_T^k:\hhoVecvars{V}_T^k\rightarrow \mbbP^k(T) $ is defined such that, for all $ \hhoVecvars{v}_T\in\hhoVecvars{V}_T^k $ and $ q\in\mbbP^k(T) $,
	\begin{align}\label{sec_model_eq18}
		(D_T^k\hhoVecvars{v}_T, q)_T = (\nabla\cdot\bm{v}_T, q)_T + \sum_{F\in\mcalF_T}(\bm{v}_F-\bm{v}_T,q\bm{n}_{TF})_F.
	\end{align}
	By recalling the definition of $ \pi_T^{0,k} $, $ \bm{\pi}_T^{1,k} $ and $ \hhoVecvars{I}_T^k $, we infer that, for all $ \bm{v}\in\sqd{H^1(T)} $,
	\begin{align}\label{sec_model_eq19}
		\bm{r}_T^{k+1}\hhoVecvars{I}_T^k\bm{v} = \bm{\pi}_T^{1,k+1}\bm{v}, \qquad D_T^k\hhoVecvars{I}_T^k\bm{v} = \pi_T^{0,k}(\nabla\cdot \bm{v}).
	\end{align}
	
	We also define the operators $ \bdeltaT: \hhoVecvars{V}_T^k\rightarrow \sqd{\mbbP^k(T)} $ and, for all $ F\in\mcalF_T $, $ \bdeltaF: \hhoVecvars{V}_T^k\rightarrow \sqd{\mbbP^k(F)} $ such that, for all $ \hhoVecvars{v}_T\in\hhoVecvars{V}_T^k $,
	\begin{align}\label{sec_model_eq19_1}
		\bdeltaT\hhoVecvars{v}_T:=\bm{\pi}_T^{0,k}(\bm{r}_T^{k+1}\hhoVecvars{v}_T-\bm{v}_T), \quad \bdeltaF\hhoVecvars{v}_T:= \bm{\pi}_F^{0,k}(\bm{r}_T^{k+1}\hhoVecvars{v}_T-\bm{v}_F)\hspace{1em} \forall F\in\mcalF_T, 
	\end{align}
	which satisfies 
	\begin{align}\label{sec_model_eq19_2}
		(\bdeltaT\hhoVecvars{v}_T, (\bdeltaF\hhoVecvars{v}_T)_{F\in\mcalF_T}) = \hhoVecvars{I}_T^k\bm{r}_T^{k+1}\hhoVecvars{v}_T - \hhoVecvars{v}_T.
	\end{align}
	
	\subsection{Viscous term}
	The viscous term is discretised by means of the bilinear form $ a_h:\hhoVecvars{V}_h^k\times\hhoVecvars{V}_h^k\rightarrow \realR $ such that, for all $ \hhoVecvars{w}_h, \hhoVecvars{v}_h\in \hhoVecvars{V}_h^k $,
	\begin{align}\label{sec_model_eq20}
		a_h(\hhoVecvars{w}_h, \hhoVecvars{v}_h):=\sum_{T\in\mcalT_h} a_T(\hhoVecvars{w}_T,\hhoVecvars{v}_T),
	\end{align}
	where, the local contribution is such that 
	\begin{align}\label{sec_model_eq21}
		a_T(\hhoVecvars{w}_T,\hhoVecvars{v}_T) := (\nabla\bm{r}_T^{k+1}\hhoVecvars{w}_T,\nabla\bm{r}_T^{k+1}\hhoVecvars{v}_T )_T + s_T(\hhoVecvars{w}_T,\hhoVecvars{v}_T),
	\end{align}
	with the stabilization bilinear form $ s_T $ defined as 
	\begin{align}\label{sec_model_eq21_1}
		s_T(\hhoVecvars{w}_T, \hhoVecvars{v}_T) := h_T^{-2} (\bdeltaT\hhoVecvars{w}_T, \bdeltaT\hhoVecvars{v}_T)_T +  \sum_{F\in\mcalF_T}h_F^{-1}(\bdeltaF\hhoVecvars{w}_T, \bdeltaF\hhoVecvars{v}_T)_F,
	\end{align}
	
	The subtle choice of $ s_T $ ensures the following designed conditions have been originally presented in \cite{2018_ESAIM_Daniele}.
	\begin{Lemma}[Local stabilisation bilinear form $ s_T $] \label{sec_model_lem1}
		The proposed local stabilisation bilinear form $ s_T $ in \eqref{sec_model_eq21_1} satisfies the following properties:
		\begin{enumerate}
			\item[\rm{(S1)}] {\rm Symmetry and positivity.} $ s_T $ is symmetric and positive semidefinite;
			\item[\rm{(S2)}] {\rm Stability and boundedness.} There is a real number $ \lambda $ independent of $ h $ and $ T $ such that, for all $ \hhoVecvars{v}_T\in\hhoVecvars{V}_T^k $,
			\begin{align}\label{sec_model_eq22}
				\lambda\dnorm{\hhoVecvars{v}_T}_{1,T}^2\leq a_T(\hhoVecvars{v}_T,\hhoVecvars{v}_T) \leq \lambda^{-1} \dnorm{\hhoVecvars{v}_T}_{1,T}^2;
			\end{align}
			\item[\rm{(S3)}] {\rm Polynomial consistency.} For all $ \bm{w}\in\sqd{\mbbP^{k+1}(T)} $ and $ \hhoVecvars{v}_T\in\hhoVecvars{V}_T^k $, it holds
			\begin{align}\label{sec_model_eq22_01}
				s_T(\hhoVecvars{I}_T^k\bm{w},\hhoVecvars{v}_T) = 0.
			\end{align}
		\end{enumerate}
	\end{Lemma}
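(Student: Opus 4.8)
The plan is to verify the three properties (S1)–(S3) in turn, as each follows from the structure of the stabilization bilinear form $s_T$ in \eqref{sec_model_eq21_1} together with properties of the $L^2$-orthogonal projectors and the reconstruction operators established earlier in the excerpt.

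\textbf{Property (S1).} Symmetry of $s_T$ is immediate from \eqref{sec_model_eq21_1}, since it is a sum of $L^2$-inner products of the same linear operators applied to the two arguments; likewise positive semidefiniteness follows because $s_T(\hhoVecvars{v}_T,\hhoVecvars{v}_T) = h_T^{-2}\dnorm{\bdeltaT\hhoVecvars{v}_T}_T^2 + \sum_{F\in\mcalF_T}h_F^{-1}\dnorm{\bdeltaF\hhoVecvars{v}_T}_F^2 \geq 0$. For the coercivity half of (S2), I would first observe from \eqref{sec_model_eq21} that $a_T(\hhoVecvars{v}_T,\hhoVecvars{v}_T) = \dnorm{\nabla\bm{r}_T^{k+1}\hhoVecvars{v}_T}_T^2 + s_T(\hhoVecvars{v}_T,\hhoVecvars{v}_T)$, and the goal is to bound $\dnorm{\nabla\bm{v}_T}_T^2 + \snorm{\hhoVecvars{v}_T}_{1,\partial T}^2$ from above by this quantity. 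The definition \eqref{sec_model_eq17} of $\bm{r}_T^{k+1}$ with $\matvar{\tau}=\nabla\bm{v}_T$ gives control of $\dnorm{\nabla\bm{v}_T}_T$ in terms of $\dnorm{\nabla\bm{r}_T^{k+1}\hhoVecvars{v}_T}_T$ and the boundary differences $h_F^{-1/2}\dnorm{\bm{v}_F-\bm{v}_T}_F$; conversely, rewriting $\bm{v}_F-\bm{v}_T = (\bm{v}_F-\bm{r}_T^{k+1}\hhoVecvars{v}_T) + (\bm{r}_T^{k+1}\hhoVecvars{v}_T-\bm{v}_T)$ and inserting the projectors defining $\bdeltaT$, $\bdeltaF$ in \eqref{sec_model_eq19_1}, together with the discrete trace and inverse inequalities \eqref{sec_model_eq10}–\eqref{sec_model_eq12} and the comparison \eqref{sec_model_eq4_1}, lets me absorb the non-projected remainders. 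This is the standard HHO equivalence argument and is exactly the place where the specific scaling $h_T^{-2}$ on the element term and $h_F^{-1}$ on the face terms in \eqref{sec_model_eq21_1} is used.

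\textbf{Property (S3).} This is the cleanest part: for $\bm{w}\in\sqd{\mbbP^{k+1}(T)}$, I would use the first identity in \eqref{sec_model_eq19}, namely $\bm{r}_T^{k+1}\hhoVecvars{I}_T^k\bm{w} = \bm{\pi}_T^{1,k+1}\bm{w} = \bm{w}$ (the elliptic projector fixes polynomials of degree $k+1$, using also the mean-value condition), so that $\bm{r}_T^{k+1}\hhoVecvars{I}_T^k\bm{w} - (\hhoVecvars{I}_T^k\bm{w})_T = \bm{w} - \bm{\pi}_T^{0,k}\bm{w}$ and $\bm{r}_T^{k+1}\hhoVecvars{I}_T^k\bm{w} - (\hhoVecvars{I}_T^k\bm{w})_F = \bm{w} - \bm{\pi}_F^{0,k}\bm{w}$. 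Applying $\bm{\pi}_T^{0,k}$ (resp. $\bm{\pi}_F^{0,k}$) and using idempotency of the $L^2$-projector, both $\bdeltaT\hhoVecvars{I}_T^k\bm{w}$ and $\bdeltaF\hhoVecvars{I}_T^k\bm{w}$ vanish, hence $s_T(\hhoVecvars{I}_T^k\bm{w},\cdot)=0$.

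\textbf{Main obstacle.} The routine but delicate step is the coercivity bound in (S2): getting a lower bound on $\dnorm{\hhoVecvars{v}_T}_{1,T}^2$ requires showing that the full boundary seminorm $\snorm{\hhoVecvars{v}_T}_{1,\partial T}$ is controlled, and the subtlety is that $\bdeltaF$ in \eqref{sec_model_eq19_1} uses the raw reconstruction $\bm{r}_T^{k+1}\hhoVecvars{v}_T$ rather than a projected version, so one must carefully track which pieces are polynomials of which degree to apply \eqref{sec_model_eq12} and \eqref{sec_model_eq10} without losing powers of $h_T$. The upper (boundedness) bound is comparatively easy: expand $\dnorm{\nabla\bm{r}_T^{k+1}\hhoVecvars{v}_T}_T$ via \eqref{sec_model_eq17}, bound each term by Cauchy–Schwarz and the discrete trace inequality, and bound the two stabilization contributions by $\dnorm{\bm{r}_T^{k+1}\hhoVecvars{v}_T - \bm{v}_T}$-type quantities that are themselves controlled by $\dnorm{\hhoVecvars{v}_T}_{1,T}$ using boundedness of the projectors. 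Throughout, $\lambda$ must be taken independent of $h$, $T$, and $\nu$, which is automatic since $\nu$ has been factored out of $a_h$ in \eqref{sec_model_eq20}.
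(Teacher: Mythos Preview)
Your proposal is correct and follows essentially the same route as the paper. For (S1) and (S3) you do exactly what the paper does (the paper simply cites \cite[Lemma 2.11]{2020_hho_book} for (S3), which is your argument via $\bm{r}_T^{k+1}\hhoVecvars{I}_T^k\bm{w}=\bm{\pi}_T^{1,k+1}\bm{w}=\bm{w}$). For (S2) the paper organizes the coercivity step through the identity \eqref{sec_model_eq19_2}, setting $\hhoVecvars{y}_T:=\hhoVecvars{I}_T^k\bm{r}_T^{k+1}\hhoVecvars{v}_T-\hhoVecvars{v}_T=(\bdeltaT\hhoVecvars{v}_T,(\bdeltaF\hhoVecvars{v}_T)_{F\in\mcalF_T})$ and writing $\snorm{\hhoVecvars{v}_T}_{1,\partial T}\leq \snorm{\hhoVecvars{I}_T^k\bm{r}_T^{k+1}\hhoVecvars{v}_T}_{1,\partial T}+\snorm{\hhoVecvars{y}_T}_{1,\partial T}$; the first piece is controlled by the boundedness \eqref{sec_model_eq16} of $\hhoVecvars{I}_T^k$, and the second is exactly $s_T$ up to the discrete trace inequality. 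This is the same decomposition you describe, with your ``non-projected remainders'' packaged as $\snorm{\hhoVecvars{I}_T^k\bm{r}_T^{k+1}\hhoVecvars{v}_T}_{1,\partial T}$. The one ingredient your sketch of the upper bound does not name, and which the paper uses explicitly, is that $\bm{r}_T^{k+1}\hhoVecvars{v}_T-\bm{v}_T$ has zero mean on $T$ (by the closure condition in the definition of $\bm{r}_T^{k+1}$), so the local Poincar\'e--Wirtinger inequality yields $h_T^{-2}\dnorm{\bdeltaT\hhoVecvars{v}_T}_T^2\lesssim \dnorm{\nabla(\bm{r}_T^{k+1}\hhoVecvars{v}_T-\bm{v}_T)}_T^2$; ``boundedness of the projectors'' alone would not produce the correct $h_T$ scaling here.
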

	\begin{proof}
		Clearly, from the definition of $ s_T $, the property (S1) holds, and the property (S3) is a consequence of \citep[Lemma 2.11]{2020_hho_book}. It only remains to prove property (S2). In what follows of the rest proof, we let $ \hhoVecvars{v}_T $ be a generic element of $ \hhoVecvars{V}_T^k $, and denote by 
		\begin{align*}
			\bm{\check{v}}_T := \bm{r}_T^{k+1} \hhoVecvars{v}_T.
		\end{align*}
		
		We first directly give two bounds about $ \dnorm{\nabla \bm{v}_T}_T $ and $ \dnorm{\nabla\bm{\check{v}}_T}_T $, which have been proved in \citep[Proposition 2.13]{2020_hho_book}, as follows
		\begin{subequations}\label{sec_model_eq22_02}
			\begin{align}
				\label{sec_model_eq22_02a}
				\dnorm{\nabla \bm{v}_T}_T &\lesssim \dnorm{\nabla\bm{\check{v}}_T}_T + \snorm{\hhoVecvars{v}_T}_{1,\partial T}, \\
				\label{sec_model_eq22_02b}
				\dnorm{\nabla\bm{\check{v}}_T}_T &\lesssim \dnorm{\hhoVecvars{v}_T}_{1,T},
			\end{align}
		\end{subequations}
		where, as well as for the rest of this paper, we use the abbreviation $ a\lesssim b $ for the inequality $ a\leq Cb $ with generic positive constant $ C $ independent of mesh size $ h $ and viscosity $ \nu $. 
		
		And, invoking the boundedness \eqref{sec_model_eq16} of $ \hhoVecvars{I}_T^k $ with $ \bm{v}=\bm{\check{v}}_T $ to get 
		\begin{align}\label{sec_model_eq22_02_1}
			\snorm{\hhoVecvars{I}_T^k\bm{\check{v}}_T}_{1,\partial T} \leq \dnorm{\hhoVecvars{I}_T^k\bm{\check{v}}_T}_{1, T} \leq \dnorm{\nabla \bm{\check{v}}_T}_T.
		\end{align}
		
		Then, in the ensuing proof, we prove the property (S2). Using the definition \eqref{sec_model_eq21_1} of $ a_T $, we have 
		\begin{align}\label{sec_model_eq22_04}
			a_T(\hhoVecvars{v}_T,\hhoVecvars{v}_T) = \dnorm{\nabla \bm{\check{v}}_T}_T^2 + h_T^{-2}\dnorm{\bdeltaT\hhoVecvars{v}_T}_T^2 + \sum_{F\in\mcalF_T}h_T^{-1}\dnorm{\bdeltaF\hhoVecvars{v}_T}_F^2.
		\end{align}
		By using the relation \eqref{sec_model_eq4_1} and discrete trace inequality \eqref{sec_model_eq12}, it holds
		\begin{align}\label{sec_model_eq22_05}
			\sum_{F\in\mcalF_T}h_F^{-1}\dnorm{\bdeltaT\hhoVecvars{v}_T}_F^2 &\leq \sum_{F\in\mcalF_T} \frac{1}{2\varrho^2}h_T^{-1}\dnorm{\bdeltaT\hhoVecvars{v}_T}_F^2 \notag \\
			&\leq \sum_{F\in\mcalF_T} \frac{1}{2\varrho^2}h_T^{-2}\dnorm{\bdeltaT\hhoVecvars{v}_T}_T^2 \notag \\
			& \leq N_{\partial T} \frac{1}{2\varrho^2}h_T^{-2}\dnorm{\bdeltaT\hhoVecvars{v}_T}_T^2,
		\end{align}
		where $ N_{\partial T} $ denotes the number of faces in $ T $. 
		
		Let
		\begin{align}\label{sec_model_eq22_03}
			\hhoVecvars{y}_T := \hhoVecvars{I}_T^k\bm{\check{v}}_T - \hhoVecvars{v}_T = (\bdeltaT\hhoVecvars{v}_T, (\bdeltaF\hhoVecvars{v}_T)_{F\in\mcalF_T}).
		\end{align}
		On the one hand, from \eqref{sec_model_eq22_05} and note that $ \varrho\in (0,1) $, we get
		\begin{align}\label{sec_model_eq22_06}
			\snorm{\hhoVecvars{y}_T}_{1,\partial T}^2 &= \sum_{F\in\mcalF_T} h_F^{-1}\dnorm{\bdeltaF\hhoVecvars{v}_T - \bdeltaT\hhoVecvars{v}_T}_F^2 \notag \\
			&\leq  2\sum_{F\in\mcalF_T} h_F^{-1}\dnorm{\bdeltaF\hhoVecvars{v}_T}_F^2 + 2\sum_{F\in\mcalF_T} h_F^{-1}\dnorm{\bdeltaT\hhoVecvars{v}_T}_F^2 \notag \\
			&\leq N_{\partial T}\frac{2}{\varrho^2}\Big(\sum_{F\in\mcalF_T}h_F^{-1}\dnorm{\bdeltaF\hhoVecvars{v}_T}_F^2 + h_T^{-2}\dnorm{\bdeltaT \hhoVecvars{v}_T}_T^2 \Big).
		\end{align}
		On the other hand, collecting the above inequalities \eqref{sec_model_eq22_02_1} and \eqref{sec_model_eq22_03}$ - $\eqref{sec_model_eq22_06} leads to
		\begin{align*}
			\snorm{\hhoVecvars{v}_T}_{1,\partial T}^2 &= \snorm{\hhoVecvars{I}_T^k\bm{\check{v}}_T - \hhoVecvars{y}_T}_{1,\partial T}^2 \\
			&\leq 2\snorm{\hhoVecvars{I}_T^k\bm{\check{v}}_T}_{1,\partial T}^2 + 2\snorm{ \hhoVecvars{y}_T}_{1,\partial T}^2 \\
			& \lesssim \dnorm{\nabla \bm{\check{v}}_T}^2 + h_T^{-2}\dnorm{\bdeltaT\hhoVecvars{v}_T}_T^2 + \sum_{F\in\mcalF_T}h_F^{-1}\dnorm{\bdeltaF\hhoVecvars{v}_T}_F^2 = a_T(\hhoVecvars{v}_T,\hhoVecvars{v}_T).
		\end{align*}
		Combining this estimate and \eqref{sec_model_eq22_02a} yields
		\begin{align*}
			\dnorm{\hhoVecvars{v}_T}_{1,T}^2 = \dnorm{\nabla \bm{v}_T}_T^2 + \snorm{\hhoVecvars{v}_T}_{1,\partial T}^2 \lesssim \dnorm{\bm{\check{v}}_T}_T^2 + \snorm{\hhoVecvars{v}_T}_{1,\partial T}^2 \lesssim a_T(\hhoVecvars{v}_T,\hhoVecvars{v}_T),
		\end{align*}
		which gives the first estimate in \eqref{sec_model_eq22}, and the second will be proved in the following.
		
		We start from the estimation of $ \snorm{\hhoVecvars{y}_T}_{1,\partial T}^2 $,
		\begin{align}\label{sec_model_eq22_07}
			\snorm{\hhoVecvars{y}_T}_{1,\partial T}^2 \leq 2\snorm{\hhoVecvars{I}_T^k\bm{\check{v}}_T}_{1,\partial T}^2 + 2 \snorm{\hhoVecvars{v}_T}_{1,\partial T}^2 \lesssim \dnorm{\nabla \bm{\check{v}}_T}_T^2 + \snorm{\hhoVecvars{v}_T}_{1,\partial T}^2.
		\end{align}
		From the definition \eqref{sec_model_eq17} of $ \bm{r}_T^{k+1} $, we know that $ \int_T\bm{\pi}_T^{0,k}(\bm{\check{v}}_T-\bm{v}_T) = \int_T\bm{\check{v}}_T-\bm{v}_T = 0 $, then by using the local Poincar\'{e}-Wirtinger inequality and \eqref{sec_model_eq22_02a} leads to
		\begin{align}\label{sec_model_eq22_08}
			h_T^{-2}\dnorm{\bdeltaT\hhoVecvars{v}_T}_T^2 \leq h_T^{-2}(h_T^2\dnorm{ \nabla(\bm{\check{v}}_T - \bm{v}_T) }_T^2) \leq 2\dnorm{ \nabla\bm{\check{v}}_T }_T^2 + 2\dnorm{\nabla\bm{v}_T }_T^2 \lesssim \dnorm{\nabla\bm{\check{v}}_T}_T + \snorm{\hhoVecvars{v}_T}_{1,\partial T}.
		\end{align}
		Consequently, using the triangle inequality in \eqref{sec_model_eq22_04} and then plugging \eqref{sec_model_eq22_02b}, \eqref{sec_model_eq22_05} and \eqref{sec_model_eq22_07}$ - $\eqref{sec_model_eq22_08} into it gives
		\begin{align*}
			a_T(\hhoVecvars{v}_T,\hhoVecvars{v}_T) &\leq \dnorm{\nabla \bm{\check{v}}_T}_T^2 + h_T^{-2}\dnorm{\bdeltaT\hhoVecvars{v}_T}_T^2 + 2\sum_{F\in\mcalF_T}h_T^{-1}\dnorm{\bdeltaF\hhoVecvars{v}_T-\bdeltaT\hhoVecvars{v}_T}_F^2 + 2\sum_{F\in\mcalF_T}h_T^{-1}\dnorm{\bdeltaT\hhoVecvars{v}_T}_F^2\\
			&\lesssim \dnorm{\nabla \bm{\check{v}}_T}_T^2 + h_T^{-2}\dnorm{\bdeltaT\hhoVecvars{v}_T}_T^2 + \snorm{\hhoVecvars{y}_T}_{1,\partial T}^2 \\
			&\lesssim \dnorm{\nabla \bm{\check{v}}_T}_T^2 + \snorm{\hhoVecvars{v}_T}_{1,\partial T}^2 \\
			&\lesssim \dnorm{\hhoVecvars{v}_T}_{1,T}^2,
		\end{align*}
		which completes the proof of (S2).
		\qedhere
	\end{proof}

	Besides, for the bilinear form $ s_T $ satisfies (S1)$ - $(S3) has the following property.
	\begin{Lemma}[{\citep[Proposition 2.14]{2020_hho_book}}]
		Let $ T\in\mcalTh $, $ r\in\{-1, 0, ..., k\} $. For all $ \bm{v}\in\sqd{H^{r+2}(T)} $, 
		\begin{align}\label{sec_model_eq22_1}
			s_T(\hhoVecvars{I}_h^k\bm{v},\hhoVecvars{I}_h^k\bm{v})^{\frac{1}{2}}\lesssim h_T^{r+1}\snorm{\bm{v}}_{r+2,T}.
		\end{align}
	\end{Lemma}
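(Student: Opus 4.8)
The plan is to reduce the statement to the approximation properties of the elliptic projector recorded in Section \ref{sec_model_subsec2}, by exploiting the commutation identity \eqref{sec_model_eq19}. Since $s_T$ only involves the components of its arguments attached to $T$, it suffices to estimate $s_T(\hhoVecvars{v}_T,\hhoVecvars{v}_T)$ with $\hhoVecvars{v}_T:=\hhoVecvars{I}_T^k\bm{v}$. For this choice one has $\bm{v}_T=\bm{\pi}_T^{0,k}\bm{v}$ and $\bm{v}_F=\bm{\pi}_F^{0,k}\bm{v}$ by \eqref{sec_model_eq15}, and $\bm{r}_T^{k+1}\hhoVecvars{v}_T=\bm{\pi}_T^{1,k+1}\bm{v}$ by \eqref{sec_model_eq19}. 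Substituting these into the definitions \eqref{sec_model_eq19_1} and using the idempotency of the $L^2$-orthogonal projectors ($\bm{\pi}_T^{0,k}\bm{\pi}_T^{0,k}=\bm{\pi}_T^{0,k}$, $\bm{\pi}_F^{0,k}\bm{\pi}_F^{0,k}=\bm{\pi}_F^{0,k}$), I would first rewrite
\[
\bdeltaT\hhoVecvars{v}_T=\bm{\pi}_T^{0,k}\big(\bm{\pi}_T^{1,k+1}\bm{v}-\bm{v}\big),\qquad
\bdeltaF\hhoVecvars{v}_T=\bm{\pi}_F^{0,k}\big(\bm{\pi}_T^{1,k+1}\bm{v}-\bm{v}\big)\quad\forall F\in\mcalF_T .
\]

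Next, since $\bm{\pi}_T^{0,k}$ and $\bm{\pi}_F^{0,k}$ are contractions on $L^2(T)$ and $L^2(F)$ respectively, I would bound $\dnorm{\bdeltaT\hhoVecvars{v}_T}_T\le\dnorm{\bm{\pi}_T^{1,k+1}\bm{v}-\bm{v}}_T$ and $\dnorm{\bdeltaF\hhoVecvars{v}_T}_F\le\dnorm{\bm{\pi}_T^{1,k+1}\bm{v}-\bm{v}}_F$, and then apply the approximation estimates for the elliptic projector with $l=k+1$, Lebesgue exponent $2$, $s=r+2$ and $m=0$ — admissible precisely because $r\in\{-1,0,\dots,k\}$ ensures $1\le s\le k+2=l+1$. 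Estimate \eqref{sec_model_eq7} then gives $\dnorm{\bm{v}-\bm{\pi}_T^{1,k+1}\bm{v}}_T\lesssim h_T^{r+2}\snorm{\bm{v}}_{r+2,T}$, so $h_T^{-2}\dnorm{\bdeltaT\hhoVecvars{v}_T}_T^2\lesssim h_T^{2(r+1)}\snorm{\bm{v}}_{r+2,T}^2$, while the trace estimate \eqref{sec_model_eq8} gives $\dnorm{\bm{v}-\bm{\pi}_T^{1,k+1}\bm{v}}_F\lesssim h_T^{r+3/2}\snorm{\bm{v}}_{r+2,T}$; combining the latter with $h_F^{-1}\le(2\varrho^2)^{-1}h_T^{-1}$ from \eqref{sec_model_eq4_1} and the uniform bound on $N_{\partial T}$ afforded by mesh regularity yields $\sum_{F\in\mcalF_T}h_F^{-1}\dnorm{\bdeltaF\hhoVecvars{v}_T}_F^2\lesssim h_T^{2(r+1)}\snorm{\bm{v}}_{r+2,T}^2$. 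Adding the two contributions according to \eqref{sec_model_eq21_1} and taking square roots gives the claimed bound.

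I do not expect any genuine difficulty: the argument is a short chain of the commutation identity, $L^2$-stability of the projectors, and standard polynomial approximation (this is essentially \citep[Proposition 2.14]{2020_hho_book}). The only points requiring care are the bookkeeping of the projector compositions so that the interpolated quantities collapse to $\bm{\pi}_T^{1,k+1}\bm{v}-\bm{v}$, and checking that the Sobolev index $s=r+2$ lies in the admissible range of \eqref{sec_model_eq7}--\eqref{sec_model_eq8}; in the borderline case $r=-1$ one uses $\sqd{H^1(T)}\subset\sqd{W^{1,1}(T)}$ so that $\bm{\pi}_T^{1,k+1}\bm{v}$ is well defined, and all estimates reduce to the expected $O(1)$ power of $h_T$.
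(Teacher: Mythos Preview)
Your argument is correct. The paper itself does not supply a proof of this lemma; it simply cites \citep[Proposition 2.14]{2020_hho_book}, so there is nothing to compare against directly in the manuscript.

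It is worth noting a minor methodological difference from the cited reference. Proposition 2.14 in the HHO book is stated and proved for \emph{any} stabilisation $s_T$ satisfying the design conditions (S1)--(S3): one uses polynomial consistency (S3) to write $s_T(\hhoVecvars{I}_T^k\bm{v},\hhoVecvars{I}_T^k\bm{v})=s_T\big(\hhoVecvars{I}_T^k(\bm{v}-\bm{\pi}_T^{1,k+1}\bm{v}),\hhoVecvars{I}_T^k(\bm{v}-\bm{\pi}_T^{1,k+1}\bm{v})\big)$, then the boundedness part of (S2) together with \eqref{sec_model_eq16} to reduce to $\dnorm{\nabla(\bm{v}-\bm{\pi}_T^{1,k+1}\bm{v})}_T$, and finally the approximation estimate \eqref{sec_model_eq7} with $m=1$. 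Your route instead exploits the explicit structure \eqref{sec_model_eq21_1} of the specific stabiliser used here, which lets you bypass (S2)--(S3) entirely at the cost of being tied to this particular $s_T$. Both arguments are equally short; the book's version is reusable across stabilisers, yours is slightly more self-contained for this paper.
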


	\subsection{Pressure-velocity coupling}
	The pressure-velocity coupling hings on the bilinear form $ b_h $ on $ \hhoVecvars{V}_h^k\times \mbbP^k(\mcalTh) $ such that,
	\begin{align}\label{sec_model_eq23}
		b_h(\hhoVecvars{v}_h,q_h) := -\sum_{T\in\mcalT_h} (D_T^k\hhoVecvars{v}_T, q_T).
	\end{align}
	This bilinear form $ b_h $ enjoys the discrete inf-sup stability: there is a real number $ \widetilde{\beta} $ independent of $ h $, such that
	\begin{align}\label{sec_model_eq24}
		\widetilde{\beta}\leq \inf_{q_h\in Q_h\backslash\{0\}}\sup_{\hhoVecvars{v}_h\in\hhoVecvars{V}_{h,0}^k\backslash\{\hhoVecvars{0}\}} \frac{b_h(\hhoVecvars{v}_h, q_h)}{\dnorm{\hhoVecvars{v}_h}_{1,h}\dnorm{q_h}}.
	\end{align}
	
	\subsection{Discrete problem}
	The HHO scheme for the approximation of \eqref{sec_model_eq2} can be obtained by finding $ (\hhoVecvars{u}_h, p_h)\in \hhoVecvars{V}_{h,0}^k\times Q_h^k $ satisfying the following equations:
	\begin{subequations}\label{sec_model_eq25}
		\begin{align}
			\label{sec_model_eq25a}
			\nu a_h(\hhoVecvars{u}_h, \hhoVecvars{v}_h) + b_h(\hhoVecvars{v}_h, p_h) &= (\bm{f}, \bm{v}_h) \quad \forall \hhoVecvars{v}_h\in\hhoVecvars{V}_{h,0}^k, \\
			\label{sec_model_eq25b}
			-b_h(\hhoVecvars{u}_h, q_h) &= 0 \hspace{3.5em} \forall q_h \in Q_h^k. 
		\end{align}
	\end{subequations}
	Combining \eqref{sec_model_eq22} and \eqref{sec_model_eq24} with the saddle point theorem, we know that the discrete problem \eqref{sec_model_eq25} is well-posed. In addition, define the global reconstruction operator $ \bm{r}_h^{k+1}: \hhoVecvars{V}_h^k\rightarrow\sqd{\mbbP^{k+1}(\mcalTh)} $ such that, for all $ \hhoVecvars{v}_h\in\hhoVecvars{V}_h^k $, 
	\begin{align*}
		(\bm{r}_h^{k+1}\hhoVecvars{v}_h)_{|T} := \bm{r}_T^{k+1}\hhoVecvars{v}_T  \quad \forall T\in\mcalTh,
	\end{align*}
	also, for the sake of brevity, denote by 
	\begin{align*}
		s_h(\hhoVecvars{v}_h,\hhoVecvars{v}_h) := \sum_{T\in\mcalTh} s_T(\hhoVecvars{v}_T,\hhoVecvars{v}_T).
	\end{align*}

	\iffalse 
	In addition, for all $ \bm{v}\in \sqd{H^1(\Omega)} $ and $ \hhoVecvars{v}_h\in \hhoVecvars{V}_{h}^k $, we define the following seminorm 
	\begin{align}\label{sec_model_eq26}
		\dnorm{\hhoVecvars{v}_h}^2 := \sum_{T\in\mcalT_h} \dnorm{\hhoVecvars{v}_T}_{vel,T}^2, \quad  \dnorm{\hhoVecvars{v}_T}_{vel,T}^2 := \dnorm{\nabla \bm{v} - \nabla\bm{r}_T^{k+1}\hhoVecvars{v}_T}_T^2 + s_T(\hhoVecvars{v}_T,\hhoVecvars{v}_T).
	\end{align} 
	\fi

	Along with the reference \citep[Theorem 8.18, Theorem 8.20]{2020_hho_book}, the following lemma, for the a priori error estimate, holds true,
	\begin{Lemma}\label{sec_model_lem2}
		Let polynomial degree $ k\geq 0 $ be fixed and $ (\bm{u}, p)\in\bm{V}\times Q $ denote the unique solution to the continuous problem \eqref{sec_model_eq2}, for which we assume the additional regularity $ \bm{u}\in \sqd{H^{r+2}(\mcalTh)} $ and $ p\in H^1(\Omega)\cap H^{r+1}(\mcalTh) $ for some $ r\in\{0,...,k\} $. For all $ h\in\mathcal{H} $, let $ (\hhoVecvars{u}_h, p_h)\in\hhoVecvars{V}_{h,0}^k\times Q_h^k $ denote the unique solution to the discrete problem \eqref{sec_model_eq25}. Define the discretization errors $ \bm{e_u} $ and $ e_p $ as follows
		\begin{subequations}\label{sec_model_eq26_1}
			\begin{align}
				&\bm{e_u}^2 := \sum_{T\in\mcalT_h} \bm{e}_{\bm{u},T}^2, \quad  \bm{e}_{\bm{u},T}^2:= \nu\dnorm{\nabla\bm{u}-\nabla\bm{r}_T^{k+1}\hhoVecvars{u}_T}_T^2 + \nu s_T(\hhoVecvars{u}_T,\hhoVecvars{u}_T), \\
				&e_p^2 := \sum_{T\in\mcalT_h} e_{p,T}^2, \quad e_{p,T}^2:= \nu^{-1}\dnorm{p - p_T}_T^2.
			\end{align}
		\end{subequations}
		Then, 
		\begin{align}\label{sec_model_eq27}
			\big(\bm{e_u}^2 + e_p^2\big)^{\frac{1}{2}} \lesssim h^{r+1}\big( \nu^{\frac{1}{2}}\snorm{\bm{u}}_{r+2,\mcalTh} + \nu^{-\frac{1}{2}}\snorm{p}_{r+1,\mcalTh} \big).
		\end{align}
		\iffalse
		Furthermore, under the elliptic regular assumption (cf. \citep[Remark 8.19]{2020_hho_book} for details), and assuming that $ \bm{f}\in\sqd{H^1(\mcalTh)} $ if $ k=0 $, we have the following improved $ L^2 $-error estimates for the velocity:
		\begin{equation}
			\dnorm{\bm{u}-\bm{r}_h^{k+1}\hhoVecvars{u}_h} \lesssim \left\{ 
			\begin{aligned}
				&h^2 \sum_{T\in\mcalTh}\dnorm{\bm{f}}_{1,T}  &\text{if } k = 0, \\
				&h^{r+2}\sum_{T\in\mcalTh}\big(\snorm{\bm{u}}_{r+2,T} + \snorm{p}_{r+1,T}\big) &\hspace{1em}\text{if } k \geq 1.
			\end{aligned}
			\right.
		\end{equation}
		\fi 
	\end{Lemma}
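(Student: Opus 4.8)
The plan is to run the classical third-Strang-lemma argument for saddle-point problems, comparing $(\hhoVecvars{u}_h,p_h)$ with the interpolant of the exact pair. I set $\hhoVecvars{\hat{u}}_h:=\hhoVecvars{I}_h^k\bm{u}\in\hhoVecvars{V}_{h,0}^k$ (admissible since $\bm{u}\in\bm{V}$) and $\hat{p}_h:=\pi_h^{0,k}p$, which belongs to $Q_h^k$ because $\int_\Omega\hat{p}_h=\int_\Omega p=0$. The first point is that the interpolant is \emph{exactly} divergence-free at the discrete level: by the commuting relation \eqref{sec_model_eq19} and \eqref{sec_model_eq1b}, $D_T^k\hhoVecvars{I}_T^k\bm{u}=\pi_T^{0,k}(\nabla\cdot\bm{u})=0$, hence $b_h(\hhoVecvars{\hat{u}}_h,q_h)=0$ for all $q_h\in Q_h^k$. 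Subtracting $b_h(\hhoVecvars{\hat{u}}_h,q_h)=0$ from \eqref{sec_model_eq25b} and subtracting $\nu a_h(\hhoVecvars{\hat{u}}_h,\hhoVecvars{v}_h)+b_h(\hhoVecvars{v}_h,\hat{p}_h)$ from both sides of \eqref{sec_model_eq25a}, I obtain, for all $\hhoVecvars{v}_h\in\hhoVecvars{V}_{h,0}^k$ and $q_h\in Q_h^k$,
\begin{align*}
\nu a_h(\hhoVecvars{u}_h-\hhoVecvars{\hat{u}}_h,\hhoVecvars{v}_h)+b_h(\hhoVecvars{v}_h,p_h-\hat{p}_h)=\mathcal{E}_h(\hhoVecvars{v}_h),\qquad b_h(\hhoVecvars{u}_h-\hhoVecvars{\hat{u}}_h,q_h)=0,
\end{align*}
where $\mathcal{E}_h(\hhoVecvars{v}_h):=(\bm{f},\bm{v}_h)-\nu a_h(\hhoVecvars{\hat{u}}_h,\hhoVecvars{v}_h)-b_h(\hhoVecvars{v}_h,\hat{p}_h)$ is the consistency error. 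Choosing $\hhoVecvars{v}_h=\hhoVecvars{u}_h-\hhoVecvars{\hat{u}}_h$ kills the $b_h$-term by the second identity, so the coercivity in \eqref{sec_model_eq22} gives $\nu\dnorm{\hhoVecvars{u}_h-\hhoVecvars{\hat{u}}_h}_{1,h}\lesssim\dnorm{\mathcal{E}_h}_\ast$ with $\dnorm{\mathcal{E}_h}_\ast:=\sup_{\hhoVecvars{v}_h\in\hhoVecvars{V}_{h,0}^k\backslash\{\hhoVecvars{0}\}}|\mathcal{E}_h(\hhoVecvars{v}_h)|/\dnorm{\hhoVecvars{v}_h}_{1,h}$; the discrete inf-sup condition \eqref{sec_model_eq24}, together with the boundedness half of \eqref{sec_model_eq22}, then yields $\dnorm{p_h-\hat{p}_h}\lesssim\dnorm{\mathcal{E}_h}_\ast$.

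Everything thus reduces to showing $\dnorm{\mathcal{E}_h}_\ast\lesssim h^{r+1}\big(\nu\snorm{\bm{u}}_{r+2,\mcalTh}+\snorm{p}_{r+1,\mcalTh}\big)$. Because $p\in H^1(\Omega)$, the momentum equation \eqref{sec_model_eq1a} gives $\Delta\bm{u}\in\sqd{L^2(\Omega)}$ — so $\bm{f}=-\nu\Delta\bm{u}+\nabla p$ and $\nabla\bm{u}$ lies in the matrix space $\bm{H}({\rm div};\Omega)$ of \eqref{sec_model_eq1_add1}, hence has a single-valued normal trace across interfaces. I then split $\mathcal{E}_h(\hhoVecvars{v}_h)=-\nu\big[(\Delta\bm{u},\bm{v}_h)+a_h(\hhoVecvars{\hat{u}}_h,\hhoVecvars{v}_h)\big]+\big[(\nabla p,\bm{v}_h)-b_h(\hhoVecvars{v}_h,\hat{p}_h)\big]$ and treat the two brackets separately. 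For the \emph{viscous} bracket, \eqref{sec_model_eq19} turns $\nabla\bm{r}_T^{k+1}\hhoVecvars{\hat{u}}_T$ into $\nabla\bm{\pi}_T^{1,k+1}\bm{u}$; using this tensor as the test function in the definition \eqref{sec_model_eq17} of $\bm{r}_T^{k+1}$, integrating $(\Delta\bm{u},\bm{v}_h)$ by parts elementwise, and exploiting that the normal traces of $\nabla\bm{u}$ cancel across interfaces while $\bm{v}_F=\bm{0}$ on $\partial\Omega$, the viscous bracket collapses to
\begin{align*}
\sum_{T\in\mcalTh}\sum_{F\in\mcalF_T}\big(\bm{v}_T-\bm{v}_F,\;\nabla(\bm{u}-\bm{\pi}_T^{1,k+1}\bm{u})\,\bm{n}_{TF}\big)_F+s_h(\hhoVecvars{\hat{u}}_h,\hhoVecvars{v}_h);
\end{align*}
the face sum is bounded by a Cauchy--Schwarz step pairing $h_F^{-1/2}\dnorm{\bm{v}_T-\bm{v}_F}_F$ (which sums to $\lesssim\dnorm{\hhoVecvars{v}_h}_{1,h}$) against $h_F^{1/2}\dnorm{\nabla(\bm{u}-\bm{\pi}_T^{1,k+1}\bm{u})}_F\lesssim h_T^{r+1}\snorm{\bm{u}}_{r+2,T}$, the latter from the continuous trace inequality \eqref{sec_model_eq11} and the elliptic-projection estimate \eqref{sec_model_eq7}; the stabilization term is bounded by Cauchy--Schwarz in $s_h$, the approximation estimate \eqref{sec_model_eq22_1}, and $s_T(\hhoVecvars{v}_T,\hhoVecvars{v}_T)\le a_T(\hhoVecvars{v}_T,\hhoVecvars{v}_T)\le\lambda^{-1}\dnorm{\hhoVecvars{v}_T}_{1,T}^2$ from \eqref{sec_model_eq22}. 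For the \emph{pressure} bracket, expanding $b_h(\hhoVecvars{v}_h,\hat{p}_h)$ via the definition \eqref{sec_model_eq18} of $D_T^k$ (tested against the polynomial $\pi_T^{0,k}p$), integrating $(\nabla p,\bm{v}_h)$ by parts, and using that $p$ and $\bm{v}_F$ are single-valued with $\bm{v}_F=\bm{0}$ on $\partial\Omega$ (so $\sum_T\sum_{F\in\mcalF_T}(p,\bm{v}_F\cdot\bm{n}_{TF})_F=0$), everything reduces to $\sum_T\sum_{F\in\mcalF_T}(p-\pi_T^{0,k}p,(\bm{v}_T-\bm{v}_F)\cdot\bm{n}_{TF})_F$, which Cauchy--Schwarz and the face approximation estimate \eqref{sec_model_eq9} bound by $\lesssim h^{r+1}\snorm{p}_{r+1,\mcalTh}\dnorm{\hhoVecvars{v}_h}_{1,h}$.

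It remains to assemble the claim by the triangle inequality. On each $T$, writing $\nabla\bm{u}-\nabla\bm{r}_T^{k+1}\hhoVecvars{u}_T=\nabla(\bm{u}-\bm{\pi}_T^{1,k+1}\bm{u})+\nabla\bm{r}_T^{k+1}(\hhoVecvars{\hat{u}}_T-\hhoVecvars{u}_T)$ and bounding the two terms by \eqref{sec_model_eq7} and \eqref{sec_model_eq22_02b}, and similarly $s_T(\hhoVecvars{u}_T,\hhoVecvars{u}_T)^{1/2}\le s_T(\hhoVecvars{\hat{u}}_T,\hhoVecvars{\hat{u}}_T)^{1/2}+s_T(\hhoVecvars{\hat{u}}_T-\hhoVecvars{u}_T,\hhoVecvars{\hat{u}}_T-\hhoVecvars{u}_T)^{1/2}$ controlled by \eqref{sec_model_eq22_1} and \eqref{sec_model_eq22}, and invoking the bound on $\dnorm{\hhoVecvars{u}_h-\hhoVecvars{\hat{u}}_h}_{1,h}$, I get $\bm{e_u}\lesssim h^{r+1}\big(\nu^{1/2}\snorm{\bm{u}}_{r+2,\mcalTh}+\nu^{-1/2}\snorm{p}_{r+1,\mcalTh}\big)$. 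For the pressure, $p-p_T=(p-\pi_T^{0,k}p)+(\hat{p}_{h|T}-p_{h|T})$, so \eqref{sec_model_eq7} and the bound on $\dnorm{p_h-\hat{p}_h}$ give the matching estimate for $e_p$; adding the two gives \eqref{sec_model_eq27}. I expect the main obstacle to be the viscous bracket of the consistency estimate: the interface bookkeeping must be carried out carefully — the cancellation of normal fluxes rests on the $\bm{H}({\rm div})$-regularity of $\nabla\bm{u}$, itself a consequence of the momentum equation — and one must track the powers of $h_T$ through the trace inequalities so that precisely order $h^{r+1}$ survives; everything else is a routine combination of the projection estimates and the stabilization properties (S1)--(S3). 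This essentially reproduces, in the present notation and with the stabilization \eqref{sec_model_eq21_1}, the argument of \citep[Theorems 8.18 and 8.20]{2020_hho_book}.
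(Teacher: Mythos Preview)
Your proposal is correct and follows essentially the same route as the paper. The only structural difference is that the paper invokes \citep[Theorem 8.18]{2020_hho_book} as a black box to obtain the bound on $\nu\dnorm{\hhoVecvars{u}_h-\hhoVecvars{I}_h^k\bm{u}}_{1,h}^2+\nu^{-1}\dnorm{p_h-\pi_h^{0,k}p}^2$ (merely observing that the cited proof goes through for any stabilization satisfying (S1)--(S3)), whereas you spell out that argument in full via the consistency-error estimate; the subsequent triangle-inequality step splitting $\bm{e_u}$ and $e_p$ through the interpolant is identical in both proofs.
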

	\begin{proof}
		We recall the conclusion of \citep[Theorem 8.18]{2020_hho_book}:
		\begin{align}\label{sec_model_eq28}
			\nu\dnorm{\hhoVecvars{u}_h-\hhoVecvars{I}_h^k\bm{u}}_{1,h}^2 + \nu^{-1}\dnorm{p_h-\pi_h^{0,k}p}^2 \lesssim h^{r+1}\big( \nu^{\frac{1}{2}}\snorm{\bm{u}}_{r+2,\mcalTh} + \nu^{-\frac{1}{2}}\snorm{p}_{r+1,\mcalTh} \big),
		\end{align}
		and note that the proof of the estimation \eqref{sec_model_eq28} holds for all $ s_T $ satisfying the properties in Lemma \ref{sec_model_lem1}.
		
		To estimate the error $ \bm{e_u} $, we insert $ \pm\nabla_h\bm{r}_h^{k+1}\hhoVecvars{I}_h^k\bm{u} $ into the first term, $ \pm \hhoVecvars{I}_h^k\bm{u} $ into the second, and using the consistency (S3) of $ s_T $ and the triangle inequality, we get
		\begin{align*}
			&\dnorm{\nabla \bm{u} - \nabla_h\bm{r}_h^{k+1}\hhoVecvars{u}_h} + s_h(\hhoVecvars{u}_h,\hhoVecvars{u}_h)^{\frac{1}{2}}\\
			& \leq \Big( \dnorm{\nabla_h\bm{r}_h^{k+1}(\hhoVecvars{u}_h-\hhoVecvars{I}_h^k\bm{u})} + s_h(\hhoVecvars{u}_h-\hhoVecvars{I}_h^k\bm{u},\hhoVecvars{u}_h-\hhoVecvars{I}_h^k\bm{u})^{\frac{1}{2}} \Big) + \Big( \dnorm{\nabla_h (\bm{r}_h^{k+1}\hhoVecvars{I}_h^k\bm{u}-\bm{u})} + s_h(\hhoVecvars{I}_h^k\bm{u},\hhoVecvars{I}_h^k\bm{u})^{\frac{1}{2}} \Big)\\
			&=: \mathfrak{T}_1 + \mathfrak{T}_2.
		\end{align*}
		Along with the stability and boundedness (S2) of $ a_T $, by adding all the elements together, we have 
		\begin{align*}
			\nu^{\frac{1}{2}}\mathfrak{T}_1\lesssim \nu^{\frac{1}{2}}\dnorm{\hhoVecvars{u}_h-\hhoVecvars{I}_h^k\bm{u}}_{1,h},
		\end{align*}
		on the other hand, using \eqref{sec_model_eq19} and \eqref{sec_model_eq22_1} yields
		\begin{align*}
			\nu^{\frac{1}{2}}\mathfrak{T}_2\lesssim \nu^{\frac{1}{2}} h^{r+2}\snorm{\bm{u}}_{r+2,\mcalTh}.
		\end{align*}
		Then, from the triangle inequality and the property of projection \eqref{sec_model_eq7}, we obtain
		\begin{align*}
			\nu^{-\frac{1}{2}}\dnorm{p - p_h} &\leq \nu^{-\frac{1}{2}}\dnorm{p - \pi_h^{0,k}p} + \nu^{-\frac{1}{2}}\dnorm{p_h - \pi_h^{0,k}p} \\ 
			&\leq h^{r+1}\nu^{-\frac{1}{2}}\snorm{p}_{r+1,\mcalTh} + \nu^{-\frac{1}{2}}\dnorm{p_h - \pi_h^{0,k}p}.
		\end{align*}
		
		The result then follows by combining the individual bounds above with \eqref{sec_model_eq28}.
		\qedhere
	\end{proof}

	\section{A Posteriori Error Analysis for HHO}\label{sec_posterioriError}
	In this section, an a residual-type posteriori error estimator will be presented and analyzed for the discrete scheme \eqref{sec_model_eq25}.
	
	We first introduce the $ \bcurl $ operator for the vector functions $ \bm{v}=(v_1, \cdots, v_d)^\transpose $ (where $ \transpose $ denotes the transpose), such that, for $ d=2 $, 
	\begin{align*}
		\bcurl \bm{v} = \begin{pmatrix}
			-\partial_{x_2}v_1 \  \  \partial_{x_1}v_1 \\
			-\partial_{x_2}v_2 \  \  \partial_{x_1}v_2
		\end{pmatrix},
	\end{align*}
	and for $ d=3 $,
	\begin{align*}
		\bcurl \bm{v} = \begin{pmatrix}
			\partial_{x_2}v_1  - \partial_{x_3}v_1  \  \  \partial_{x_3}v_1  - \partial_{x_1}v_1  \ \ \partial_{x_1}v_1  - \partial_{x_2}v_1  \\
			\partial_{x_2}v_2  - \partial_{x_3}v_2  \  \  \partial_{x_3}v_2  - \partial_{x_1}v_2  \ \ \partial_{x_1}v_2  - \partial_{x_2}v_2  \\
			\partial_{x_2}v_3  - \partial_{x_3}v_3  \  \  \partial_{x_3}v_3  - \partial_{x_1}v_3  \ \ \partial_{x_1}v_3  - \partial_{x_2}v_3 
		\end{pmatrix}.
	\end{align*}
	
	Given an approximation $ (\hhoVecvars{u}_h, p_h)\in \hhoVecvars{V}_h^k\times Q_h^k $, we introduce the a posteriori error estimator $ \eta_h $ as follows,
	\begin{align}\label{sec_post_eq2}
		\eta_h^2 := \eta_{d,h}^2 + \eta_{s,h}^2 + \eta_{J,h}^2,
	\end{align}
	where
	\begin{align*}
		\eta_{d,h}^2 &:= \sum_{T\in\mcalT_h}\eta_{d,T}^2, \quad  \eta_{d,T}^2:= \nu \dnorm{\nabla\cdot\bm{r}_T^{k+1}\hhoVecvars{u}_T}_T^2, \\
		\eta_{s,h}^2 &:= \sum_{T\in\mcalT_h}\eta_{s,T}^2, \quad \eta_{s,T}^2:= \nu s_T(\hhoVecvars{u}_T, \hhoVecvars{u}_T), \\
		\eta_{J,h}^2 &:= \sum_{T\in\mcalT_h}\eta_{J,T}^2, \quad  \eta_{J,T}^2:= \nu \sum_{F\in\mcalF_T} h_F^{-1} \dnorm{\jump{\bm{r}_h^{k+1}\hhoVecvars{u}_h}_F}_F^2.
	\end{align*}
	We also identify the data oscillation for the body force function $ \bm{f} $ on $ \mcalTh $ as
	\begin{align}\label{sec_post_eq3}
		{\rm osc}^2(\bm{f},\mcalTh) := \sum_{T\in\mcalTh} {\rm osc}^2(\bm{f},T), \quad {\rm osc}^2(\bm{f},T) := \nu^{-1} h_T^2\dnorm{\bm{f}-\bm{\pi}_T^{0,k}\bm{f}}_T^2.
	\end{align}

	The main goal of the following content is to give the a posteriori error estimations with the estimator $ \eta_h $ defined in \eqref{sec_post_eq2}. Let $ (\bm{u}, p) $ and $ (\hhoVecvars{u}_h, p_h) $ be the solutions of weak form \eqref{sec_model_eq2} and discrete scheme \eqref{sec_model_eq25}, respectively, the below Helmholtz decomposition has a crucial role in our upper bound analysis.
	
	\begin{Lemma}[{Helmholtz decomposition. \citep[Lemma 3.2]{1995_MC_nonc_Stokes}}]\label{sec_post_le1}
		For $ \nabla\bm{u} - \nabla_h\bm{r}_h^{k+1}\hhoVecvars{u}_h\in [L^2(\Omega)]^{d\times d} $, there exists $ \bm{z}\in\sqd{H^1_0(\Omega)} $ satisfying $ \nabla\cdot \bm{z}=0 $, $ q\in L_0^2(\Omega) $, and $ \bm{s}\in [H^1(\Omega)\cap L_0^2(\Omega)]^d $, such that
		\begin{align}\label{sec_post_eq4}
			\nabla\bm{u} - \nabla_h\bm{r}_h^{k+1}\hhoVecvars{u}_h = \nabla \bm{z} - \nu^{-1}q\identityM + \nu^{-1}\bcurl \bm{s}
		\end{align}
		(where $ \identityM\in \mathbb{R}^{d\times d} $ is the identity matrix) and that
		\begin{align}\label{sec_post_eq5}
			\nu\dnorm{\nabla \bm{z}} + \dnorm{q} + \dnorm{\nabla \bm{s}} \leq \nu\dnorm{\nabla\bm{u} - \nabla_h\bm{r}_h^{k+1}\hhoVecvars{u}_h}.
		\end{align}
	\end{Lemma}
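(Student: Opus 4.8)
The plan is to build the three components of \eqref{sec_post_eq4} successively: first extract the solenoidal gradient $\nabla\bm{z}$ together with the pressure $q$ by solving an auxiliary continuous Stokes problem whose datum is $\bm{G}:=\nabla\bm{u}-\nabla_h\bm{r}_h^{k+1}\hhoVecvars{u}_h\in[L^2(\Omega)]^{d\times d}$, and then identify the remainder as $\nu^{-1}\bcurl\bm{s}$ using the fact that a matrix field with divergence-free rows on a simply connected domain is a $\bcurl$. Concretely, I would seek $(\bm{z},\widetilde{p})\in\bm{V}\times Q$ such that
\begin{align*}
(\nabla\bm{z},\nabla\bm{v})+b(\bm{v},\widetilde{p}) &= (\bm{G},\nabla\bm{v}) \quad \forall\, \bm{v}\in\bm{V},\\
-b(\bm{z},r) &= 0 \quad \forall\, r\in Q,
\end{align*}
with $b$ as in \eqref{sec_model_eq3}. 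Since $\bm{v}\mapsto(\bm{G},\nabla\bm{v})$ is a bounded linear form on $\bm{V}$, the coercivity of $(\nabla\cdot,\nabla\cdot)$ on $\bm{V}$, the continuous inf-sup condition \eqref{sec_model_eq4} and the saddle-point theory \citep[Theorem 4.6]{2016_JohnVolker_book} give a unique solution, and the second equation yields $\nabla\cdot\bm{z}=0$. Setting $q:=\nu\widetilde{p}\in L_0^2(\Omega)$ and using $\nabla\cdot\bm{v}=\identityM:\nabla\bm{v}$, the first equation rearranges to $(\bm{G}-\nabla\bm{z}+\nu^{-1}q\,\identityM,\nabla\bm{v})=0$ for all $\bm{v}\in\bm{V}$.

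Next I would put $\bm{K}:=\nu\bigl(\bm{G}-\nabla\bm{z}+\nu^{-1}q\,\identityM\bigr)$. Choosing $\bm{v}=\varphi\bm{e}_i$ with arbitrary $\varphi\in C_c^\infty(\Omega)$ in the identity above shows that each row of $\bm{K}$ is divergence-free in the distributional sense; since $\Omega$ is bounded and simply connected, classical potential theory then yields $\bm{s}=(s_1,\dots,s_d)\in[H^1(\Omega)\cap L_0^2(\Omega)]^d$ with $\bcurl\bm{s}=\bm{K}$---for $d=2$ each $s_i$ is the zero-mean scalar stream function of the $i$-th row of $\bm{K}$, and the case $d=3$ is handled analogously with the row/vector potential chosen consistently with the $\bcurl$ operator introduced in this section. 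Together with the definition of $\bm{K}$, this is precisely the decomposition \eqref{sec_post_eq4}.

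For the stability bound \eqref{sec_post_eq5}, the key observation is the (essentially) $L^2$-orthogonality of the three pieces: $(\nabla\bm{z},q\,\identityM)=(q,\nabla\cdot\bm{z})=0$ because $\nabla\cdot\bm{z}=0$, and $(\nabla\bm{z},\bcurl\bm{s})=0$ by integration by parts, since $\bm{z}\in[H_0^1(\Omega)]^d$ and each row of $\bcurl\bm{s}$ is divergence-free. Testing the auxiliary problem with $\bm{v}=\bm{z}$ gives $\dnorm{\nabla\bm{z}}^2=(\bm{G},\nabla\bm{z})\le\dnorm{\bm{G}}\,\dnorm{\nabla\bm{z}}$, hence $\nu\dnorm{\nabla\bm{z}}\le\nu\dnorm{\bm{G}}$; the inf-sup condition \eqref{sec_model_eq4} bounds $\dnorm{\widetilde{p}}$, and therefore $\dnorm{q}=\nu\dnorm{\widetilde{p}}$, by a constant multiple of $\dnorm{\nabla\bm{z}}+\dnorm{\bm{G}}\lesssim\dnorm{\bm{G}}$; finally $\dnorm{\nabla\bm{s}}$ is controlled through its relation to $\dnorm{\bcurl\bm{s}}=\dnorm{\bm{K}}$ (an identity when $d=2$), while $\dnorm{\bm{K}}$ is bounded, via the triangle inequality, in terms of $\dnorm{\bm{G}}$, $\dnorm{\nabla\bm{z}}$ and $\dnorm{q}$. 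Adding the three estimates yields $\nu\dnorm{\nabla\bm{z}}+\dnorm{q}+\dnorm{\nabla\bm{s}}\lesssim\nu\dnorm{\bm{G}}$.

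I expect the two delicate points to be the following. First, the construction of the potential $\bm{s}$ with the stated regularity and a gauge fixing that keeps $\dnorm{\nabla\bm{s}}$ controllable: this is where the simple connectedness of $\Omega$ and the precise algebraic form of $\bcurl$---in particular in three space dimensions---genuinely enter. Second, pinning down the leading constant in \eqref{sec_post_eq5}: under the exact orthogonality relations above together with the norm identity $\dnorm{\bcurl\bm{s}}=\dnorm{\nabla\bm{s}}$ the sum collapses to the stated inequality, whereas in general the argument delivers a constant depending only on $\Omega$ (through $\beta$) and on $d$; for the a posteriori estimates that follow, either version is sufficient.
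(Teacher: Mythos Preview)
The paper does not prove this lemma; it is quoted directly from \cite[Lemma~3.2]{1995_MC_nonc_Stokes} and only used as a black box in the subsequent upper-bound analysis. Your construction---solving an auxiliary Stokes problem for $(\bm{z},q)$ and then invoking a row-wise stream-function/vector-potential argument on the divergence-free remainder to obtain $\bm{s}$---is precisely the route taken in that reference, so your proposal is correct and aligned with the cited source.

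Your closing caveat about the constant in \eqref{sec_post_eq5} is well placed: the three pieces are not fully mutually $L^2$-orthogonal (in particular $(q\identityM,\bcurl\bm{s})$ need not vanish), and the bound on $\dnorm{q}$ through the inf-sup condition \eqref{sec_model_eq4} introduces a factor $\beta^{-1}$, so what one actually obtains is $\nu\dnorm{\nabla\bm{z}}+\dnorm{q}+\dnorm{\nabla\bm{s}}\le C\,\nu\dnorm{\bm{G}}$ with $C$ depending on $\Omega$. The paper's statement with constant~$1$ should be read in this sense; since every downstream use of the lemma is via $\lesssim$, this does not affect any of the a~posteriori estimates.
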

	
	As a result, the lemmas and theorem below, give the upper bound of the discretization errors.
	\begin{Lemma}\label{sec_post_le5}
		Let $ \bm{u} $ and $ \hhoVecvars{u}_h $ denote the unique velocity solutions to problems \eqref{sec_model_eq2} and \eqref{sec_model_eq25} and, $ \bm{e_u} $ be the error defined in \eqref{sec_model_eq26_1}. The following upper bound estimate holds true,
		\begin{align}
			\bm{e_u}^2 \lesssim \eta_h^2 + {\rm osc}^2(\bm{f},\mcalTh), 
		\end{align}
		with hidden constant independent of both the meshsize and the problem data.
	\end{Lemma}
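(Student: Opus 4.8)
The plan is to exploit the Helmholtz decomposition of Lemma~\ref{sec_post_le1} to split the velocity error into three mutually controllable pieces. Writing $\nabla\bm{u} - \nabla_h\bm{r}_h^{k+1}\hhoVecvars{u}_h = \nabla\bm{z} - \nu^{-1}q\identityM + \nu^{-1}\bcurl\bm{s}$ and testing this identity against $\nabla\bm{u} - \nabla_h\bm{r}_h^{k+1}\hhoVecvars{u}_h$ itself in $[L^2(\Omega)]^{d\times d}$, I would obtain
\begin{align*}
	\nu\dnorm{\nabla\bm{u} - \nabla_h\bm{r}_h^{k+1}\hhoVecvars{u}_h}^2 = \nu(\nabla\bm{z}, \nabla\bm{u} - \nabla_h\bm{r}_h^{k+1}\hhoVecvars{u}_h) - (q\identityM, \nabla\bm{u} - \nabla_h\bm{r}_h^{k+1}\hhoVecvars{u}_h) + (\bcurl\bm{s}, \nabla\bm{u} - \nabla_h\bm{r}_h^{k+1}\hhoVecvars{u}_h).
\end{align*}
The middle term involves $(q\identityM, \nabla_h\bm{r}_h^{k+1}\hhoVecvars{u}_h) = (q, \nabla_h\cdot\bm{r}_h^{k+1}\hhoVecvars{u}_h)$ (since $\nabla\cdot\bm{u}=0$ and $q$ has zero mean, the $\bm{u}$ contribution drops), which is bounded by $\dnorm{q}\,\eta_{d,h}$; combined with \eqref{sec_post_eq5} this contributes the divergence estimator $\eta_{d,h}$. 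The $\bcurl\bm{s}$ term, after integration by parts face-by-face and using that $\bcurl\bm{s}$ is divergence-free and $\nabla\bm{u}$ is a true gradient, reduces to a sum over faces of jump terms $\jump{\bm{r}_h^{k+1}\hhoVecvars{u}_h}_F$ tested against (the tangential part of) $\bm{s}$; approximating $\bm{s}$ by a piecewise polynomial and using the trace/approximation bounds \eqref{sec_model_eq8}--\eqref{sec_model_eq9} yields a bound by $\dnorm{\nabla\bm{s}}$ times $\eta_{J,h}$ plus (possibly) an oscillation remainder.

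The delicate term is $\nu(\nabla\bm{z}, \nabla\bm{u} - \nabla_h\bm{r}_h^{k+1}\hhoVecvars{u}_h)$ with $\bm{z}\in\sqd{H_0^1(\Omega)}$ divergence-free. Here I would use the continuous equation \eqref{sec_model_eq2a}: since $\nabla\cdot\bm{z}=0$ and $\bm{z}\in\bm{V}$, we have $\nu a(\bm{u},\bm{z}) = (\bm{f},\bm{z})$, i.e. $\nu(\nabla\bm{u},\nabla\bm{z}) = (\bm{f},\bm{z})$. Then
\begin{align*}
	\nu(\nabla\bm{z}, \nabla\bm{u} - \nabla_h\bm{r}_h^{k+1}\hhoVecvars{u}_h) = (\bm{f},\bm{z}) - \nu(\nabla_h\bm{r}_h^{k+1}\hhoVecvars{u}_h, \nabla\bm{z}).
\end{align*}
The strategy is to subtract $\hhoVecvars{I}_h^k\bm{z}$ appropriately: plugging $\hhoVecvars{I}_h^k\bm{z}$ into the discrete equation \eqref{sec_model_eq25a} gives $\nu a_h(\hhoVecvars{u}_h, \hhoVecvars{I}_h^k\bm{z}) + b_h(\hhoVecvars{I}_h^k\bm{z}, p_h) = (\bm{f}, \bm{\pi}_h^{0,k}\bm{z})$. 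The pressure coupling term $b_h(\hhoVecvars{I}_h^k\bm{z}, p_h) = -\sum_T (D_T^k\hhoVecvars{I}_T^k\bm{z}, p_T) = -\sum_T(\pi_T^{0,k}(\nabla\cdot\bm{z}), p_T) = 0$ because $\nabla\cdot\bm{z}=0$. So I would write $\nu(\nabla\bm{z},\nabla\bm{u}-\nabla_h\bm{r}_h^{k+1}\hhoVecvars{u}_h)$ as a sum of: (a) the data-oscillation term $(\bm{f}-\bm{\pi}_h^{0,k}\bm{f}, \bm{z}-\bm{\pi}_h^{0,k}\bm{z})$, controlled by ${\rm osc}(\bm{f},\mcalTh)\cdot\nu^{1/2}\dnorm{\nabla\bm{z}}$ using a Poincar\'e-type bound on $\bm{z}-\bm{\pi}_h^{0,k}\bm{z}$; (b) the stabilization contribution $\nu s_h(\hhoVecvars{u}_h, \hhoVecvars{I}_h^k\bm{z})$, which by Cauchy--Schwarz on $s_h$ is bounded by $\eta_{s,h}\cdot\nu^{1/2}s_h(\hhoVecvars{I}_h^k\bm{z},\hhoVecvars{I}_h^k\bm{z})^{1/2}\lesssim \eta_{s,h}\cdot\nu^{1/2}\dnorm{\nabla\bm{z}}$ via \eqref{sec_model_eq22_1} with $r=-1$; and (c) a consistency term comparing $(\nabla_h\bm{r}_h^{k+1}\hhoVecvars{u}_h, \nabla\bm{z})$ with $(\nabla_h\bm{r}_h^{k+1}\hhoVecvars{u}_h, \nabla_h\bm{r}_h^{k+1}\hhoVecvars{I}_h^k\bm{z})$, which is handled by the commuting property $\bm{r}_T^{k+1}\hhoVecvars{I}_T^k\bm{z} = \bm{\pi}_T^{1,k+1}\bm{z}$ and element-wise integration by parts producing boundary jump terms of $\bm{r}_h^{k+1}\hhoVecvars{u}_h$ (again giving $\eta_{J,h}$) after recognizing that $\bm{z}$ is single-valued.

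Collecting the three bounds and using \eqref{sec_post_eq5} to absorb all the factors $\nu^{1/2}\dnorm{\nabla\bm{z}}$, $\dnorm{q}$, $\dnorm{\nabla\bm{s}}$ by $\nu^{1/2}\dnorm{\nabla\bm{u}-\nabla_h\bm{r}_h^{k+1}\hhoVecvars{u}_h}$, I would arrive at
\begin{align*}
	\nu\dnorm{\nabla\bm{u}-\nabla_h\bm{r}_h^{k+1}\hhoVecvars{u}_h}^2 \lesssim \big(\eta_{d,h} + \eta_{s,h} + \eta_{J,h} + {\rm osc}(\bm{f},\mcalTh)\big)\,\nu^{1/2}\dnorm{\nabla\bm{u}-\nabla_h\bm{r}_h^{k+1}\hhoVecvars{u}_h},
\end{align*}
hence $\nu\dnorm{\nabla\bm{u}-\nabla_h\bm{r}_h^{k+1}\hhoVecvars{u}_h}^2 \lesssim \eta_h^2 + {\rm osc}^2(\bm{f},\mcalTh)$. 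Since $\bm{e_u}^2 = \nu\dnorm{\nabla\bm{u}-\nabla_h\bm{r}_h^{k+1}\hhoVecvars{u}_h}^2 + \nu s_h(\hhoVecvars{u}_h,\hhoVecvars{u}_h)$ and the second summand is exactly $\eta_{s,h}^2 \le \eta_h^2$, the claimed bound $\bm{e_u}^2 \lesssim \eta_h^2 + {\rm osc}^2(\bm{f},\mcalTh)$ follows. The main obstacle I anticipate is the careful bookkeeping in step (c): tracking the element-boundary terms generated when integrating by parts against $\nabla\bm{z}$ versus $\nabla_h\bm{r}_h^{k+1}\hhoVecvars{I}_h^k\bm{z}$, showing all volumetric residuals cancel (thanks to $\bm{r}_T^{k+1}$ reconstructing gradients exactly on $\mbbP^{k+1}$ and $-\nu\Delta\bm{u}+\nabla p = \bm{f}$ holding only in the oscillation-corrected sense), and confirming that the only surviving face contributions are the jumps $h_F^{-1/2}\dnorm{\jump{\bm{r}_h^{k+1}\hhoVecvars{u}_h}_F}_F$ appearing in $\eta_{J,h}$, with the $\bm{z}$-side single-valuedness killing the would-be consistency error.
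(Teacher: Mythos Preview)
Your approach is essentially the paper's proof: Helmholtz decomposition of $\nabla\bm{u}-\nabla_h\bm{r}_h^{k+1}\hhoVecvars{u}_h$, then bounding the $\nabla\bm{z}$, $q\identityM$, and $\bcurl\bm{s}$ pieces separately by $\eta_{s,h}+{\rm osc}$, $\eta_{d,h}$, and $\eta_{J,h}$ respectively, before absorbing via \eqref{sec_post_eq5}. Your treatment of the $q$ and $\bcurl\bm{s}$ terms and of parts (a)--(b) in the $\bm{z}$ term matches the paper.

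The one place you over-complicate is your ``main obstacle'' (c). That term is \emph{identically zero}, not a source of jump contributions: since $\nabla\bm{r}_T^{k+1}\hhoVecvars{u}_T\in\nabla\sqd{\mbbP^{k+1}(T)}$, the very definition \eqref{sec_model_eq6} of the elliptic projector gives
\[
(\nabla\bm{r}_T^{k+1}\hhoVecvars{u}_T,\nabla\bm{z})_T=(\nabla\bm{r}_T^{k+1}\hhoVecvars{u}_T,\nabla\bm{\pi}_T^{1,k+1}\bm{z})_T=(\nabla\bm{r}_T^{k+1}\hhoVecvars{u}_T,\nabla\bm{r}_T^{k+1}\hhoVecvars{I}_T^k\bm{z})_T,
\]
the last equality by \eqref{sec_model_eq19}. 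So no integration by parts, no face terms, no $\eta_{J,h}$ arises here; the jump estimator comes \emph{only} from the $\bcurl\bm{s}$ piece. This is precisely how the paper handles $\mathfrak{T}_1$.

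A minor technical difference: for the $\bcurl\bm{s}$ term the paper does not approximate $\bm{s}$ by a polynomial. Instead it integrates by parts to reach $\sum_{F}(\jump{\bm{r}_h^{k+1}\hhoVecvars{u}_h}_F,\bcurl\bm{s}\,\bm{n}_F)_F$, then uses an $H^{1/2}(F)$--$H^{-1/2}(F)$ duality together with an inverse inequality on the polynomial jump and the fact that $\bcurl\bm{s}\in\bm{H}({\rm div};\Omega)$ to bound the face pairing by $h_F^{-1/2}\dnorm{\jump{\bm{r}_h^{k+1}\hhoVecvars{u}_h}_F}_F\cdot\dnorm{\bcurl\bm{s}}_{{\rm div},T}$. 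Your polynomial-approximation route for $\bm{s}$ would also work and is arguably more standard.
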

	\begin{proof}
		By using Lemma \ref{sec_post_le1}, it follows that 
		\begin{align*}
			&\nu\dnorm{\nabla \bm{u} - \nabla_h\bm{r}_h^{k+1}\hhoVecvars{u}_h}^2 \\
			&= \nu(\nabla \bm{u} - \nabla_h\bm{r}_h^{k+1} \hhoVecvars{u}_h, \nabla\bm{z}) - (\nabla \bm{u} - \nabla_h\bm{r}_h^{k+1} \hhoVecvars{u}_h, q\identityM) + (\nabla \bm{u} - \nabla_h\bm{r}_h^{k+1} \hhoVecvars{u}_h, \bcurl \bm{s}) \\
			&=: \mathfrak{T}_1 + \mathfrak{T}_2 + \mathfrak{T}_3.
		\end{align*}
		
		We first bound $ \mathfrak{T}_1 $. Recalling the weak formulation \eqref{sec_model_eq2}, the definition of elliptic projector \eqref{sec_model_eq6}, the property of $ \hhoVecvars{I}_T^{k+1} $ \eqref{sec_model_eq19}, the property $ \nabla\cdot\bm{z}=0 $ and the discrete problem \eqref{sec_model_eq25}, we have
		\begin{align*}
			\mathfrak{T}_1 &= \nu(\nabla\bm{u},\nabla\bm{z}) - \nu(\nabla_h\bm{r}_h^{k+1}\hhoVecvars{u}_h, \nabla\bm{z}) \\
			&= (\bm{f},\bm{z}) - (\nabla\cdot\bm{z}, p) - \nu(\nabla_h\bm{r}_h^{k+1}\hhoVecvars{u}_h, \nabla_h\bm{\pi}_h^{1,k+1}\bm{z}) \\
			&= (\bm{f},\bm{z}) - \nu(\nabla_h\bm{r}_h^{k+1}\hhoVecvars{u}_h, \nabla_h\bm{r}_h^{k+1}\hhoVecvars{I}_h^k\bm{z}) \\
			&= (\bm{f},\bm{z}) - \big[ (\bm{f}, \bm{\pi}_h^{0,k}\bm{z}) - \nu\sum_{T\in\mcalTh}s_T(\hhoVecvars{u}_T, \hhoVecvars{I}_T^k\bm{z}) + \sum_{T\in\mcalTh}(D_T^k\hhoVecvars{I}_T^k\bm{z}, p_T)_T \big] \\
			&= (\bm{f}, \bm{z}-\bm{\pi}_h^{0,k}\bm{z}) + \nu\sum_{T\in\mcalTh}s_T(\hhoVecvars{u}_T,\hhoVecvars{I}_T^k\bm{z}) - \sum_{T\in\mcalTh}(\pi_T^{0,k}(\nabla\cdot\bm{z}), p_T)_T \\
			&= (\bm{f}-\bm{\pi}_h^{0,k}\bm{f}, \bm{z}-\bm{\pi}_h^{0,k}\bm{z}) + \nu\sum_{T\in\mcalTh}s_T(\hhoVecvars{u}_T,\hhoVecvars{I}_T^k\bm{z}) \\
			&\leq \sum_{T\in\mcalTh}h_T\dnorm{\bm{f}-\bm{\pi}_T^{0,k}\bm{f}}_T\dnorm{\nabla \bm{z}}_T + \nu\sum_{T\in\mcalTh}s_T(\hhoVecvars{u}_T,\hhoVecvars{u}_T)^{\frac{1}{2}}s_T(\hhoVecvars{I}_T^k\bm{z},\hhoVecvars{I}_T^k\bm{z})^{\frac{1}{2}} \\
			&\lesssim \Big(\sum_{T\in\mcalTh}\nu^{-\frac{1}{2}}\big( {\rm osc}(\bm{f},T) + \eta_{s,T} \big)\Big)(\nu\dnorm{\nabla\bm{u} - \nabla_h\bm{r}_h^{k+1}\hhoVecvars{u}_h}),
		\end{align*}
		where we have used \eqref{sec_post_eq5} and the property \eqref{sec_model_eq22_1} of $ s_T $, by adding all elements together, such that $ s_h(\hhoVecvars{I}_h^k\bm{z},\hhoVecvars{I}_h^k\bm{z})^{\frac{1}{2}}\lesssim \dnorm{\nabla \bm{z}} \leq \dnorm{\nabla\bm{u} - \nabla_h\bm{r}_h^{k+1}\hhoVecvars{u}_h} $ to pass to the penultimate line. 
		
		Using \eqref{sec_model_eq1b} and \eqref{sec_post_eq5}, we directly have 
		\begin{align*}
			\mathfrak{T}_2 &= - (\nabla\bm{u}, q\identityM) + (\nabla_h\bm{r}_h^{k+1}\hhoVecvars{u}_h, q\identityM) \\
			&= - (\nabla\cdot\bm{u}, q) + \sum_{T\in\mcalTh}(\nabla\cdot\bm{r}_T^{k+1}\hhoVecvars{u}_T, q)_T \\
			&\leq \Big(\sum_{T\in\mcalTh}\nu^{-\frac{1}{2}}\eta_{d,T}\Big) (\nu\dnorm{\nabla\bm{u} - \nabla_h\bm{r}_h^{k+1}\hhoVecvars{u}_h}).
		\end{align*}
		
		We are now in a position to get the boundedness of $ \mathfrak{T}_3 $, by using the integration by parts, the derivation follows,
		\begin{align*}
			\mathfrak{T}_3 &= \sum_{T\in\mcalT_h}(\nabla (\bm{u} - \bm{r}_T^{k+1} \hhoVecvars{u}_T), \bcurl \bm{s} )_T\\
			&= \sum_{T\in\mcalT_h} \Big( -\underset{=\, 0}{\underbrace{(\bm{u}-\bm{r}_T^{k+1}\hhoVecvars{u}_T, \nabla\cdot\bcurl \bm{s})_T} } + \sum_{F\in\mcalF_T}(\bm{u}-\bm{r}_T^{k+1}\hhoVecvars{u}_T, \bcurl\bm{s}\,\bm{n}_{TF})_F \Big) \\
			&= \underset{=\, 0}{\underbrace{\sum_{T\in\mcalT_h}\sum_{F\in\mcalF_T}(\bm{u}, \bcurl\bm{s}\,\bm{n}_{TF})_F} }   + \sum_{T\in\mcalT_h}\sum_{F\in\mcalF_T}(-\bm{r}_T^{k+1}\hhoVecvars{u}_T, \bcurl\bm{s}\,\bm{n}_{TF})_F \\
			&= \sum_{F\in\mcalF_h}(\jump{\bm{r}_h^{k+1}\hhoVecvars{u}_h}_F, \bcurl\bm{s}\,\bm{n}_{F})_F \\
			&\leq \sum_{F\in\mcalF_h} \Big(\dnorm{\jump{\bm{r}_h^{k+1}\hhoVecvars{u}_h}_F}_{\frac{1}{2},2,F} \dnorm{\bcurl\bm{s}\,\bm{n}_{F}}_{-\frac{1}{2},2,F} \Big) \\
			&\lesssim \Big( \sum_{F\in\mcalF_h} h_F^{-\frac{1}{2}}\dnorm{\jump{\bm{r}_h^{k+1}\hhoVecvars{u}_h}_F}_F \Big) \Big( \sum_{F\in\mcalF_h} \sum_{T\in\mcalT_F}\dnorm{\bcurl\bm{s}}_{{\rm div},T} \Big) \\
			&\lesssim \Big( \sum_{T\in\mcalTh}\nu^{-\frac{1}{2}} (\nu \sum_{F\in\mcalF_T} h_F^{-1}\dnorm{\jump{\bm{r}_h^{k+1}\hhoVecvars{u}_h}_F}_F^2 )^{\frac{1}{2}} \Big) \Big( \sum_{T\in\mcalTh}\dnorm{\bcurl\bm{s}}_T \Big) \\
			&\lesssim \Big(\sum_{T\in\mcalTh} \nu^{-\frac{1}{2}} \eta_{J,T} \Big)(\nu\dnorm{\nabla\bm{u} - \nabla_h\bm{r}_h^{k+1}\hhoVecvars{u}_h}),
		\end{align*}
		where we have used the inverse Sobolev embedding inequality (c.f., \citep[Corollary 1.29]{2020_hho_book}) and the fact $ \bcurl\bm{s}\in \bm{H}({\rm div},\Omega) $ (defined by \eqref{sec_model_eq1_add1}) to pass to the sixth line.
		
		Collecting the above estimates and the stability \eqref{sec_model_eq22}, the conclusion is now straightforward.
		\qedhere
	\end{proof}
	
	\begin{Lemma}
		Let $ p $ and $ p_h $ denote the unique pressure solutions to problems \eqref{sec_model_eq2} and \eqref{sec_model_eq25} and, $ e_p $ be the error defined in \eqref{sec_model_eq26_1}. The following upper bound estimate holds true,
		\begin{align}
			e_p^2 \lesssim \eta_h^2 + {\rm osc}^2(\bm{f},\mcalTh), 
		\end{align}
		with hidden constant independent of both the meshsize and the problem data.
	\end{Lemma}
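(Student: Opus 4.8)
The plan is to exploit the continuous inf‑sup condition \eqref{sec_model_eq4}. Since $p-p_h\in L_0^2(\Omega)=Q$ and $e_p^2=\sum_{T\in\mcalTh}\nu^{-1}\dnorm{p-p_T}_T^2=\nu^{-1}\dnorm{p-p_h}^2$, it suffices to estimate $b(\bm v,p-p_h)$ for an arbitrary $\bm v\in\bm V$ with $\dnorm{\nabla\bm v}=1$, since then $\beta\,\nu^{-1/2}\dnorm{p-p_h}\le \nu^{-1/2}\,b(\bm v,p-p_h)$ for a suitable such $\bm v$. First I would split $b(\bm v,p-p_h)=-(\nabla\cdot\bm v,p)+(\nabla\cdot\bm v,p_h)$ and rewrite each piece. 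The continuous momentum equation \eqref{sec_model_eq2a} gives $-(\nabla\cdot\bm v,p)=b(\bm v,p)=(\bm f,\bm v)-\nu(\nabla\bm u,\nabla\bm v)$. For the second piece, because $p_T\in\mbbP^k(T)$, the commuting property $D_T^k\hhoVecvars{I}_T^k\bm v=\pi_T^{0,k}(\nabla\cdot\bm v)$ from \eqref{sec_model_eq19} yields $\sum_{T\in\mcalTh}(\nabla\cdot\bm v,p_T)_T=\sum_{T\in\mcalTh}(D_T^k\hhoVecvars{I}_T^k\bm v,p_T)_T=-b_h(\hhoVecvars{I}_h^k\bm v,p_h)$; note $\hhoVecvars{I}_h^k\bm v\in\hhoVecvars{V}_{h,0}^k$ since $\bm v\in\bm V$, so it is an admissible test function in \eqref{sec_model_eq25a}, which then replaces $-b_h(\hhoVecvars{I}_h^k\bm v,p_h)$ by $-(\bm f,\bm\pi_h^{0,k}\bm v)+\nu a_h(\hhoVecvars{u}_h,\hhoVecvars{I}_h^k\bm v)$.

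Next I would expand $a_h$ via \eqref{sec_model_eq21}: using $\bm r_T^{k+1}\hhoVecvars{I}_T^k\bm v=\bm\pi_T^{1,k+1}\bm v$ from \eqref{sec_model_eq19} and the orthogonality \eqref{sec_model_eq6} of the elliptic projector (legitimate because $\bm r_T^{k+1}\hhoVecvars{u}_T$ is a $\mbbP^{k+1}(T)$ field), the consistent part of $a_h$ becomes $(\nabla_h\bm r_h^{k+1}\hhoVecvars{u}_h,\nabla\bm v)$. Collecting all contributions, and using that $\bm\pi_T^{0,k}\bm f$ is $L^2(T)$‑orthogonal to $\bm v-\bm\pi_T^{0,k}\bm v$, one arrives at the residual identity
\[
b(\bm v,p-p_h)=(\bm f-\bm\pi_h^{0,k}\bm f,\,\bm v-\bm\pi_h^{0,k}\bm v)-\nu(\nabla\bm u-\nabla_h\bm r_h^{k+1}\hhoVecvars{u}_h,\,\nabla\bm v)+\nu\sum_{T\in\mcalTh}s_T(\hhoVecvars{u}_T,\hhoVecvars{I}_T^k\bm v).
\]

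It then remains to bound the three terms. The first is handled by Cauchy–Schwarz, the approximation estimate \eqref{sec_model_eq7} in the form $\dnorm{\bm v-\bm\pi_T^{0,k}\bm v}_T\lesssim h_T\dnorm{\nabla\bm v}_T$, and the definition \eqref{sec_post_eq3}, giving $\lesssim\nu^{1/2}\,{\rm osc}(\bm f,\mcalTh)\dnorm{\nabla\bm v}$. The second is $\le\nu^{1/2}\big(\nu^{1/2}\dnorm{\nabla\bm u-\nabla_h\bm r_h^{k+1}\hhoVecvars{u}_h}\big)\dnorm{\nabla\bm v}\le\nu^{1/2}\bm e_u\dnorm{\nabla\bm v}$, and Lemma \ref{sec_post_le5} bounds $\bm e_u$ by $(\eta_h^2+{\rm osc}^2(\bm f,\mcalTh))^{1/2}$. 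For the third, a Cauchy–Schwarz inequality in the positive semidefinite form $s_T$ together with either \eqref{sec_model_eq22_1} (with $r=-1$) or the boundedness \eqref{sec_model_eq16} of $\hhoVecvars{I}_T^k$ combined with (S2) gives $s_T(\hhoVecvars{I}_T^k\bm v,\hhoVecvars{I}_T^k\bm v)^{1/2}\lesssim\dnorm{\nabla\bm v}_T$, whence this term is $\lesssim\nu\,s_h(\hhoVecvars{u}_h,\hhoVecvars{u}_h)^{1/2}\dnorm{\nabla\bm v}=\nu^{1/2}\eta_{s,h}\dnorm{\nabla\bm v}\le\nu^{1/2}\eta_h\dnorm{\nabla\bm v}$. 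Putting these together, dividing by $\beta\dnorm{\nabla\bm v}$, and squaring yields $e_p^2\lesssim\eta_h^2+{\rm osc}^2(\bm f,\mcalTh)$.

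The main obstacle is deriving the clean residual identity: one must recognize that the commuting property of $D_T^k$ makes $(\nabla\cdot\bm v,p_h)$ match $-b_h(\hhoVecvars{I}_h^k\bm v,p_h)$ exactly, so the discrete momentum equation can be substituted without leftover terms, and that the elliptic‑projector orthogonality cancels the polynomial part of $a_h(\hhoVecvars{u}_h,\hhoVecvars{I}_h^k\bm v)$ against $\nu(\nabla\bm u,\nabla\bm v)$ up to the reconstruction defect $\nabla\bm u-\nabla_h\bm r_h^{k+1}\hhoVecvars{u}_h$. In contrast to the velocity bound, the stabilization consistency (S3) is not invoked here — the stabilization term is simply absorbed into $\eta_{s,h}$ by Cauchy–Schwarz — but the argument does rely on the already‑established velocity estimate (Lemma \ref{sec_post_le5}), so the present lemma is essentially a corollary of it.
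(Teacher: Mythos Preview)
Your proof is correct and follows essentially the same route as the paper: derive the residual identity for $(\nabla\cdot\bm v,\,p-p_h)$ by combining the continuous momentum equation, the commuting property $D_T^k\hhoVecvars{I}_T^k\bm v=\pi_T^{0,k}(\nabla\cdot\bm v)$, the discrete scheme \eqref{sec_model_eq25a}, and the elliptic-projector orthogonality, then bound the three resulting terms and conclude via the continuous inf--sup condition \eqref{sec_model_eq4} together with Lemma~\ref{sec_post_le5}. Your exposition is in fact slightly more explicit than the paper's about why $(\nabla_h\bm r_h^{k+1}\hhoVecvars{u}_h,\nabla_h\bm r_h^{k+1}\hhoVecvars{I}_h^k\bm v)$ collapses to $(\nabla_h\bm r_h^{k+1}\hhoVecvars{u}_h,\nabla\bm v)$.
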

	\begin{proof}
		We deduce from the scheme \eqref{sec_model_eq25a}, the property \eqref{sec_model_eq19} and \eqref{sec_model_eq22_1} that for all $ \bm{v}\in\sqd{H^1_0(\Omega)} $, we write 
		\begin{align*}
			(\nabla\cdot\bm{v}, p-p_h) &= (\nabla\bm{u},\nabla\bm{v}) - (\bm{f},\bm{v}) - (\nabla\cdot\bm{v}, p_h) \\
			&= (\nabla\bm{u},\nabla\bm{v}) - (\bm{f},\bm{v}) -(\bm{\pi}_h^{0,k}(\nabla\cdot\bm{v}),p_h)\\
			&= (\nabla\bm{u},\nabla\bm{v}) - (\bm{f},\bm{v}) - (D_h^k\hhoVecvars{I}_h^k\bm{v},p_h) \\
			&= (\nabla\bm{u},\nabla\bm{v}) - (\bm{f},\bm{v}) + (\bm{f},\bm{\pi}_h^{0,k}\bm{v}) - \nu(\nabla_h\bm{r}_h^{k+1}\hhoVecvars{u}_h,\nabla_h\bm{r}_h^{k+1}\hhoVecvars{u}_h) - \nu s_h(\hhoVecvars{u}_h,\hhoVecvars{I}_h^k\bm{v})\\
			&= (\bm{f}-\bm{\pi}_h^{0,k}\bm{f}, \bm{\pi}_h^{0,k}\bm{v}-\bm{v}) + \nu(\nabla\bm{u}-\nabla_h\bm{r}_h^{k+1}\hhoVecvars{u}_h, \nabla\bm{v}) - \nu s_h(\hhoVecvars{u}_h,\hhoVecvars{I}_h^k\bm{v})\\
			&\lesssim \Big(\sum_{T\in\mcalTh}h_T\dnorm{\bm{f}-\bm{\pi}_T^{0,k}\bm{f}}_T \Big) \dnorm{\nabla\bm{v}} + \nu\dnorm{\nabla\bm{u}-\nabla_h\bm{r}_h^{k+1}\hhoVecvars{u}_h}\dnorm{\nabla\bm{v}} + \nu s_h(\hhoVecvars{u}_h,\hhoVecvars{u}_h)^{\frac{1}{2}}\dnorm{\nabla \bm{v}}.
		\end{align*}
		Noting that $ \dnorm{p-p_h} \in Q $ and by using the inf-sup condition \eqref{sec_model_eq4}, it holds
		\begin{align*}
			\nu^{-1}\dnorm{p-p_h}^2 \lesssim {\rm osc}^2(\bm{f},\mcalTh) + \eta_{s,h}^2 + \nu\dnorm{\nabla\bm{u}-\nabla_h\bm{r}_h^{k+1}\hhoVecvars{u}_h}^2,
		\end{align*}
		which, involving Lemma \ref{sec_post_le5}, gives the conclusion.
		\qedhere
	\end{proof}
	
	As an immediate consequence of the two lemmas above, we obtain the following theorem.
	\begin{Theorem}[Upper bound]\label{sec_post_th1}
		Let $ (\bm{u}, p) $ and $ (\hhoVecvars{u}_h, p_h) $ denote the unique solutions to problems \eqref{sec_model_eq2} and \eqref{sec_model_eq25}. Let $ \bm{e_u} $ and $ e_p $ be the errors defined in \eqref{sec_model_eq26_1}. The following upper bound estimate holds:
		\begin{align}
			\bm{e_u}^2 + e_p^2 \lesssim \eta_h^2 + {\rm osc}^2(\bm{f},\mcalTh),
		\end{align}
		where the hidden constant may be dependent on the stability constants $ \beta $ (cf., \eqref{sec_model_eq4}), $ \lambda $ (cf., \eqref{sec_model_eq22}) and $ \widetilde{\beta} $ (cf., \eqref{sec_model_eq24}), but independent of $ \bm{u} $, $ p $, $ h $ and $ \nu $.
	\end{Theorem}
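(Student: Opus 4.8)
The plan is direct: the statement is obtained by simply adding the two lemmas that immediately precede it. From Lemma~\ref{sec_post_le5} we already have $\bm{e_u}^2 \lesssim \eta_h^2 + {\rm osc}^2(\bm{f},\mcalTh)$, and from the subsequent lemma $e_p^2 \lesssim \eta_h^2 + {\rm osc}^2(\bm{f},\mcalTh)$; summing these yields $\bm{e_u}^2 + e_p^2 \lesssim \eta_h^2 + {\rm osc}^2(\bm{f},\mcalTh)$, with hidden constant at most twice the larger of the two constants. The one point worth spelling out before the addition is the bookkeeping of the error measures: by the definitions of $\bm{r}_h^{k+1}$, $s_h$ and $\eta_{s,h}$, the quantity $\bm{e_u}^2$ from \eqref{sec_model_eq26_1} equals $\nu\dnorm{\nabla\bm{u}-\nabla_h\bm{r}_h^{k+1}\hhoVecvars{u}_h}^2 + \eta_{s,h}^2$, so the stabilization part of $\bm{e_u}^2$ is trivially a summand of $\eta_h^2$ (cf.\ \eqref{sec_post_eq2}), and only the consistency/graient part genuinely had to be estimated — which is exactly what Lemma~\ref{sec_post_le5} does.

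For the constant dependence, I would track it through the two proofs: the $\bm{e_u}$-bound brings in the Helmholtz-decomposition constant of \eqref{sec_post_eq5} together with the stability constant $\lambda$ of \eqref{sec_model_eq22} (used to pass from $\nu\dnorm{\nabla\bm{u}-\nabla_h\bm{r}_h^{k+1}\hhoVecvars{u}_h}^2 + \nu s_h(\hhoVecvars{u}_h,\hhoVecvars{u}_h)$ back to $a_T$-type quantities), while the $e_p$-bound additionally uses the continuous inf-sup constant $\beta$ of \eqref{sec_model_eq4}; the discrete inf-sup constant $\widetilde{\beta}$ of \eqref{sec_model_eq24} enters only implicitly through well-posedness. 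None of $\bm{u}$, $p$, $h$, $\nu$ appears in any of these, which gives the asserted independence.

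Consequently I do not expect any real obstacle at this last step — the substantive work was already carried out in the two lemmas (Helmholtz decomposition of $\nabla\bm{u}-\nabla_h\bm{r}_h^{k+1}\hhoVecvars{u}_h$, the polynomial consistency of $s_T$, the $L^2$-orthogonality of the projectors, the inverse Sobolev embedding for the $\bcurl\bm{s}$ term, and the two inf-sup conditions). The theorem is a one-line corollary, and the only care needed is to present the identification $\bm{e_u}^2 = \nu\dnorm{\nabla\bm{u}-\nabla_h\bm{r}_h^{k+1}\hhoVecvars{u}_h}^2 + \eta_{s,h}^2$ cleanly before adding.
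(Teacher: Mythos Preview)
Your proposal is correct and matches the paper's approach exactly: the paper also presents this theorem as an immediate consequence of the two preceding lemmas, with no additional argument. Your extra bookkeeping (the identification $\bm{e_u}^2 = \nu\dnorm{\nabla\bm{u}-\nabla_h\bm{r}_h^{k+1}\hhoVecvars{u}_h}^2 + \eta_{s,h}^2$ and the tracking of constant dependencies) is accurate and goes slightly beyond what the paper spells out.
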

	
	In addition to the above-mentioned upper bound, we have the lower bound as follows.
	
	\begin{Theorem}[Local lower bound] Under the assumption of Theorem \ref{sec_post_th1} and for the fixed $ T\in\mcalTh $, we have the local lower bound 
		\begin{align*}
			\eta_{d,T}^2 \leq \bm{e}_{\bm{u},T}^2, \quad \eta_{s,T}^2 \leq \bm{e}_{\bm{u},T}^2, \quad \eta_{J,T}^2 \lesssim \sum_{T'\in\mcalT_{N,T}} \bm{e}_{\bm{u},T'}^2.
		\end{align*}
	\end{Theorem}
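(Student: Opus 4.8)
The plan is to prove the three bounds separately; the first two cost essentially nothing, and all the content is in the jump term. For $\eta_{d,T}$, I would use the exact incompressibility $\nabla\cdot\bm{u}=0$ from \eqref{sec_model_eq1b} to rewrite $\nabla\cdot\bm{r}_T^{k+1}\hhoVecvars{u}_T=\nabla\cdot(\bm{r}_T^{k+1}\hhoVecvars{u}_T-\bm{u})$ on $T$, and then bound the divergence of the vector field $\bm{r}_T^{k+1}\hhoVecvars{u}_T-\bm{u}$ by its full gradient in $L^2(T)$; this reduces $\eta_{d,T}^2$ to $\nu\dnorm{\nabla\bm{u}-\nabla\bm{r}_T^{k+1}\hhoVecvars{u}_T}_T^2$, which is bounded by $\bm{e}_{\bm{u},T}^2$ directly from the definition \eqref{sec_model_eq26_1}. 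For $\eta_{s,T}$ there is nothing to prove: since $\bm{e}_{\bm{u},T}^2=\nu\dnorm{\nabla\bm{u}-\nabla\bm{r}_T^{k+1}\hhoVecvars{u}_T}_T^2+\nu s_T(\hhoVecvars{u}_T,\hhoVecvars{u}_T)$ and $s_T$ is positive semidefinite by (S1), discarding the first nonnegative summand gives $\eta_{s,T}^2=\nu s_T(\hhoVecvars{u}_T,\hhoVecvars{u}_T)\le\bm{e}_{\bm{u},T}^2$.

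For $\eta_{J,T}$, fix $F\in\mcalF_T$ and denote by $T_1,T_2$ the elements of $\mcalT_F$ (with $T_1=T_2$ if $F$ is a boundary face). Since $\bm{u}\in\sqd{H_0^1(\Omega)}$ has vanishing jump across interior faces and vanishing trace on $\partial\Omega$, and $\hhoVecvars{u}_h\in\hhoVecvars{V}_{h,0}^k$, I would first note that $\jump{\bm{r}_h^{k+1}\hhoVecvars{u}_h}_F=\jump{\bm{r}_h^{k+1}\hhoVecvars{u}_h-\bm{u}}_F$ and then split this quantity, using the face projector $\bm{\pi}_F^{0,k}$, into its $\mbbP^k(F)$-component and the complementary component. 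For the polynomial component the crucial fact is that the face unknown $\bm{u}_F$ is single-valued, so the definition \eqref{sec_model_eq19_1} of $\bdeltaF$ gives $\bm{\pi}_F^{0,k}(\bm{r}_{T_i}^{k+1}\hhoVecvars{u}_{T_i})_{|F}=\bm{u}_F+\bm{\delta}_{T_iF}^k\hhoVecvars{u}_{T_i}$, hence $\bm{\pi}_F^{0,k}\jump{\bm{r}_h^{k+1}\hhoVecvars{u}_h}_F=\bm{\delta}_{T_1F}^k\hhoVecvars{u}_{T_1}-\bm{\delta}_{T_2F}^k\hhoVecvars{u}_{T_2}$; a triangle inequality then yields $h_F^{-1}\dnorm{\bm{\pi}_F^{0,k}\jump{\bm{r}_h^{k+1}\hhoVecvars{u}_h}_F}_F^2\lesssim s_{T_1}(\hhoVecvars{u}_{T_1},\hhoVecvars{u}_{T_1})+s_{T_2}(\hhoVecvars{u}_{T_2},\hhoVecvars{u}_{T_2})$, because each $h_F^{-1}\dnorm{\bm{\delta}_{T_iF}^k\hhoVecvars{u}_{T_i}}_F^2$ is literally one of the summands in \eqref{sec_model_eq21_1}. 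For the complementary component I would apply \eqref{sec_model_eq16_1} componentwise to $\bm{r}_{T_i}^{k+1}\hhoVecvars{u}_{T_i}-\bm{u}\in\sqd{H^1(T_i)}$, obtaining $h_F^{-1}\dnorm{(\mathrm{Id}-\bm{\pi}_F^{0,k})(\bm{r}_{T_i}^{k+1}\hhoVecvars{u}_{T_i}-\bm{u})_{|F}}_F^2\lesssim\dnorm{\nabla\bm{u}-\nabla\bm{r}_{T_i}^{k+1}\hhoVecvars{u}_{T_i}}_{T_i}^2$. Adding the two contributions and multiplying by $\nu$ gives $\nu h_F^{-1}\dnorm{\jump{\bm{r}_h^{k+1}\hhoVecvars{u}_h}_F}_F^2\lesssim\sum_{T'\in\mcalT_F}\bm{e}_{\bm{u},T'}^2$; summing over $F\in\mcalF_T$, and using that the number of faces of $T$ is uniformly bounded (mesh regularity), that $h_F$ and $h_T$ are equivalent by \eqref{sec_model_eq4_1}, and that $\bigcup_{F\in\mcalF_T}\mcalT_F\subset\mcalT_{N,T}$ with uniformly bounded overlap, delivers $\eta_{J,T}^2\lesssim\sum_{T'\in\mcalT_{N,T}}\bm{e}_{\bm{u},T'}^2$.

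The only genuinely delicate step is this jump bound. The obstacle is that $\jump{\bm{r}_h^{k+1}\hhoVecvars{u}_h}_F$ is not piecewise polynomial — because $\bm{u}$ is not — so neither the discrete trace inequality \eqref{sec_model_eq12} nor a plain polynomial-approximation estimate can be applied to it directly, and the jump cannot be absorbed into a single elementwise quantity without more structure. The decomposition along $\bm{\pi}_F^{0,k}$ is exactly what makes the argument work: the polynomial part of the jump collapses, thanks to single-valuedness of $\bm{u}_F$, to a difference of stabilisation traces $\bdeltaF$ and is therefore absorbed by $\eta_{s,T}$ — this is precisely what the particular reformulation \eqref{sec_model_eq21_1} of the stabiliser buys us and the reason a \emph{local} lower bound is available — whereas the remaining high-frequency part is controlled by the elementwise $H^1$ error via \eqref{sec_model_eq16_1}. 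Care is needed to match the $h_F^{-1}$ face weights shared by $\eta_{J,T}$ and $s_T$ through \eqref{sec_model_eq4_1}, and to observe that once the per-face estimates are summed it is the node-neighbourhood $\mcalT_{N,T}$, not merely the face neighbours of $T$, that naturally appears.
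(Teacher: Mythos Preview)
Your proposal is correct and follows essentially the same approach as the paper: the two constant-free bounds are handled exactly as you describe, and for $\eta_{J,T}$ the paper likewise writes $\jump{\bm{r}_h^{k+1}\hhoVecvars{u}_h}_F=\jump{\bm{r}_h^{k+1}\hhoVecvars{u}_h-\bm{u}}_F$, splits along $\bm{\pi}_F^{0,k}$, bounds the projected part via $\bm{\pi}_F^{0,k}(\bm{r}_{T'}^{k+1}\hhoVecvars{u}_{T'}-\bm{u}_F)=\bm{\delta}_{T'F}^k\hhoVecvars{u}_{T'}$ and the definition \eqref{sec_model_eq21_1} of $s_{T'}$, and bounds the orthogonal part via \eqref{sec_model_eq16_1}. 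One small remark: what the per-face argument actually produces is a sum over the face neighbours $\bigcup_{F\in\mcalF_T}\mcalT_F$, which is contained in $\mcalT_{N,T}$; the node-neighbourhood is used in the statement only as a convenient (slightly larger) patch.
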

	\begin{proof}
		Clearly, from the definitions \eqref{sec_model_eq26_1} and \eqref{sec_post_eq2} of $ \bm{e}_{\bm{u},T} $ and $ \eta_{s,T} $, respectively, one has $ \eta_{s,T}^2 \leq \bm{e}_{\bm{u},T}^2 $. With the fact that $ \nabla\cdot\bm{u} = 0 $, we get 
		\begin{align*}
			\eta_{d,T}^2 = \nu \dnorm{\nabla\cdot\bm{r}_T^{k+1}\hhoVecvars{u}_T}_T^2  = \nu \dnorm{\nabla\cdot(\bm{u} - \bm{r}_T^{k+1}\hhoVecvars{u}_T)}_T^2 \leq \nu \dnorm{\nabla(\bm{u} - \bm{r}_T^{k+1}\hhoVecvars{u}_T)}_T^2 = \bm{e}_{\bm{u},T}^2.
		\end{align*}
		
		It remains to bound $ \eta_{J,T}^2 =  \nu\sum_{F\in\mcalF_T}h_F^{-1}\dnorm{\jump{\bm{r}_h^{k+1}\hhoVecvars{u}_h}_F}_F^2 $. Noting that, for all $ F\in\mcalFh^i $ with bordering elements $ T_1 $ and $ T_2 $, $ \jump{\bm{r}_h^{k+1}\hhoVecvars{u}_h}_F = (\bm{r}_{T_1}^{k+1}\hhoVecvars{u}_{T_1} - \bm{u}_F) - (\bm{r}_{T_2}^{k+1}\hhoVecvars{u}_{T_2} - \bm{u}_F) $ and, for all $ F\in\mcalFh^b $ with bordering element $ T $, $ \jump{\bm{r}_h^{k+1}\hhoVecvars{u}_h}_F = \bm{r}_{T}^{k+1}\hhoVecvars{u}_{T} - \bm{u}_F $ owing to $ \bm{u}_F=\bm{0} $. Similarly, for all $ \bm{u}\in\sqd{H_0^1(\Omega)} $ and all $ F\in\mcalFh $, $ \jump{\bm{r}_h^{k+1}\hhoVecvars{u}_h}_F = \jump{\bm{r}_h^{k+1}\hhoVecvars{u}_h - \bm{u}}_F $. Inserting $ \bm{\pi}_F^{0,k}\jump{\bm{r}_h^{k+1}\hhoVecvars{u}_h}_F - \bm{\pi}_F^{0,k}\jump{\bm{r}_h^{k+1}\hhoVecvars{u}_h - \bm{u}}_F = 0 $ and using the triangle inequality, we obtain
		\begin{align*}
			&\nu\sum_{F\in\mcalF_T}h_F^{-1}\dnorm{\jump{\bm{r}_h^{k+1}\hhoVecvars{u}_h}_F}_F^2 \\
			& \leq 2\nu \sum_{F\in\mcalF_T}h_F^{-1} \big( \dnorm{ \jump{\bm{r}_h^{k+1}\hhoVecvars{u}_h - \bm{u}}_F  - \bm{\pi}_F^{0,k} \jump{ \bm{r}_h^{k+1}\hhoVecvars{u}_h-\bm{u} }_F }_F^2 + \dnorm{\bm{\pi}_F^{0,k}\jump{\bm{r}_h^{k+1}\hhoVecvars{u}_h}_F}^2_F  \big) \\
			& \leq 2 \nu \sum_{F\in\mcalF_T}\sum_{T'\in\mcalT_F} h_F^{-1}\Big(\dnorm{(\bm{r}_{T'}^{k+1}\hhoVecvars{u}_{T'}-\bm{u} )-\bm{\pi}_F^{0,k}(\bm{r}_{T'}^{k+1}\hhoVecvars{u}_{T'}-\bm{u} )}_F^2 + \dnorm{\bm{\pi}_F^{0,k}(\bm{r}_{T'}^{k+1}\hhoVecvars{u}_{T'} - \bm{u}_F) }_F^2\Big) \\
			&\leq 2 \nu \sum_{T'\in\mcalT_{N,T}} \sum_{F\in\mcalF_{T'}}h_F^{-1}\Big(\dnorm{(\bm{r}_{T'}^{k+1}\hhoVecvars{u}_{T'}-\bm{u} )-\bm{\pi}_F^{0,k}(\bm{r}_{T'}^{k+1}\hhoVecvars{u}_{T'}-\bm{u} )}_F^2 + \dnorm{\bm{\pi}_F^{0,k}(\bm{r}_{T'}^{k+1}\hhoVecvars{u}_{T'} - \bm{u}_F) }_F^2\Big)  \\
			&\lesssim \sum_{T'\in\mcalT_{N,T}} \bm{e}_{\bm{u},T'}^2,
		\end{align*}
		where we have used the orthogonal approximation property \eqref{sec_model_eq16_1} and the definition of $ s_T $ to pass to the penultimate line.
		
		The conclusion is now straightforward.
		\qedhere
	\end{proof} 
	Summing over $ T\in\mcalTh $, we have the following lower bound for the error estimator.
	\begin{Theorem}[Lower bound] Under the assumption of Theorem \ref{sec_post_th1}, we have the lower bound 
		\begin{align}
			\eta_{h}^2 \lesssim \bm{e_u}^2 + e_p^2 + {\rm osc}^2(\bm{f},\mcalTh).
		\end{align}
	\end{Theorem}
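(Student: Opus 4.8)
The statement is an immediate corollary of the preceding Local lower bound theorem, so the plan is essentially to sum the three local estimates over all mesh elements $T\in\mcalTh$ and recognize that the resulting sums telescope into the global error quantities. First I would recall from the definition \eqref{sec_post_eq2} that $\eta_h^2 = \eta_{d,h}^2 + \eta_{s,h}^2 + \eta_{J,h}^2 = \sum_{T\in\mcalTh}(\eta_{d,T}^2 + \eta_{s,T}^2 + \eta_{J,T}^2)$, and from \eqref{sec_model_eq26_1} that $\bm{e_u}^2 = \sum_{T\in\mcalTh}\bm{e}_{\bm{u},T}^2$. Then, applying the three bounds $\eta_{d,T}^2 \le \bm{e}_{\bm{u},T}^2$, $\eta_{s,T}^2 \le \bm{e}_{\bm{u},T}^2$, and $\eta_{J,T}^2 \lesssim \sum_{T'\in\mcalT_{N,T}}\bm{e}_{\bm{u},T'}^2$ term by term and summing over $T$, the first two sums are bounded directly by $2\bm{e_u}^2$.

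The one slightly nontrivial point — and the only place any care is needed — is the jump term $\sum_{T\in\mcalTh}\eta_{J,T}^2 \lesssim \sum_{T\in\mcalTh}\sum_{T'\in\mcalT_{N,T}}\bm{e}_{\bm{u},T'}^2$. Here I would invoke the mesh-regularity assumption: each element $T'$ appears in the neighbor set $\mcalT_{N,T}$ for only a uniformly bounded number of elements $T$ (a consequence of shape and contact regularity in the sense of \citep[Definition 1.9]{2020_hho_book}, which controls the number of elements sharing a node with a given element). Hence the double sum can be re-indexed as $\sum_{T'\in\mcalTh} \#\{T : T'\in\mcalT_{N,T}\}\,\bm{e}_{\bm{u},T'}^2 \lesssim \sum_{T'\in\mcalTh}\bm{e}_{\bm{u},T'}^2 = \bm{e_u}^2$, with the hidden constant depending only on $\varrho$ and $d$. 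Combining the three contributions gives $\eta_h^2 \lesssim \bm{e_u}^2$.

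Finally, I would note that the statement as written also includes $e_p^2$ and ${\rm osc}^2(\bm{f},\mcalTh)$ on the right-hand side; these are harmless since they are nonnegative, so the bound $\eta_h^2 \lesssim \bm{e_u}^2 \le \bm{e_u}^2 + e_p^2 + {\rm osc}^2(\bm{f},\mcalTh)$ follows trivially. The main (and essentially only) obstacle is the finite-overlap counting argument for the jump term, which is entirely standard under the stated mesh regularity; everything else is bookkeeping. No new estimates beyond the Local lower bound theorem and the mesh-regularity hypothesis are required.
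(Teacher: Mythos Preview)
Your proposal is correct and matches the paper's approach exactly: the paper simply states that the global lower bound follows by summing the local lower bounds over $T\in\mcalTh$, and your write-up supplies precisely the finite-overlap counting argument needed for the jump term together with the trivial observation that $e_p^2$ and ${\rm osc}^2(\bm{f},\mcalTh)$ are nonnegative.
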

	\comment{
		\begin{proof}
			Clearly, from the definitions \eqref{sec_model_eq26_1} and \eqref{sec_post_eq2} of $ \bm{e_u} $ and $ \eta_{s,h} $, respectively, one has $ \eta_{s,h}^2 \leq \bm{e_u}^2 $. With the fact that $ \nabla\cdot\bm{u} = 0 $, we get 
			\begin{align*}
				\eta_{d,h}^2 = \nu \sum_{T\in\mcalT_h}\dnorm{\nabla\cdot\bm{r}_T^{k+1}\hhoVecvars{u}_T}_T^2  = \nu \sum_{T\in\mcalT_h}\dnorm{\nabla\cdot(\bm{u} - \bm{r}_T^{k+1}\hhoVecvars{u}_T)}_T^2 \leq \nu \sum_{T\in\mcalT_h}\dnorm{\nabla(\bm{u} - \bm{r}_T^{k+1}\hhoVecvars{u}_T)}_T^2.
			\end{align*}
			
			It remains to bound $ \eta_{J,h}^2 =  \nu\sum_{F\in\mcalF_T}h_F^{-1}\dnorm{\jump{\bm{r}_h^{k+1}\hhoVecvars{u}_h}_F}_F^2 $. Noting that, for all $ F\in\mcalFh^i $ with bordering elements $ T_1 $ and $ T_2 $, $ \jump{\bm{r}_h^{k+1}\hhoVecvars{u}_h}_F = (\bm{r}_{T_1}^{k+1}\hhoVecvars{u}_{T_1} - \bm{u}_F) - (\bm{r}_{T_2}^{k+1}\hhoVecvars{u}_{T_2} - \bm{u}_F) $ and, for all $ F\in\mcalFh^b $ with bordering element $ T $, $ \jump{\bm{r}_h^{k+1}\hhoVecvars{u}_h}_F = \bm{r}_{T}^{k+1}\hhoVecvars{u}_{T} - \bm{u}_F $ owing to $ \bm{u}_F=\bm{0} $. Similarly, for all $ \bm{u}\in\sqd{H_0^1(\Omega)} $ and all $ F\in\mcalFh $, $ \jump{\bm{r}_h^{k+1}\hhoVecvars{u}_h}_F = \jump{\bm{r}_h^{k+1}\hhoVecvars{u}_h - \bm{u}}_F $. Inserting $ \bm{\pi}_F^{0,k}\jump{\bm{r}_h^{k+1}\hhoVecvars{u}_h}_F - \bm{\pi}_F^{0,k}\jump{\bm{r}_h^{k+1}\hhoVecvars{u}_h - \bm{u}}_F = 0 $ and using the triangle inequality, we obtain
			\begin{align*}
				\nu\sum_{F\in\mcalF_h}h_F^{-1}\dnorm{\jump{\bm{r}_h^{k+1}\hhoVecvars{u}_h}_F}_F^2 &\leq  2\nu\sum_{T\in\mcalTh} \sum_{F\in\mcalF_T}h_F^{-1}\dnorm{\jump{\bm{r}_h^{k+1}\hhoVecvars{u}_h}_F}_F^2 \\
				& \leq 4\nu\sum_{T\in\mcalTh} \sum_{F\in\mcalF_T}h_F^{-1} \big( \dnorm{ \jump{\bm{r}_h^{k+1}\hhoVecvars{u}_h - \bm{u}}_F  - \bm{\pi}_F^{0,k} \jump{ \bm{r}_h^{k+1}\hhoVecvars{u}_h-\bm{u} }_F }_F^2 + \dnorm{\bm{\pi}_F^{0,k}\jump{\bm{r}_h^{k+1}\hhoVecvars{u}_h}_F}^2_F  \big) \\
				& \leq 8 \nu\sum_{T\in\mcalTh} \sum_{F\in\mcalF_T} h_F^{-1}\dnorm{(\bm{r}_{T}^{k+1}\hhoVecvars{u}_{T}-\bm{u} )-\bm{\pi}_F^{0,k}(\bm{r}_{T}^{k+1}\hhoVecvars{u}_{T}-\bm{u} )}_F^2  \\ 
				&\hspace{1.1em} + 8 \nu\sum_{T\in\mcalTh} \sum_{F\in\mcalF_T} h_F^{-1}\dnorm{\bm{\pi}_F^{0,k}(\bm{r}_{T}^{k+1}\hhoVecvars{u}_{T} - \bm{u}_F) }_F^2 \\
				&\lesssim \nu\sum_{T\in\mcalT_h}\dnorm{\nabla(\bm{u} - \bm{r}_T^{k+1}\hhoVecvars{u}_T)}_T^2 + \nu s_h(\hhoVecvars{u}_h,\hhoVecvars{u}_h),
			\end{align*}
			where we have used the orthogonal approximation property \eqref{sec_model_eq16_1} and the definition of $ s_h $ to pass to the last second line.
			
			By adding the above estimations together, the conclusion is now straightforward.
			\qedhere
		\end{proof}
	}

	\section{Nonhomogeneous Dirichlet boundary condition}
	In this section, we give a short comment for the nonhomogeneous Dirichlet boundary condition. Except for the boundary conditions, we keep the same settings as before, now, we consider the following problem
	\begin{subequations}\label{sec_nonDir_eq1}
		\begin{align}
			\label{sec_nonDir_eq1a}
			-\nu \Delta \bm{u} + \nabla p &= \bm{f} \quad \text{in}\ \Omega, \\ 
			\label{sec_nonDir_eq1b}
			\nabla\cdot\bm{u} &= 0 \quad \,\text{in}\ \Omega, \\ 
			\label{sec_nonDir_eq1c}
			\bm{u} &= \bm{g} \ \,\text{on}\ \partial \Omega, 
		\end{align}
	\end{subequations}
	where $ \bm{g} $ satisfies the compatibility condition $ \int_{\partial\Omega} \bm{g} \cdot \bm{n} = 0 $ ($ \bm{n} $ is the unit vector normal to $ \partial \Omega $). 
	
	Consider $ \bm{u}_D\in \sqd{H^1(\Omega)} $, satisfying 
	\begin{subequations}\label{sec_nonDir_eq2}
		\begin{align}
			\nabla\cdot\bm{u}_D &= 0 \quad \text{in}\ \Omega, \\
			\bm{u}_D &= \bm{g} \quad \text{on}\ \partial \Omega.
		\end{align}
	\end{subequations}
	According to the Helmholtz decomposition of a vector field in $ \sqd{L^2(\Omega)} $, the solution of \eqref{sec_nonDir_eq2} exists. Then, a weak solution $ (\bm{u},p)\in [H^1(\Omega)]^d\times Q $ to problem \eqref{sec_nonDir_eq1} can be obtained as $ \bm{u} = \bm{u}_0 + \bm{u}_D $, where $ \bm{u}_0\in \bm{V} $ is such that
	\begin{align*}
		\nu (\nabla\bm{u}_0, \nabla\bm{v}) - (\nabla\cdot\bm{v}, p) &= (\bm{f},\bm{v}) - \nu(\nabla \bm{u}_D,\nabla\bm{v}) \quad \forall \bm{v}\in \bm{V}, \\
		(\nabla\cdot\bm{u}_0, q) &= 0 \quad \forall q \in Q.
	\end{align*}
	
	Next, we consider the HHO solution for discrete problem \eqref{sec_model_eq25}. Let $ \hhoVecvars{u}_{h,D} := ((\bm{u}_{T,D})_{T\in\mcalT_h}, (\bm{u}_{F,D})_{F\in\mcalF_h})\in\hhoVecvars{V}_h^k $ be such that
	\begin{align*}
		\bm{u}_{T,D} = \bm{0} \ \ \forall T\in\mcalT_h, \quad \bm{u}_{F,D} = \bm{\pi}_F^{0,k} \bm{g} \ \ \forall F\in\mcalF_h^b, \quad \bm{u}_{F,D} = \bm{0} \ \ \forall F\in \mcalF_h^i.
	\end{align*}
	Then, the HHO solution $ (\hhoVecvars{u}_h, p_h)\in \hhoVecvars{V}_h^k\times Q_h $ is obtained as $ \hhoVecvars{u}_h = \hhoVecvars{u}_{h,0} + \hhoVecvars{u}_{h,D} $ with $ \hhoVecvars{u}_{h,0}\in\hhoVecvars{V}_{h,0}^k $ such that 
	\begin{subequations}\label{sec_nonDir_eq3}
		\begin{align}
			\label{sec_nonDir_eq3a}
			\nu a_h(\hhoVecvars{u}_{h,0}, \hhoVecvars{v}_h) + b_h(\hhoVecvars{v}_h, p_h) &= (\bm{f}, \bm{v}_h) - \nu a_h(\hhoVecvars{u}_{h,D}, \hhoVecvars{v}_h) \quad \forall \hhoVecvars{v}_h\in\hhoVecvars{V}_{h,0}^k, \\
			\label{sec_nonDir_eq3b}
			-b_h(\hhoVecvars{u}_{h,0}, q_h) &= 0 \hspace{3.5em} \forall q_h \in Q_h^k. 
		\end{align}
	\end{subequations}

	\section{Numerical examples}\label{num_exams}
	In this section, we shall present numerical results to demonstrate the accuracy of the theoretical estimates in the previous sections and the effectiveness of the proposed estimator. The eﬀectivity index is defined as  
	\begin{align}\label{sec_NE_eq1}
		\text{eff}:= \left( \frac{\bm{e_u}^2 + e_p^2}{\eta_h^2} \right)^{\frac{1}{2}}.
	\end{align}
	The constant behavior of the eﬀectivity index will be checked in the following numerical examples, which shows that the constants of equivalence between exact and estimated errors are independent of the meshsize.
	
	The mesh elements marked strategy follows from the D\"{o}rfler criterion 
	\begin{align}\label{sec_NE_eq2}
		\sum_{T\in\mathcal{M}_h} \eta_T^2 \geq \theta \sum_{T\in\mcalTh}\eta_T^2, \quad 0 < \theta < 1,
	\end{align}
	in which, the elements with larger error indicators are put into the marked set $ \mathcal{M}_h $ until the condition \eqref{sec_NE_eq2} is satisfied. Then the marked elements are refined to obtain a new mesh by connecting the barycenter of each element to the mid-point of each edge corresponding to this element. This method was also used in \cite{2017_Cangiani_posteri,2019_SIAM_BaoWG}. We should also note that, in the above mentioned refinement, all the marked elements will be refined into several quadrilaterals. 
	
	The following numerical examples are implemented through FEALPy \cite{fealpy_publish}, and the marking parameter $ \theta $ is set to be $ 0.3 $. The convergence rates of errors with respect to degrees of freedom denoted by $ \sharp $Dof, and it is easy to find that $ h=\mathcal{O}(\sharp\text{Dof}^{-\frac{1}{2}}) $ for two-dimensional quasiuniform mesh.

	\subsection{Example 1}\label{num_example1}
	Let domain $ \Omega=(0,1)\times(0,1) $ and the exact solution be set as in \cite{2005_JSC_mixedDGpost}:
	\begin{align*}
		\bm{u} &= \begin{pmatrix}
			-e^x(y\cos(y)+\sin(y)) \\
			e^xy\sin(y)
		\end{pmatrix}, \\
		p &= 2e^x\sin(y) - ( 2(1-e)(\cos(1)-1) ).
	\end{align*}
	In this example, by fixing the polynomial order $ k=1 $ and $ \nu = 1 $, we test the different initial meshes as exhibited in Figure \ref{exam1_mesh}, and the generation of these meshes has been described in detail in \cite{2020_JCP_ZhangSD}. Since the analytical solution is smooth, we expect an optimal convergence in terms $ \eta_h $, $ \bm{e_u} $ and $ e_p $. The eﬀectivity index, errors and the corresponding convergence orders on different meshes are listed in Table \ref{numExam1_tb_allmesh1}. From the table, we can observe that the orders of $ \eta_h $, $ \bm{e_u} $ and $ e_p $ converge to the half of $ 2 $ (i.e. $ k+1 $) with respect to the degrees of freedom and, the efﬁciency index is close to constant $ 1 $. These results agree with our analytical predictions.
	
	\begin{figure}[!ht]
		\begin{center}
			\subfigure[$ \mathcal{T}_{h}^{1} $]{
				\label{exam1_mesh1}
				\centering
				\includegraphics[width=1.5in]{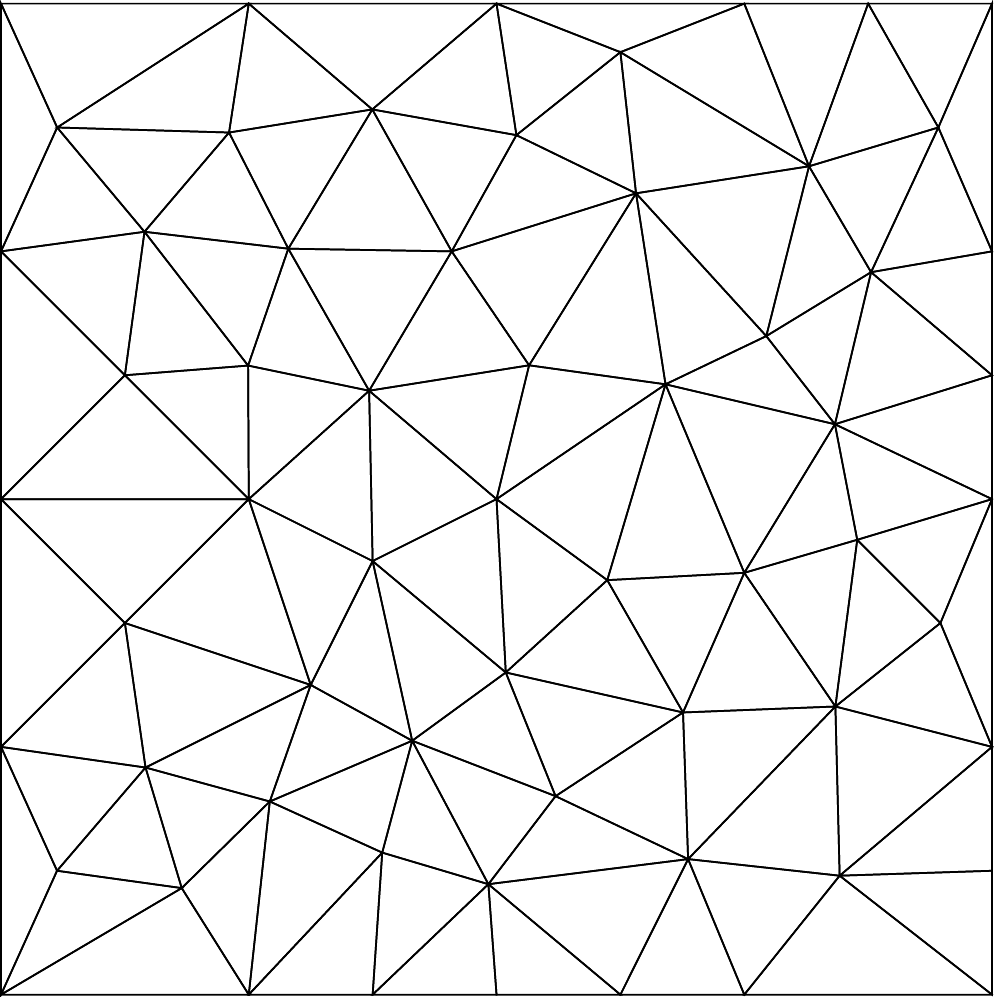}      
			}
			\subfigure[$ \mathcal{T}_{h}^{2} $ ]{
				\label{exam1_mesh2}
				\centering
				\includegraphics[width=1.5in]{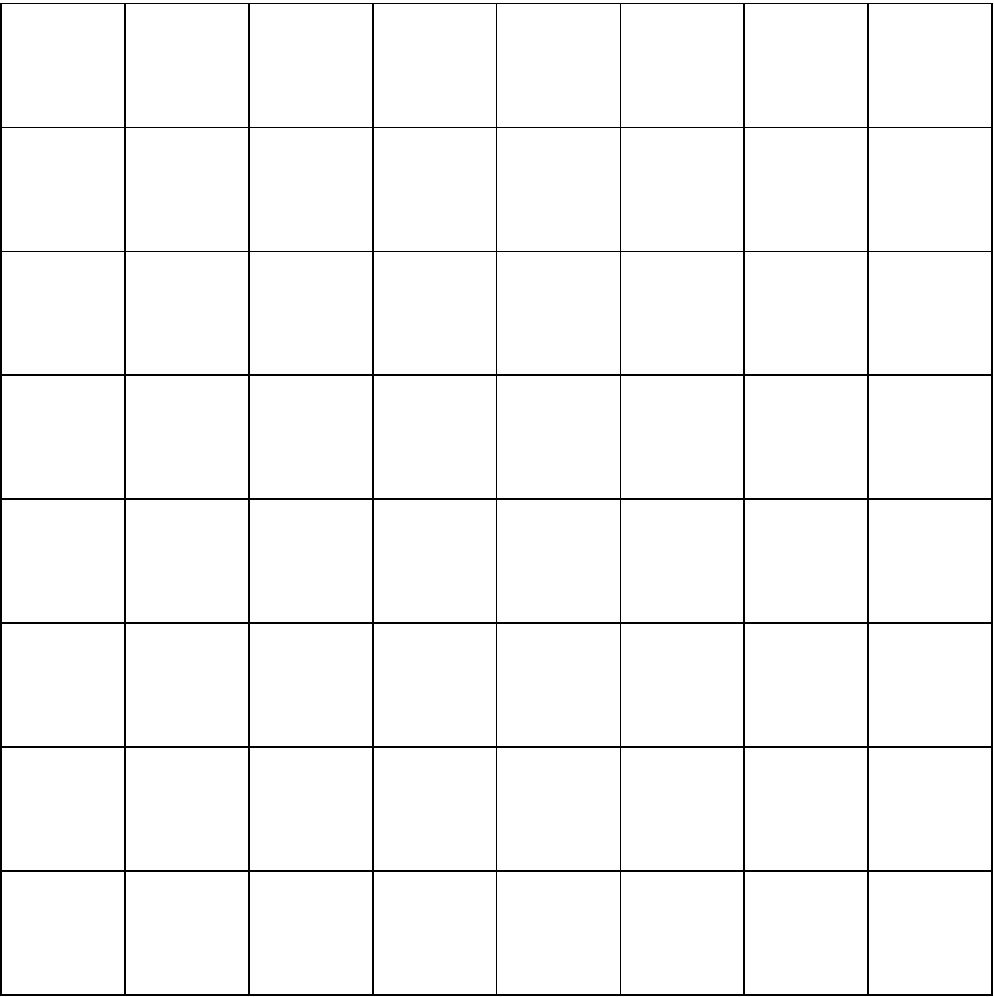}     
			}
			\subfigure[$ \mathcal{T}_{h}^{3} $]{
				\label{exam1_mesh3}
				\centering
				\includegraphics[width=1.5in]{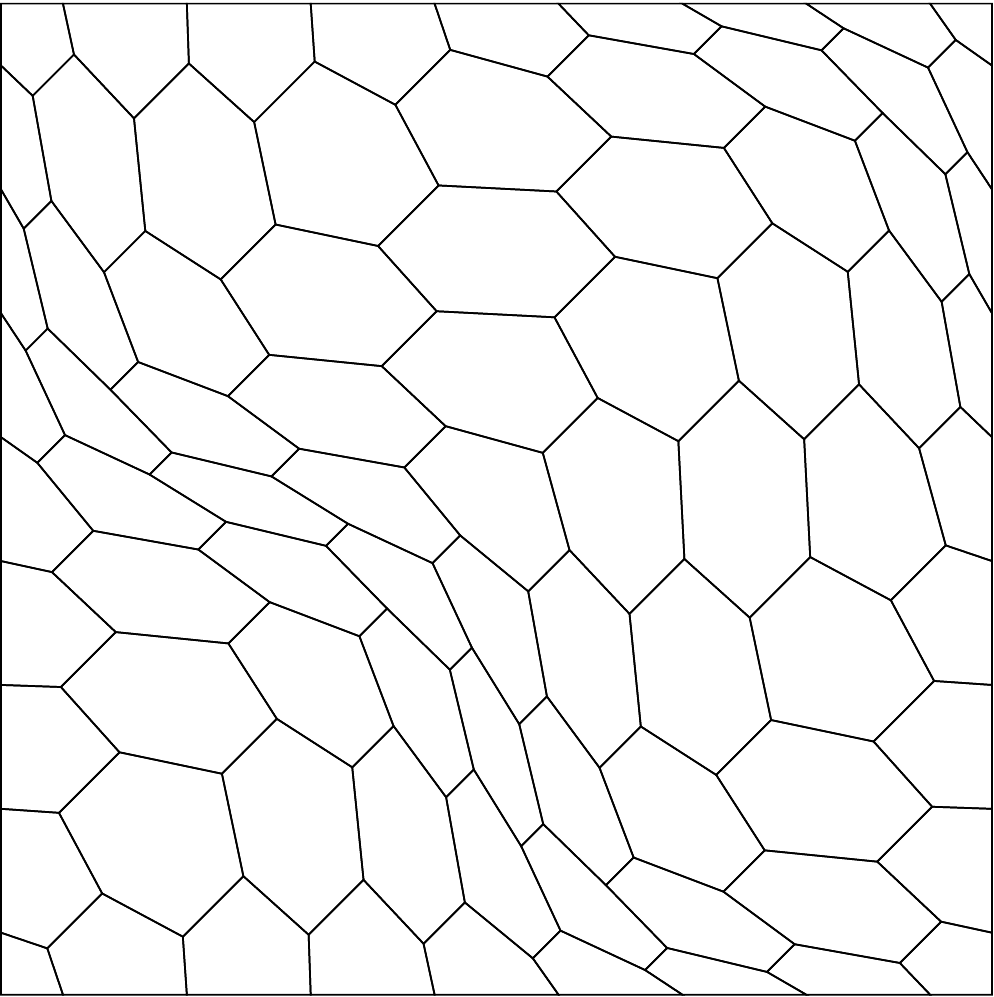}     
			}
			\subfigure[$ \mathcal{T}_{h}^{4} $]{
				\label{exam1_mesh4}
				\centering
				\includegraphics[width=1.5in]{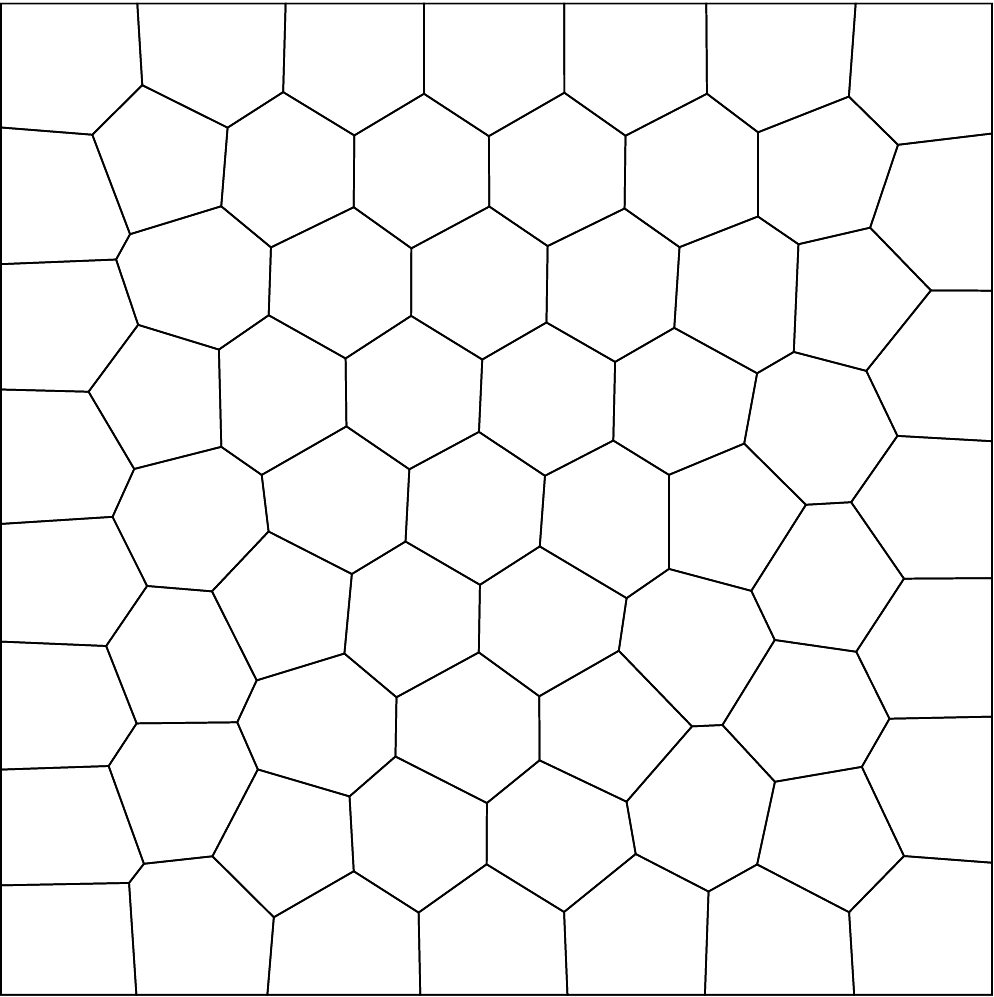}      
			}
		\end{center}
		\vspace{-1em}
		\caption{Example 1: Illustrations of meshes.} \label{exam1_mesh}
	\end{figure}

	\begin{table}[h!]
		\caption{Example 1: The errors for a series of the meshes $ \mathcal{T}_{h}^{1} - \mathcal{T}_{h}^{4} $.}
		\label{numExam1_tb_allmesh1}
		\medskip
		\centering
		\begin{tabular}[b]{ @{\  \ }c@{\ \ }| @{\ \ }  c  @{\ \ }  c  @{\ \ }  c  @{\ \ }  c  @{\ \ }  c  @{\ \ } | @{\ \  }  c  @{\ \  }  c  @{\ \ }  c  @{\  \ } | c  @{\  \ } }
			\hline
			$ \mathcal{T}_{h} $& $ \sharp\text{Dof} $& $ \eta_h $&  \text{order}&  $ \bm{e_u} $& \text{order}& $ \sharp\text{Dof} $&  $ e_p $& \text{order}&  \text{eff}  \\ 
			\hline
			\multirow{5}{*}{$ \mathcal{T}_{h}^{1} $}&  208&  2.0576e-02& --&  2.1886e-02&  --&  48&  4.2086e-03&  --&  1.0832 \\
			&  704&  6.3603e-03&  0.98&   6.7600e-03&  0.98&  144&  1.1752e-03&  1.06&   1.0788  \\
			&  2752&  1.7418e-03&  0.95&  1.8462e-03&  0.96&  576&  3.4250e-04&  0.99&   1.0780 \\
			&  10880&  4.6650e-04&  0.96&  4.9040e-04&  0.97&  2304&  7.1956e-05&  1.14&  1.0625 \\
			&  43264&  1.1819e-04&  0.99&  1.2493e-04&  0.99&  9216&  1.2948e-05&  1.24&  1.0627 \\
			\hline
			\multirow{5}{*}{$ \mathcal{T}_{h}^{2} $}&  256&  1.6132e-02& --&  1.6942e-02&  --&  48&  7.7473e-04&  --&  1.0513 \\
			&  960&  4.1441e-03&  1.02&   4.3824e-03&  1.01&  192&  2.2254e-04&  0.94&   1.0589  \\
			&  3712&  1.0488e-03&  1.01&  1.1121e-03&  1.01&  768&  4.6632e-05&  1.15&   1.0613  \\
			&  14592&  2.6381e-04&  1.00&  2.7994e-04&  1.00&  3072&  8.6231e-06&  1.23&  1.0616 \\
			&  57856&  6.6154e-05&  1.00&  7.0211e-05&  1.00&  12288&  1.5407e-06&  1.25&  1.0616 \\
			\hline
			\multirow{5}{*}{$ \mathcal{T}_{h}^{3} $}&  502&  1.3367e-02& --&  1.4352e-02&  --&  75&  1.6235e-03&  --&  1.0805 \\
			&  1984&  3.3322e-03&  0.98&   3.4187e-03&  1.01&  384&  7.0583e-04&  1.09&   1.0475  \\
			&  7552&  8.7864e-04&  0.99&  9.2568e-04&  0.97&  1536&  1.7979e-04&  1.01&   1.0732  \\
			&  29440&  2.3146e-04&  0.98&  2.4488e-04&  0.98&  6144&  3.6668e-05&  1.17&  1.0698 \\
			&  116224&  5.9664e-05&  0.99&  6.3129e-05&  0.99&  24576&  6.3977e-06&  1.26&  1.0635 \\
			\hline
			\multirow{5}{*}{$ \mathcal{T}_{h}^{4} $}&  502&  9.9420e-03& --&  1.0633e-02&  --&  75&  2.1951e-03&  --&  1.0901 \\
			&  2024&  2.5498e-03&  0.95&   2.5967e-03&  0.99&  396&  5.4604e-04&  1.00&   1.0407  \\
			&  7744&  6.7444e-04&  0.98&  7.0717e-04&  0.97&  1584&  1.4445e-04&  0.99&   1.0702  \\
			&  30272&  1.7793e-04&  0.98&  1.8735e-04&  0.97&  6336&  3.0179e-05&  1.14&  1.0665 \\
			&  119680&  4.5649e-05&  0.99&  4.8388e-05&  0.98&  25344&  5.3445e-06&  1.25&  1.0664 \\
			\hline 
		\end{tabular}
	\end{table}

	\subsection{Example 2}\label{num_example2}
	In this example, we investigate on the efﬁciency index and the errors $ \eta_h $, $ \bm{e_u} $, $ e_p $ on mesh $ \mcalT_h^2 $ by taking polynomial orders $ k = 0, 1, 2, 3 $ and $ \nu = 1, 10^{-1}, 10^{-3}, 10^{-6}, 10^{-10} $. Considering the following exact solution in $ \Omega = (0,1)\times(0,1) $:
	\begin{align*}
		\bm{u} &=  \begin{pmatrix}
			-\frac{1}{2}\cos^2(x)\cos(y)\sin(y)\\
			\frac{1}{2}\cos^2(y)\cos(x)\sin(x)
		\end{pmatrix}, \\
		p &= x^6-y^6.
	\end{align*}
	Firstly, we fix $ \nu = 1 $ to test the different polynomial orders $ k=0,1,2,3 $. The corresponding eﬀectivity index and errors are reported in Table \ref{numExam2_tb_1}. Then, we fix $ k=3 $ to test the viscosity constant $ \nu = 10^{-1}, 10^{-3}, 10^{-6}, 10^{-10} $ with Table \ref{numExam2_tb_2} showing the eﬀectivity index and errors. In particular, in Table \ref{numExam2_tb_2}, we can clearly observe that the eﬀectivity index is independent of $ \nu $, which is also consistent with the theoretical results.
	
	\begin{table}[h!]
		\caption{Example 2: The errors for different polynomial orders.}
		\label{numExam2_tb_1}
		\medskip
		\centering
		\begin{tabular}[b]{ @{\  \ }c@{\ \ }| @{\ \ }  c  @{\ \ }  c  @{\ \ }  c  @{\ \ }  c  @{\ \ }  c  @{\ \ } | @{\ \  }  c  @{\ \  }  c  @{\ \ }  c  @{\  \ } | c  @{\  \ } }
			\hline
			$ k $& $ \sharp\text{Dof} $& $ \eta_h $&  \text{order}&  $ \bm{e_u} $& \text{order}& $ \sharp\text{Dof} $&  $ e_p $& \text{order}&  \text{eff}  \\ 
			\hline
			\multirow{5}{*}{$ 0 $}&  112&  3.9460e-01& --&  2.1886e-02&  --&  16&  4.9979e-02&  --&  0.7516 \\
			&  416&  2.7629e-01&  0.50&   2.2661e-01&  0.42&  64&  3.1702e-02&  0.35&   0.8282  \\
			&  1600&  1.3385e-01&  0.54&  1.1926e-01&  0.48&  256&  1.5324e-02&  0.54&   0.8983 \\
			&  6272&  6.4573e-02&  0.53&  6.0779e-02&  0.49&  1024&  6.4664e-03&  0.63&  0.9466 \\
			&  24832&  3.1510e-02&  0.52&  3.0605e-02&  0.50&  4096&  2.3716e-03&  0.73&  0.9742 \\
			\hline
			\multirow{5}{*}{$ 1 $}&  256&  1.0040e-01& --&  9.9698e-02&  --&  48&  6.5437e-03&  --&  0.9952 \\
			&  960&  2.6633e-02&  1.00&   2.6573e-02&  1.00&  192&  8.1796e-04&  1.57&   0.9982  \\
			&  3712&  6.7878e-03&  1.01&  6.7828e-03&  1.01&  768&  1.0243e-04&  1.54&   0.9994  \\
			&  14592&  1.7080e-03&  1.01&  1.7085e-03&  1.00&  3072&  1.3629e-05&  1.47&  1.0003 \\
			&  57856&  4.2805e-04&  1.00&  4.2841e-04&  1.00&  12288&  1.9831e-06&  1.40&  1.0008 \\
			\hline
			\multirow{5}{*}{$ 2 $}&  432&  1.1121e-02& --&  1.4352e-02&  --&  96&  4.2838e-04&  --&  1.0004 \\
			&  1632&  1.4534e-03&  1.53&   1.4547e-03&  1.53&  384&  3.8484e-05&  1.81&   1.0012  \\
			&  6336&  1.8428e-04&  1.52&  1.8444e-04&  1.52&  1536&  3.4632e-06&  1.78&   1.0010  \\
			&  24960&  2.3151e-05&  1.51&  2.3170e-05&  1.52&  6144&  3.0966e-07&  1.76&  1.0001 \\
			&  99072&  2.8866e-06&  1.51&  2.8890e-06&  1.51&  24576&  2.7553e-08&  1.75&  1.0001 \\
			\hline
			\multirow{5}{*}{$ 3 $}&  640&  7.1488e-04& --&  7.1483e-04&  --&  160&  1.6366e-05&  --&  1.0002 \\
			&  2432&  4.5901e-05&  2.06&   4.5933e-05&  2.06&  640&  6.9111e-07&  2.37&   1.0001  \\
			&  9472&  2.8953e-06&  2.03&  2.8974e-06&  2.03&  2560&  2.9867e-08&  2.31&   1.0001  \\
			&  37376&  1.8756e-07&  1.99&  1.8669e-07&  2.00&  10240&  1.3005e-09&  2.28&  1.0001 \\
			&  148480&  1.1801e-08&  2.00&  1.1798e-08&  2.00&  40960&  5.7773e-11&  2.26&  0.9999 \\
			\hline 
		\end{tabular}
	\end{table}

	\begin{table}[h!]
		\caption{Example 2: The errors for $ \nu = 10^{-1}, 10^{-3}, 10^{-6}, 10^{-10} $.}
		\label{numExam2_tb_2}
		\medskip
		\centering
		\begin{tabular}[b]{ @{\  \ }c@{\ \ }| @{\ \ }  c  @{\ \ }  c  @{\ \ }  c  @{\ \ }  c  @{\ \ }  c  @{\ \ }  c  @{\  \ } | c  @{\  \ } }
			\hline
			$ \nu $& $ \eta_h $&  \text{order}&  $ \bm{e_u} $& \text{order}& $ e_p $& \text{order}&  \text{eff}  \\ 
			\hline
			\multirow{5}{*}{$ 10^{-1} $}&  2.2724e-03& --&  2.2705e-03&  --&  4.8715e-05&  --&  0.9994 \\
			&  1.4571e-04&  2.06&   1.4570e-04&  2.06&  1.9981e-06&  2.39&   1.0000  \\
			&  9.1855e-06&  2.03&  9.1853e-06&  2.03&  8.4724e-08&  2.32&   1.0000 \\
			&  5.7611e-07&  2.02&  5.7611e-07&  2.02&  3.6568e-09&  2.29&  1.0000 \\
			&  3.6082e-08&  2.01&  3.6097e-08&  2.01&  1.5975e-10&  2.27&  1.0004 \\
			\hline
			\multirow{5}{*}{$ 10^{-3} $}&  2.2756e-02& --&  2.2737e-02&  --&  4.8543e-04&  --&  0.9994 \\
			&  1.4589e-03&  2.06&   1.4588e-03&  2.06&  1.9905e-05&  2.39&   1.0000  \\
			&  9.1963e-05&  2.03&  9.1961e-05&  2.03&  8.4371e-07&  2.32&   1.0000 \\
			&  5.7660e-06&  2.02&  5.7659e-06&  2.02&  3.6406e-08&  2.29&  1.0000 \\
			&  3.6086e-07&  2.01&  3.6086e-07&  2.01&  1.5886e-09&  2.27&  1.0000 \\
			\hline
			\multirow{5}{*}{$ 10^{-6} $}&  7.1962e-01& --&  7.1901e-01&  --&  1.5350e-02&  --&  0.9994 \\
			&  4.6135e-02&  2.06&   4.6132e-02&  2.06&  6.2944e-04&  2.39&   1.0000  \\
			&  2.9082e-03&  2.03&  2.9081e-03&  2.03&  2.6679e-05&  2.32&   1.0000 \\
			&  1.8234e-04&  2.02&  1.8234e-04&  2.02&  1.1512e-06&  2.29&  1.0000 \\
			&  1.1411e-05&  2.01&  1.1411e-05&  2.01&  5.0235e-08&  2.27&  1.0000 \\
			\hline
			\multirow{5}{*}{$ 10^{-10} $}&  7.1962e+01& --&  7.1901e+01&  --&  1.5350e+00&  --&  0.9994 \\
			&  4.6135e+00&  2.06&   4.6132e+00&  2.06&  6.2944e-02&  2.39&   1.0000  \\
			&  2.9082e-01&  2.03&  2.9081e-01&  2.03&  2.6679e-03&  2.32&   1.0000 \\
			&  1.8234e-02&  2.02&  1.8234e-02&  2.02&  1.1512e-04&  2.29&  1.0000 \\
			&  1.1411e-03&  2.01&  1.1411e-03&  2.01&  5.0235e-06&  2.27&  1.0000 \\
			\hline
		\end{tabular}
	\end{table}

	\subsection{Example 3}\label{num_example3}
	In this example, we consider the model problem \eqref{sec_model_eq1} with singular solution on an L-shape domain $ \Omega=\big((-1,1)\times(-1,1)\big)\setminus \big([0,1)\times(-1,0]\big) $ (see, \cite{1989_NM_posterioriStokes}). The domain and the corresponding initial mesh are displayed in Figure \ref{exam3_mesh1} and Figure \ref{exam3_mesh2}, respectively. Here, we select the $ k=1, 2, 3, 4 $, $ \nu=1 $ and $ \bm{f}=\bm{0} $. Let $ (r, \theta) $ denote the system of polar coordinates, then, the velocity and pressure are set to be 
	\begin{align*}
		\bm{u} &=  r^\lambda \begin{pmatrix}
			(1+\lambda)\sin(\theta)\Psi(\theta) + \cos(\theta)\Psi'(\theta) \\
			\sin(\theta)\Psi'(\theta) - (1+\lambda)\cos(\theta)\Psi(\theta)
		\end{pmatrix}, \\
		p &= \frac{-r^{\lambda-1}\Big( (1+\lambda)^2\Psi'(\theta) + \Psi'''(\theta) \Big)}{ (1-\lambda)} ,
	\end{align*}
	where
	\begin{align*}
		&\Psi(\theta) = \frac{\sin((1+\lambda)\theta)\cos(\lambda \omega)}{1+\lambda} - \cos((1+\lambda)\theta) \\
		&\qquad\quad -\frac{\sin((1-\lambda)\theta)\cos(\lambda \omega)}{1-\lambda} + \cos((1-\lambda)\theta), \\
		& \omega = \frac{3\pi}{2}, \quad \lambda = \frac{856399}{1572564}.
	\end{align*}
	
	We know that $ (\bm{u}, p) $ is analytic in $ \overline{\Omega}\setminus \{(0,0)\} $ and, $ (\nabla\bm{u}, p) $ are singular at the origin. This example reflects the typical singular behavior of the solution of two-dimensional Stokes problem near the reentrant corners of the computational domain. We use a standard adaptive algorithm to resolve the singularity:
	\begin{center}
		Solve $ \rightarrow $ Estimate $ \rightarrow $ Mark $ \rightarrow $ Refine.
	\end{center}
	Given the initial mesh, we execute the above procedure to obtain a new mesh, which is called one iteration. This process is stopped until the estimator $ \eta_h $ is less than the stopping criterion $ tol $. 
	
	In this example, we take $ tol $ = 0.01. For the different polynomial orders, the $ \eta_h $, the number of iterations and degrees of freedom required to reach the tolerance are respectively displayed in Table \ref{numExam3_tb_1}, the refined meshes are plotted in Figure \ref{exam3_refine}. We see that the refinements are focused on the origin, i.e., the singularity at the origin is successfully captured by the refinement. The convergence results are reported in Figure \ref{exam3_rate}, where we observe that the error and estimator achieve the $ (k+1)/2 $ order convergence with respect to the degrees of freedom.
	
	\begin{figure}[!ht]
		\begin{center}
			\subfigure[L-shaped domain $ \Omega $ ]{
				\label{exam3_mesh1}
				\centering
				\includegraphics[width=2in]{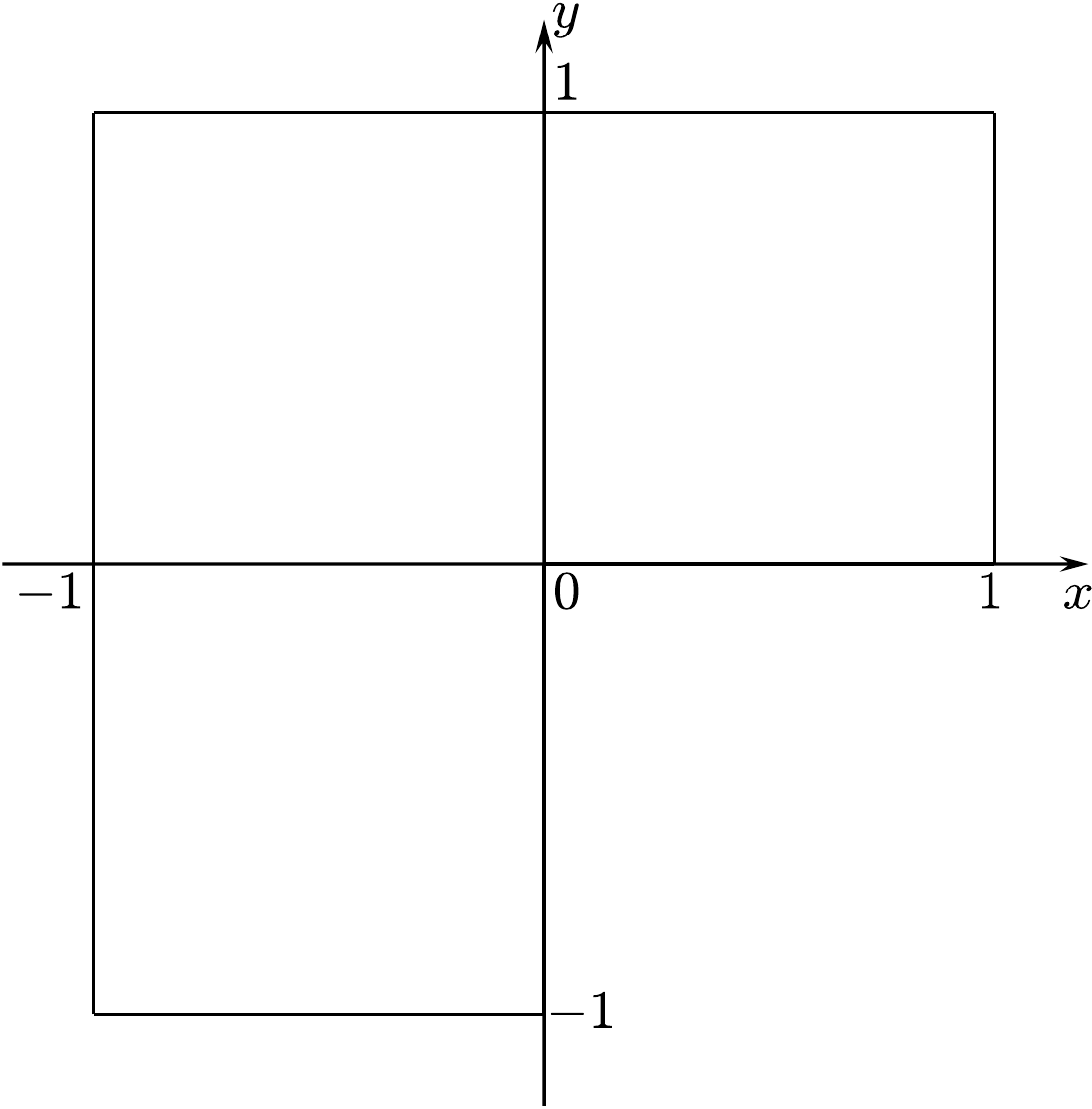}      
			}
			\qquad\qquad
			\subfigure[The $ \mathcal{T}_{h}^{4} $-like mesh]{
				\label{exam3_mesh2}
				\centering
				\includegraphics[scale=0.4,trim=120 43 110 45,clip]{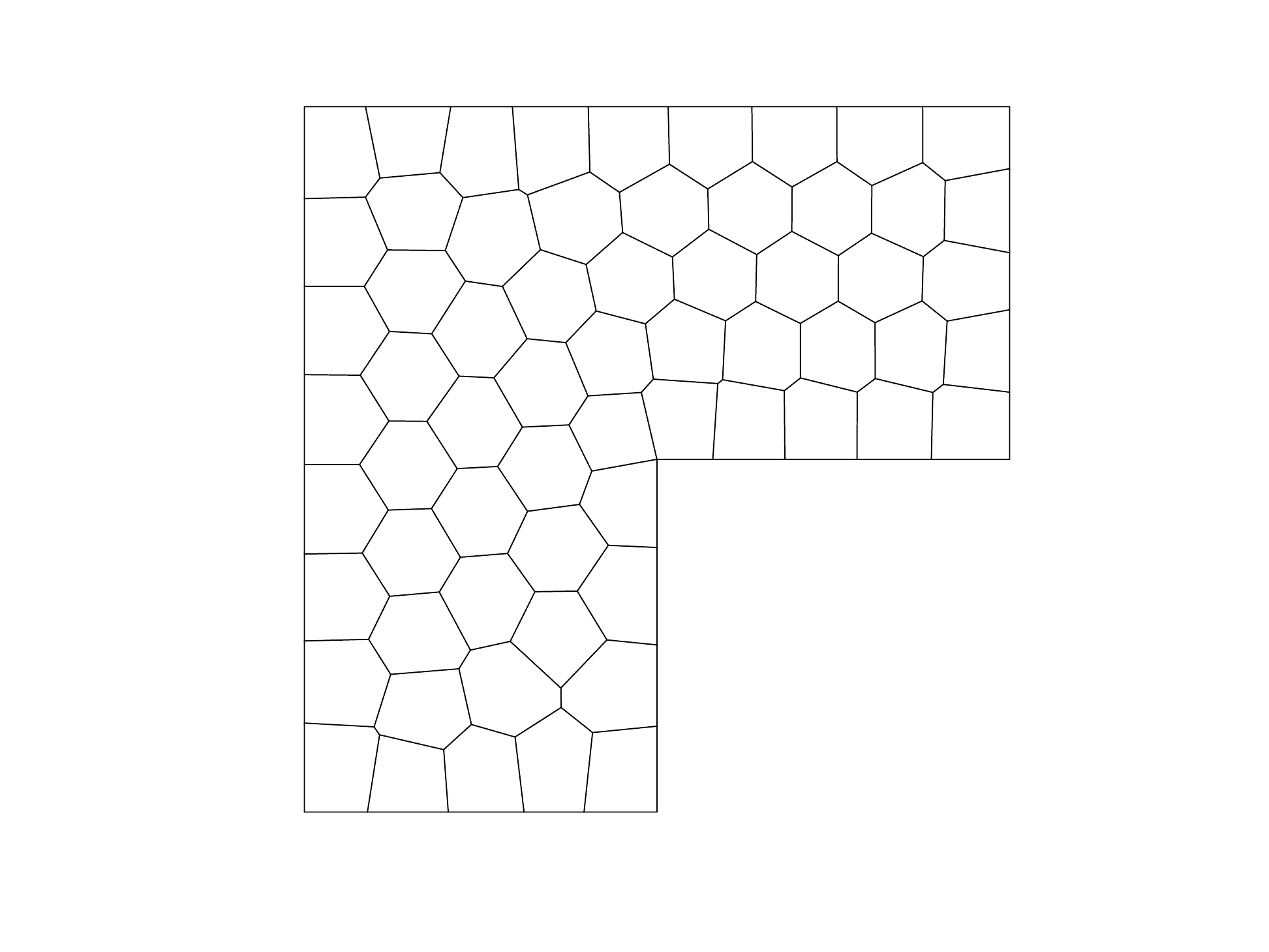}
			}
		\end{center}
		\vspace{-1.5em}
		\caption{Example 3: Illustrations of domain and mesh.} \label{exam3_domainmesh}
	\end{figure}

	\begin{table}[h!]
		\caption{Example 3: The required number of iterations to reach the tolerance $ tol=0.01 $.}
		\label{numExam3_tb_1}
		\medskip
		\centering
		\begin{tabular}[b]{ @{\  \ }c@{\ \ }| @{\ \ }  c  @{\ \ }  c  @{\ \ }  c  @{\ \ }  c  @{\ \ }  c  @{\ \ }  c  @{\  \ } }
			\hline
			$ k $& $ \eta_h $&  $ \text{Iterations} $&  $ \sharp\text{Dof} $ \\ 
			\hline
			1&  9.0836e-03&  25&   97126  \\
			\hline
			2&  7.9897e-03&  26&   19032  \\
			\hline
			3&  8.9269e-03&  28&   11108  \\
			\hline
			4&  8.4906e-03&  31&   10370  \\
			\hline
		\end{tabular}
	\end{table}

	\begin{figure}[!ht]
		\begin{center}
			\subfigure[$ k=1 $]{
				\label{exam3_refine_1}
				\centering
				\includegraphics[scale=0.5,trim=120 60 110 40,clip]{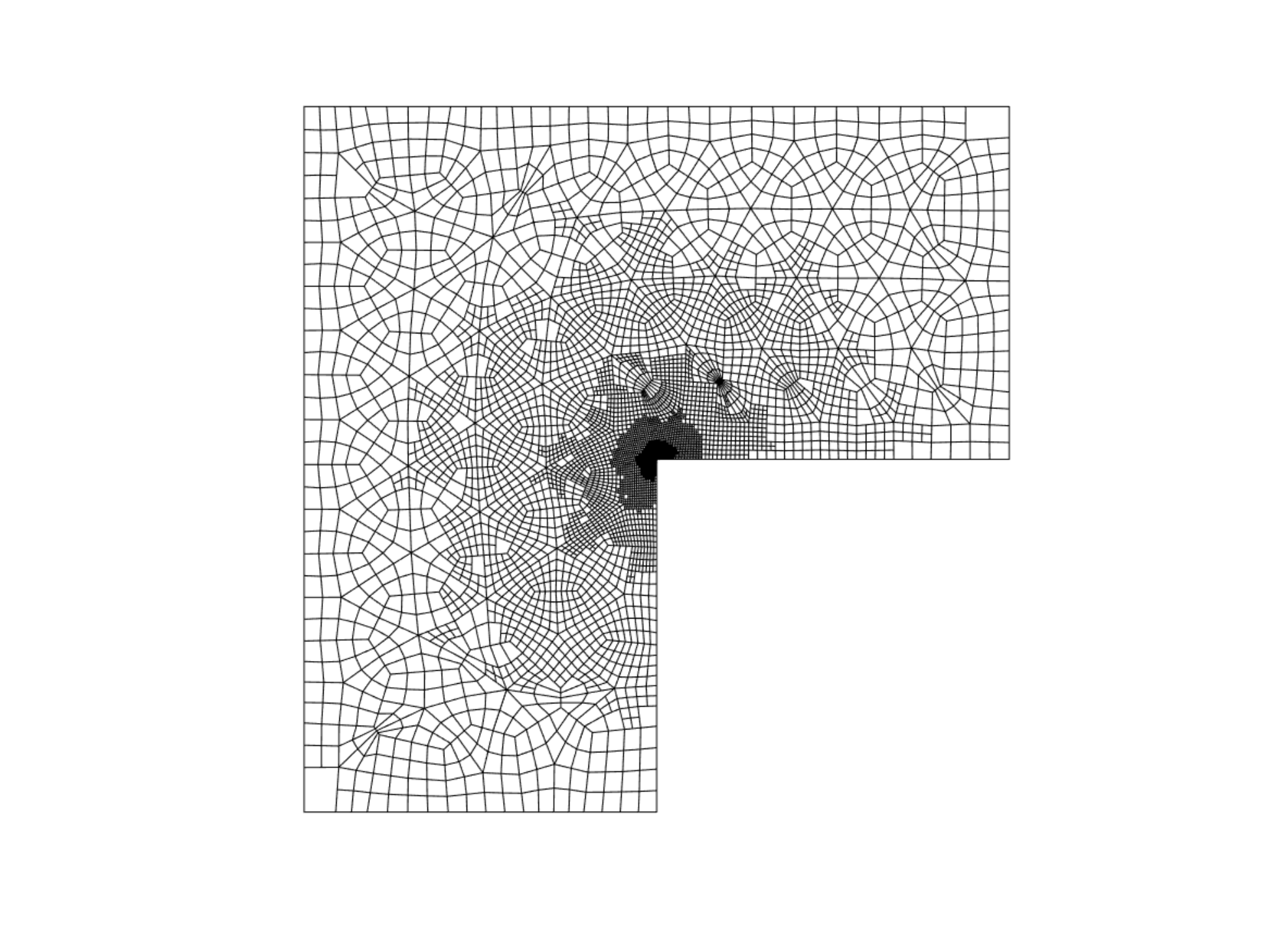}      
			}
			\subfigure[$ k=2 $ ]{
				\label{exam3_refine_2}
				\centering
				\includegraphics[scale=0.5,trim=120 60 110 40,clip]{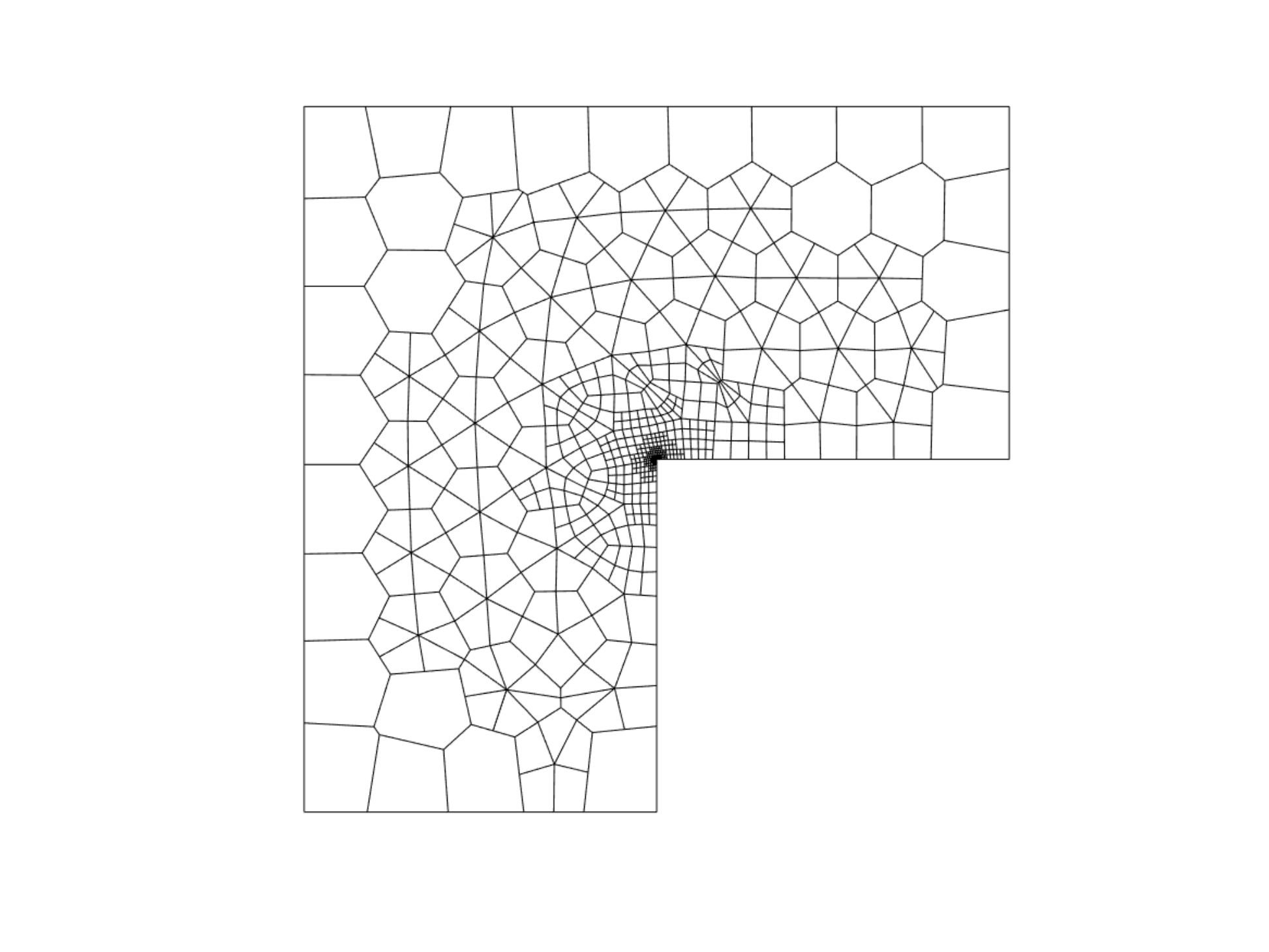}     
			}
			\subfigure[$ k=3 $]{
				\label{exam3_refine_3}
				\centering
				\includegraphics[scale=0.5,trim=120 60 110 40,clip]{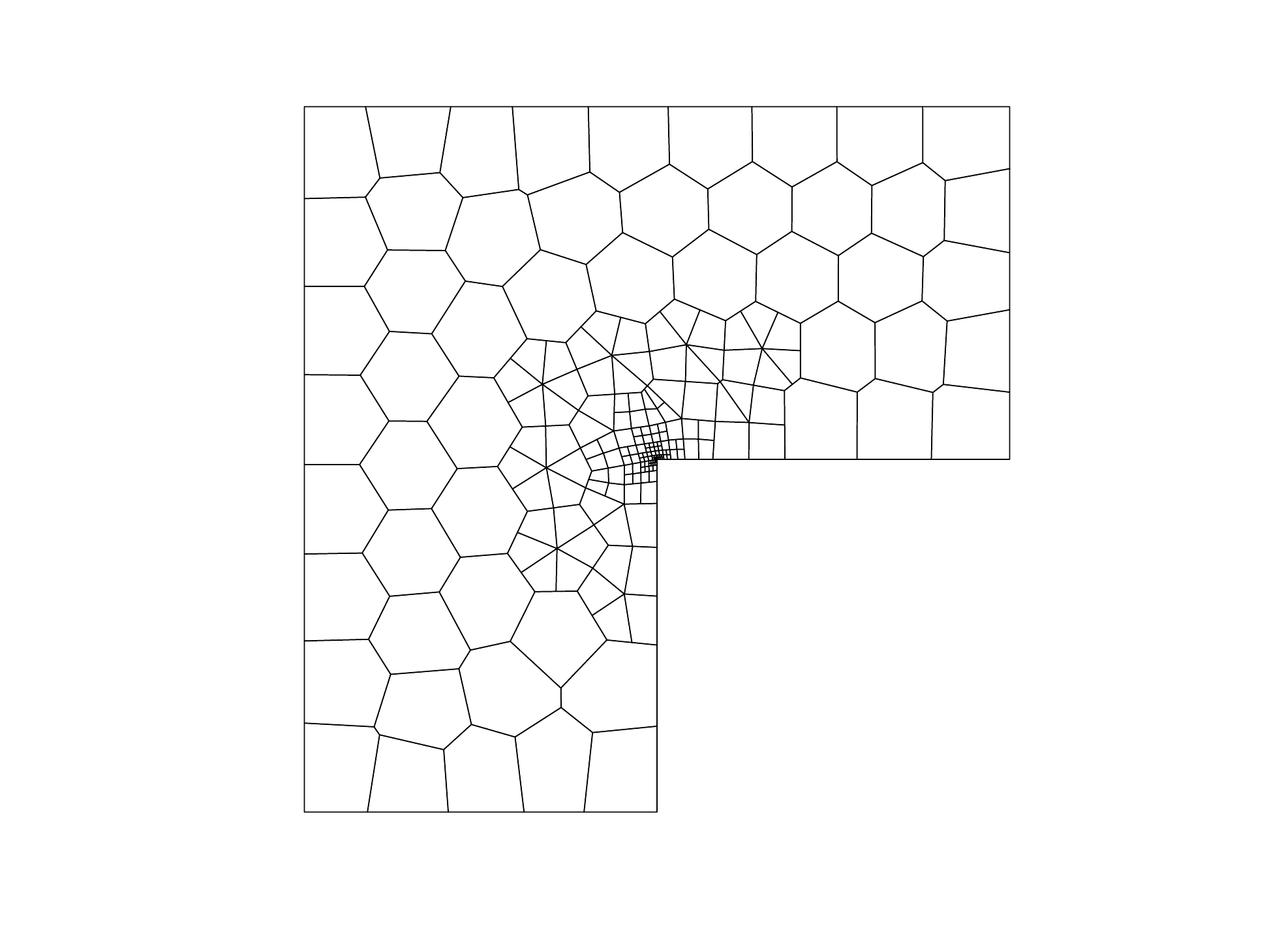}     
			}
			\subfigure[$ k=4 $]{
				\label{exam3_refine_4}
				\centering
				\includegraphics[scale=0.5,trim=120 60 110 40,clip]{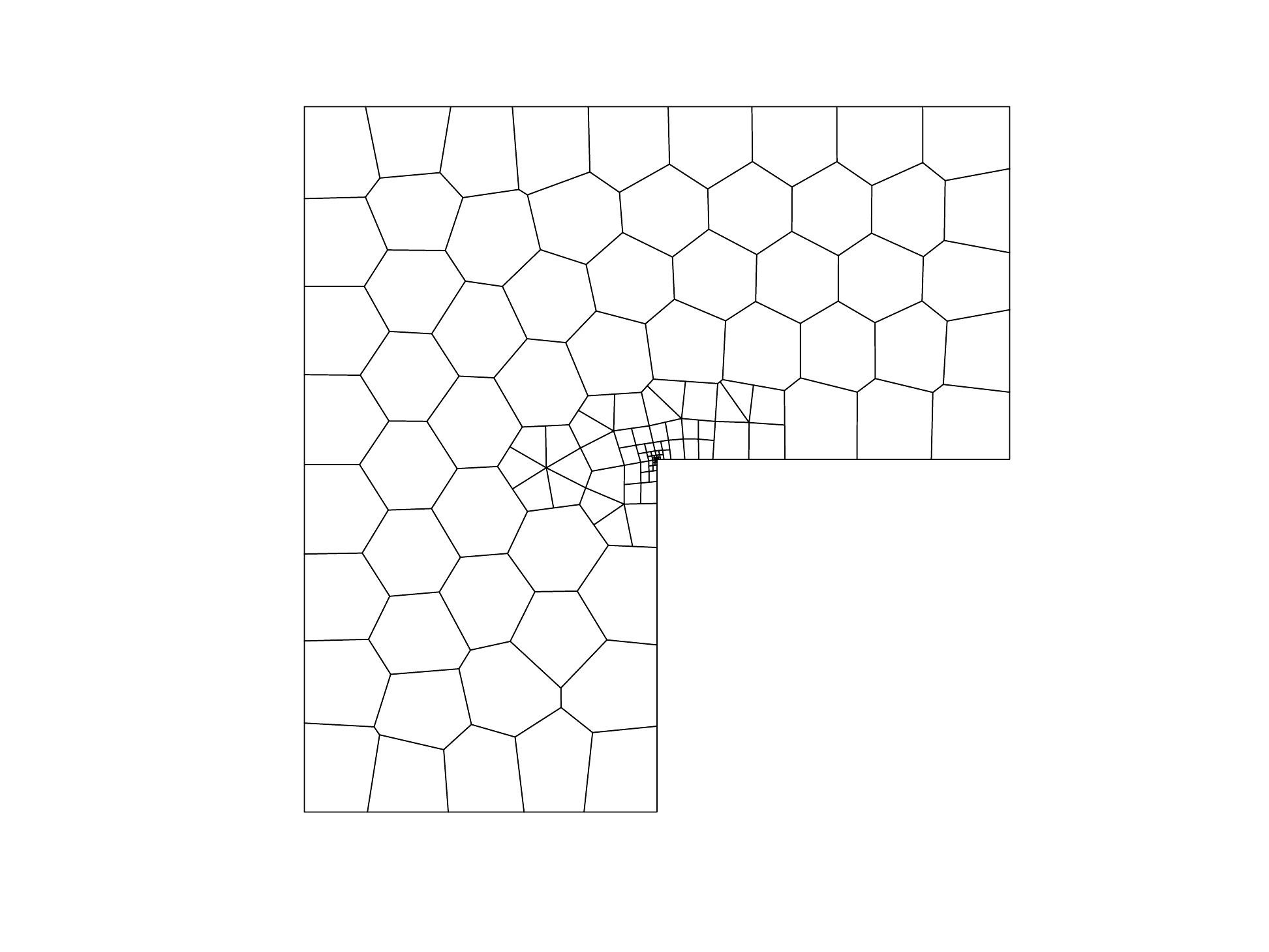}      
			}
		\end{center}
		\vspace{-1.5em}
		\caption{Example 3: The finally refined meshes for different polynomial orders.} \label{exam3_refine}
	\end{figure}

	\begin{figure}[!ht]
		\begin{center}
			\subfigure[$ k=1 $]{
				\label{exam3_rate_1}
				\centering
				\includegraphics[width=3in]{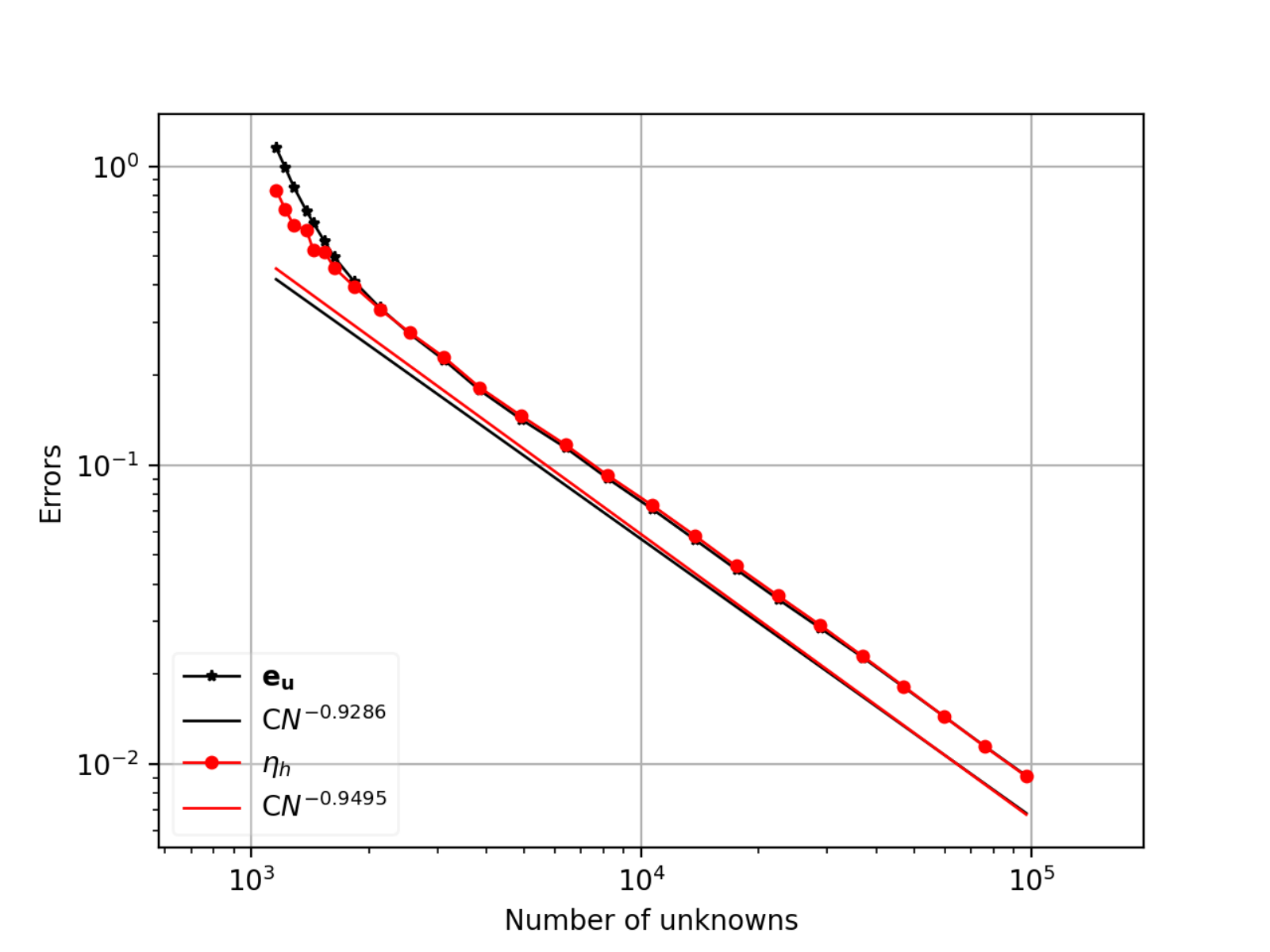}      
			}
			\subfigure[$ k=2 $ ]{
				\label{exam3_rate_2}
				\centering
				\includegraphics[width=3in]{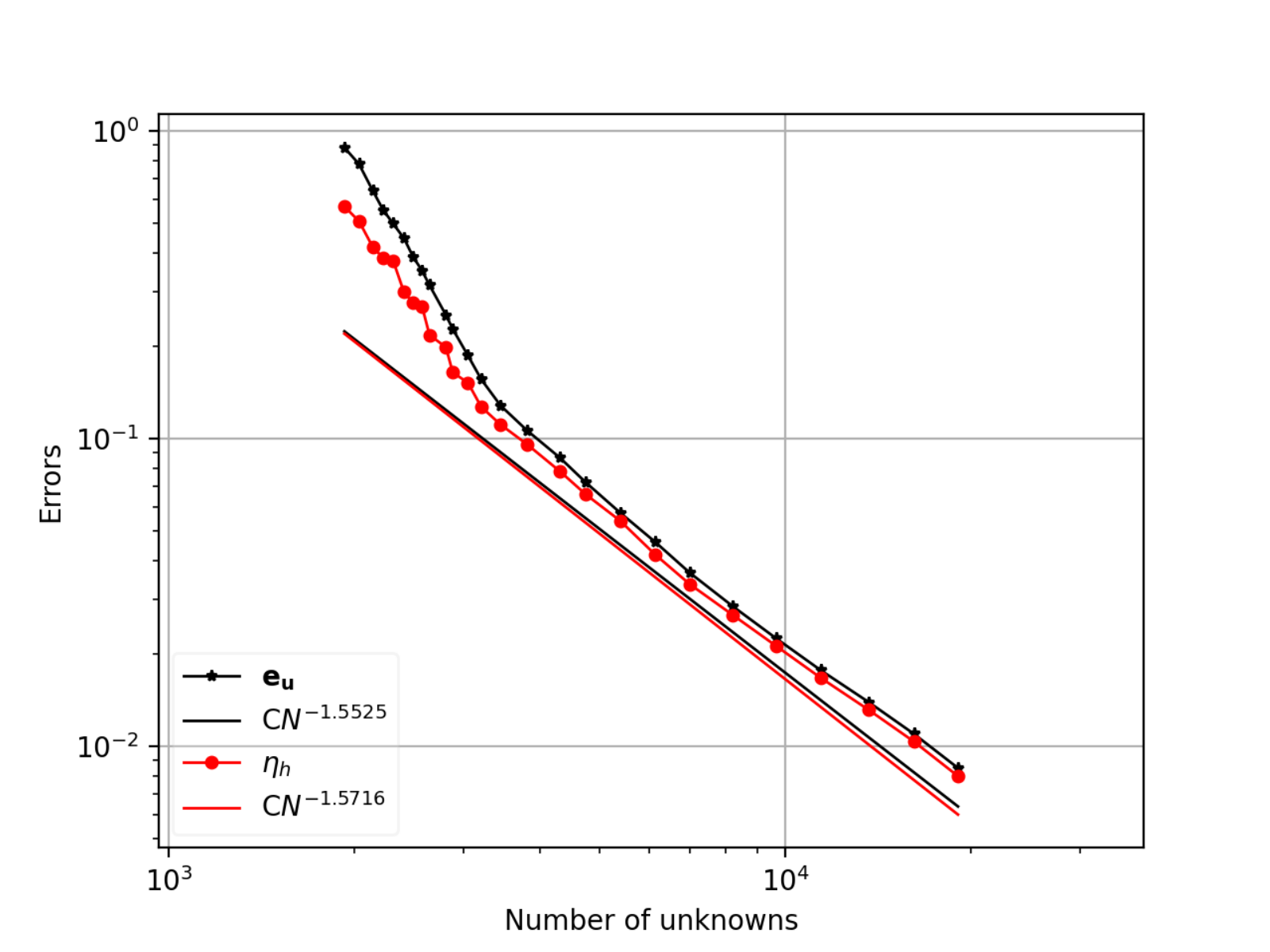}     
			}
			\subfigure[$ k=3 $]{
				\label{exam3_rate_3}
				\centering
				\includegraphics[width=3in]{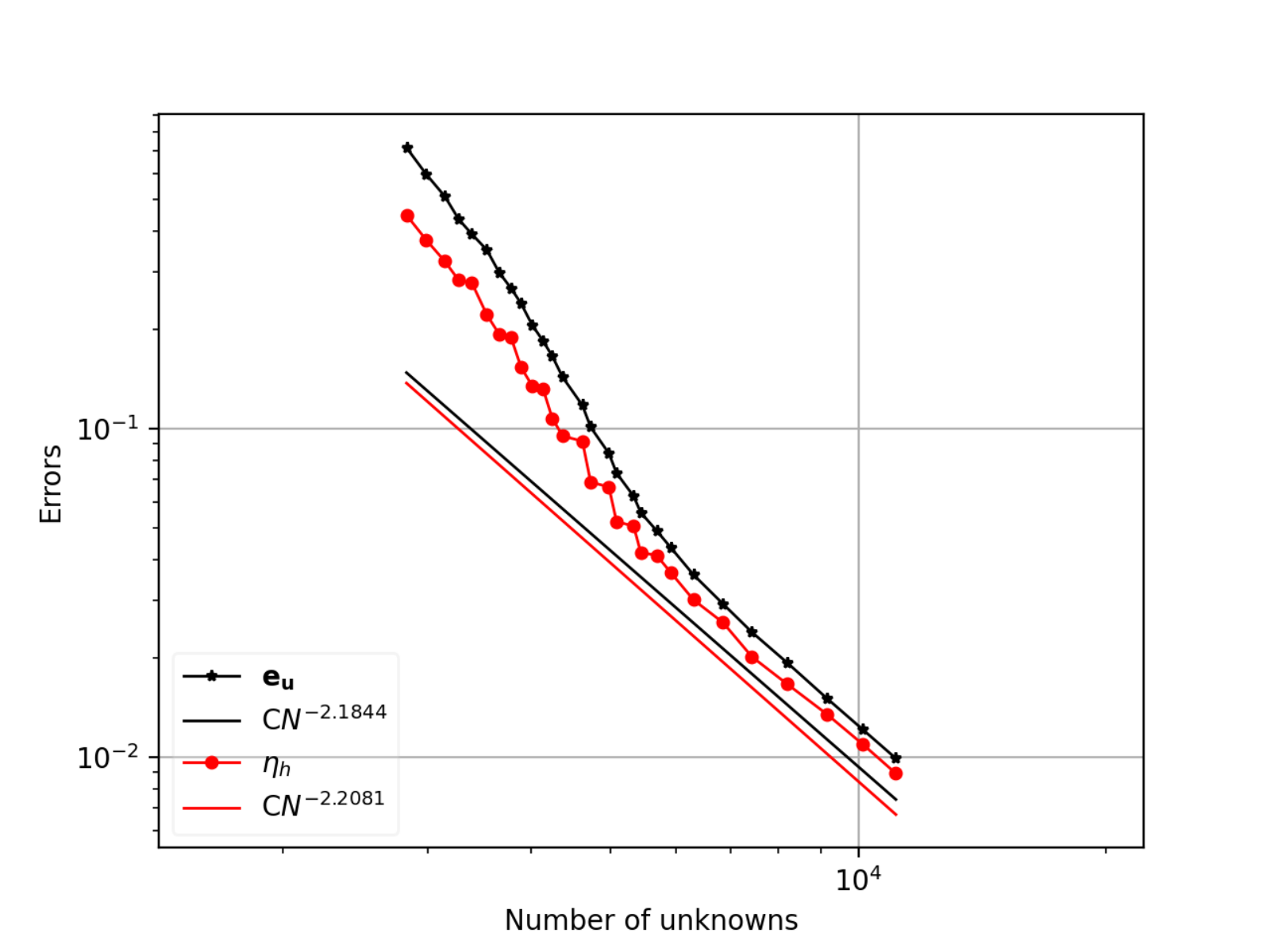}     
			}
			\subfigure[$ k=4 $]{
				\label{exam3_rate_4}
				\centering
				\includegraphics[width=3in]{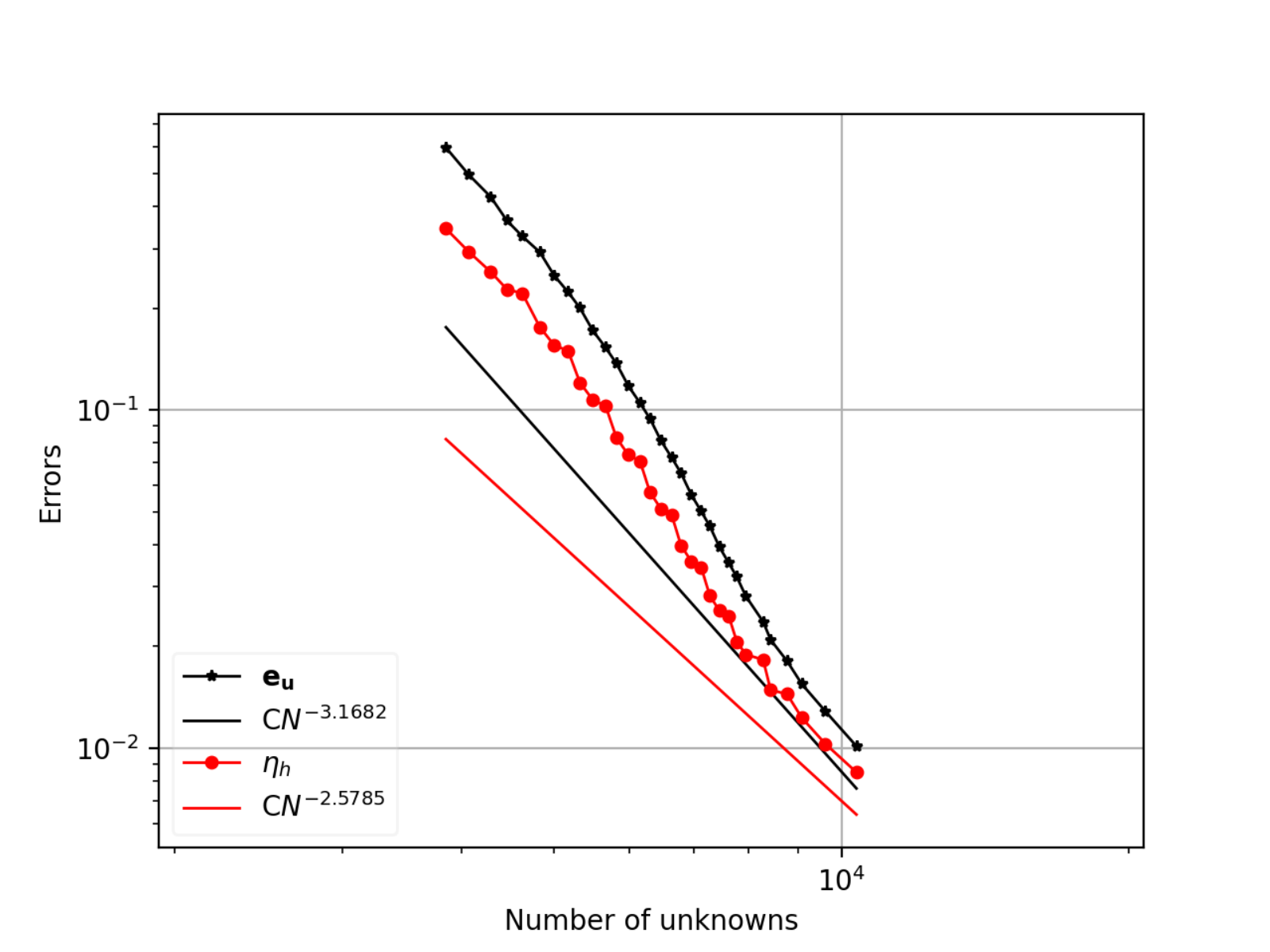}      
			}
		\end{center}
		\vspace{-1.5em}
		\caption{Example 3: The convergence results for different polynomial orders.} \label{exam3_rate}
	\end{figure}

	\subsection{Example 4}\label{num_example4}
	In this example, we consider the problem of flow around a circular cylinder offset slightly in a channel, the geometry and the $ \mathcal{T}_{h}^{4} $-like initial mesh are displayed in Figure \ref{exam4_domainmesh}. The left vertical side and the right vertical side of the whole domain are the inﬂow boundary and the outﬂow boundary, respectively. Both the inflow and outflow profile are set as
	\begin{align*}
		\bm{u} = \begin{pmatrix}
			\frac{6}{0.41^2}\sin(\frac{\pi}{8})y(0.41-y) \\
			0
		\end{pmatrix}.
	\end{align*}
	No-slip boundary conditions are prescribed along the top and bottom boundaries. In this fluid flow, where the cylinder is an obstacle, the velocity of the fluid around the cylinder varies considerably, so we expect our estimator can provide a guidance for mesh refining.
	
	Here, we take the polynomial order $ k=1 $ and the viscosity $ \nu=1 $. We start from the initial mesh, see Figure \ref{exam4_mesh}, and the required 8 iterations to reach the stopping criterion $ tol=0.15 $. The successive refinements are plotted in Figure \ref{exam4_refine_mesh}. It can be seen from these refined meshes that the local mesh refinement is carried out near the cylinder, which means that our estimator can be effective in capturing the region where the velocity changes drastically. Finally, we show the streamline of the velocity in Figure \ref{exam4_meshvelocity}.
	
	\begin{figure}[!ht]
		\begin{center}
			\subfigure[The domain $ \Omega $ ]{
				\label{exam4_domain}
				\centering
				\includegraphics[scale=0.7]{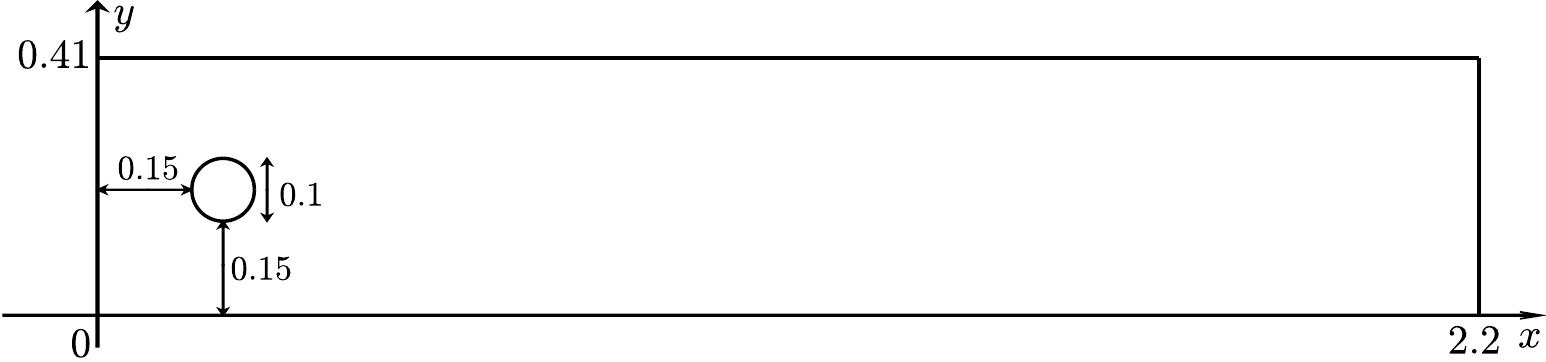}      
			}
			\qquad\qquad
			\subfigure[The $ \mathcal{T}_{h}^{4} $-like mesh]{
				\label{exam4_mesh}
				\centering
				\includegraphics[scale=0.7,trim=70 160 60 160,clip]{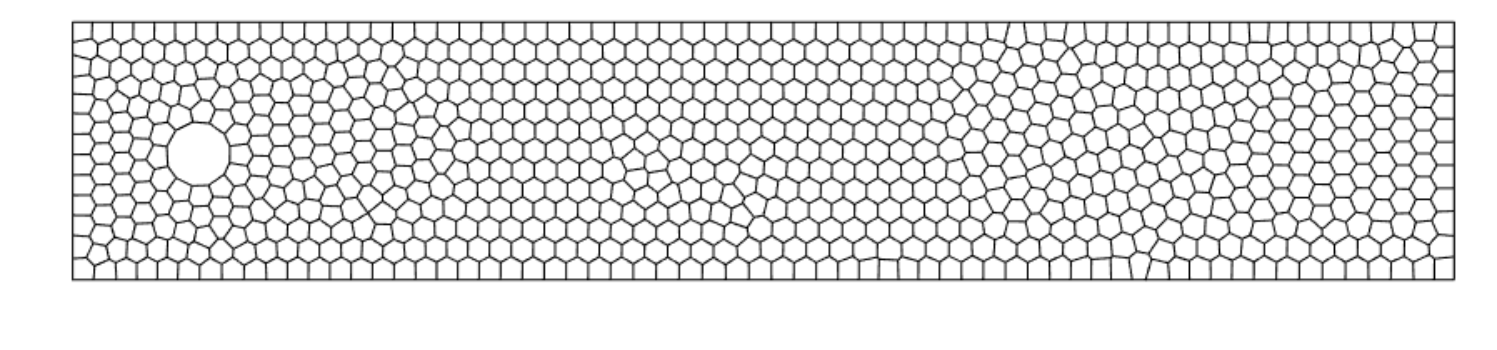}
			}
		\end{center}
		\vspace{-1.5em}
		\caption{Example 4: Illustrations of domain and mesh.} \label{exam4_domainmesh}
	\end{figure}

	\begin{figure}[!ht]
		\begin{center}
			\subfigure[]{\label{exam4_refine_mesh_1}\includegraphics[scale=0.55,trim=70 165 70 165,clip]{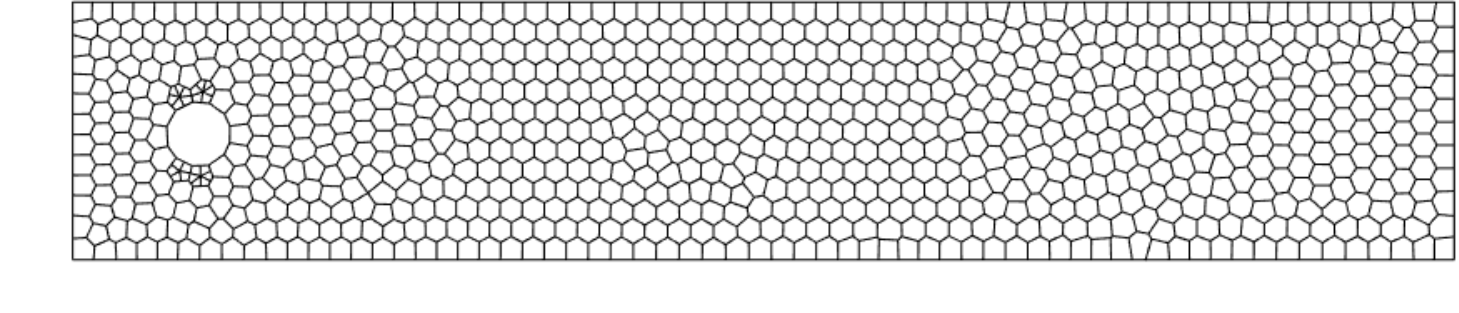} }
			\subfigure[]{ \label{exam4_refine_mesh_2} \includegraphics[scale=0.55,trim=70 165 70 165,clip]{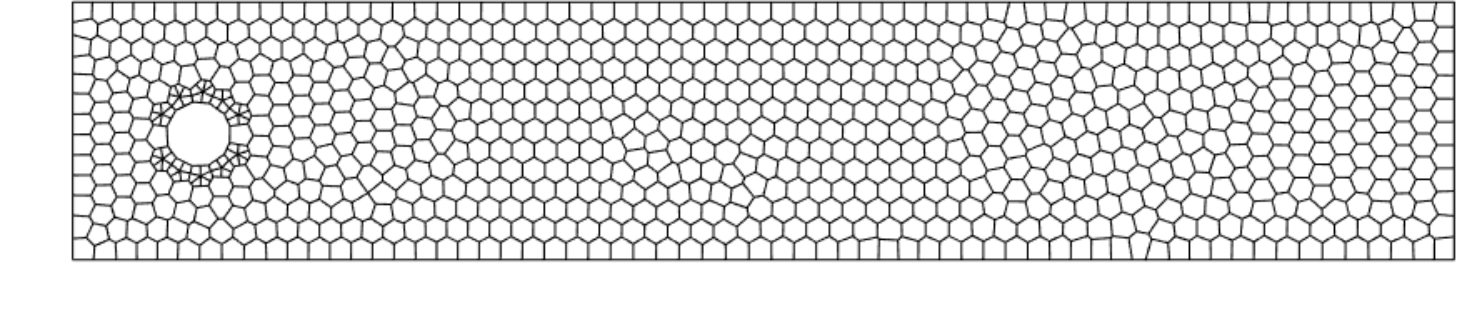} }
			\subfigure[]{\label{exam4_refine_mesh_3} \includegraphics[scale=0.55,trim=70 165 70 165,clip]{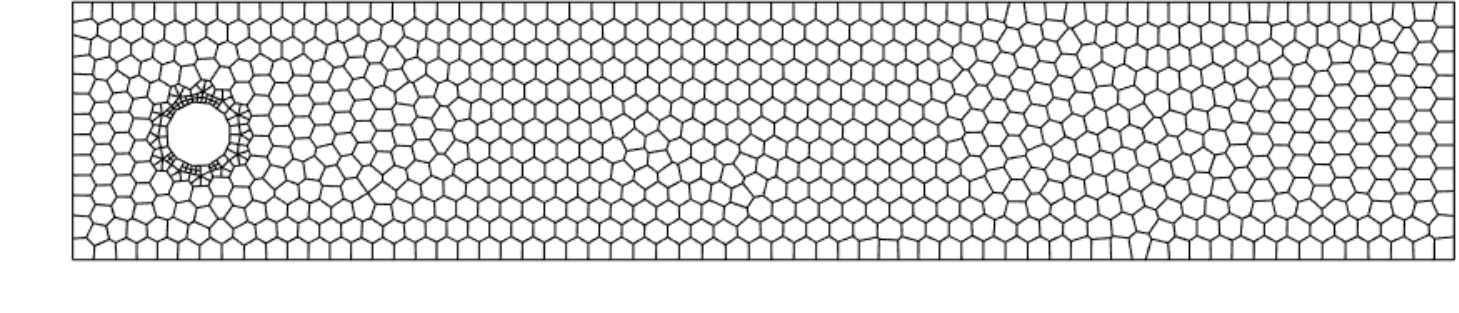} }
			\subfigure[]{ \label{exam4_refine_mesh_4} \includegraphics[scale=0.55,trim=70 165 70 165,clip]{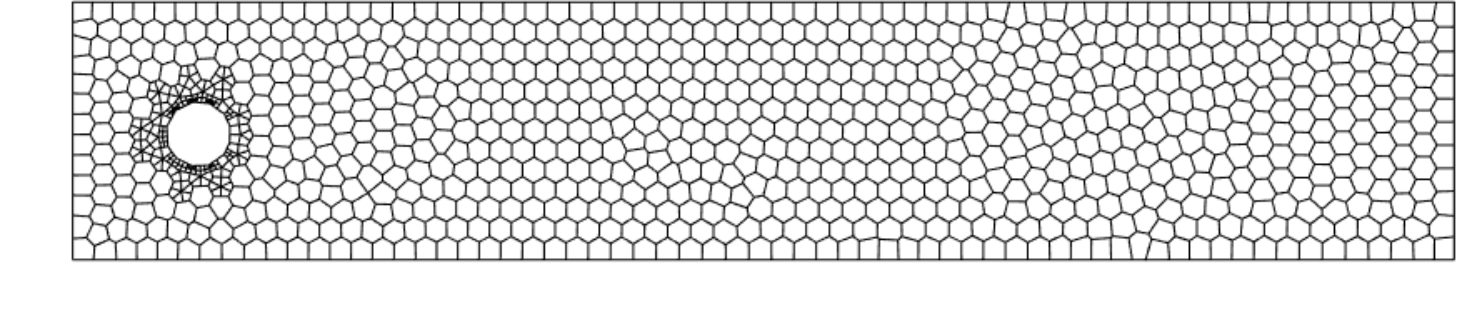} }
		\end{center}
		\vspace{-1.5em}
		\caption{Example 4: Meshes after 1, 3, 5 and 8 iterations, respectively.} \label{exam4_refine_mesh}
	\end{figure}

	\begin{figure}[!ht]
		\begin{center}
			\includegraphics[scale=0.6]{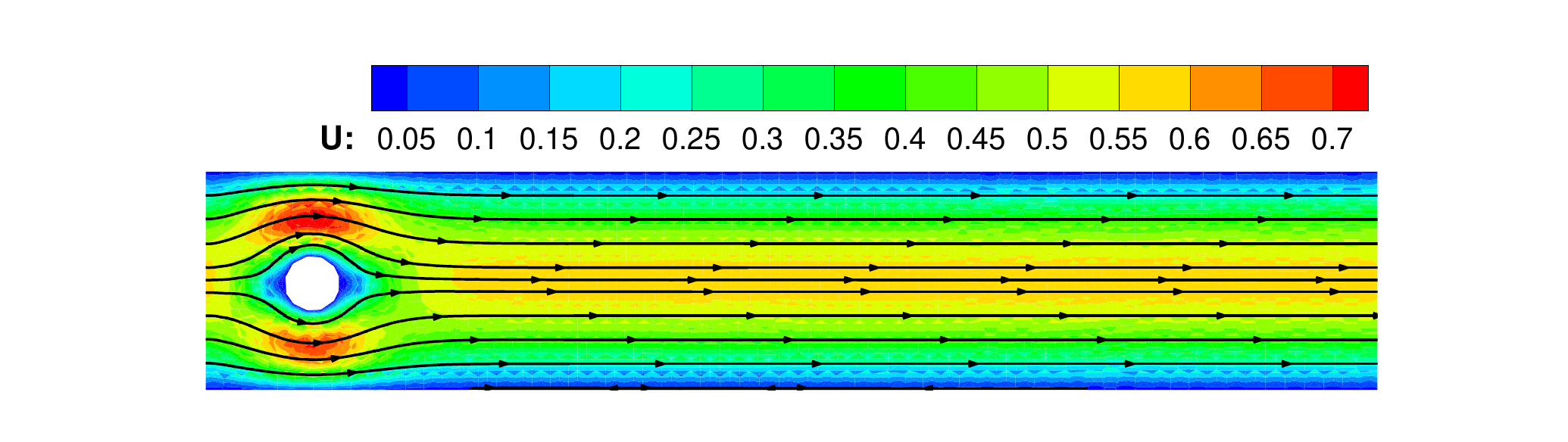}
		\end{center}
		\vspace{-2em}
		\caption{Example 4: The velocity and the streamline.} \label{exam4_meshvelocity}
	\end{figure}

	\subsection{Example 5}\label{num_example5}
	In this test, we take the polynomial order $ k=2 $, the viscosity $ \nu=1 $ and, let the domain be $ \Omega = (-2, 2)\times (-1,0) $, where the geometry and the $ \mathcal{T}_h^2 $-like mesh are shown in Figure \ref{exam5_domainmesh}. We consider the Stokes problem from \citep[Step 22]{dealII92}, which relates to a problem in geophysics that we want to compute the flow field of magma in the earth's interior under a mid-ocean rift. Rifts are places where two continental plates are very slowly drifting apart (a few centimeters per year at most), leaving a crack in the earth crust that is filled with magma from below. Without trying to be entirely realistic, we model this situation by taking the source term $ \bm{f}=\bm{0} $ and setting the following boundary conditions:
	\begin{align*}
		\bm{u} = \begin{pmatrix}
			-1 \\ 
			0
		\end{pmatrix} \text{ at } y =0,\ x < 0; \quad
		\bm{u} = \begin{pmatrix}
			1 \\ 
			0
		\end{pmatrix} \text{ at } y =0,\ x > 0; \quad
		\bm{u} = \begin{pmatrix}
			0 \\ 
			0
		\end{pmatrix} \text{ at } y =0,\ x = 0,
	\end{align*}
	and using natural boundary conditions $ [p\mathbf{I} - \nabla \bm{u}]\bm{n} = \bm{0} $ everywhere else. By the settings of boundary conditons, we expect that the flow field will pull material from below and move it to the left and right ends of the domain. The discontinuity of velocity boundary conditions will produce a singularity in the pressure at the center of the upper boundary that sucks material all the way to the upper boundary to fill the gap left by the outward motion of material at this location.
	
	After 15 iterations, the refined meshes are shown in Figure \ref{exam5_refine_mesh}. What's more, Figure \ref{exam5_meshvelocity} gives the velocity and the corresponding streamline, which shows that the fluid transported along with the moving upper boundary and being replaced by material coming from below. Observe how the grid is refined in regions where the solution rapidly changes: On the upper boundary, we have Dirichlet boundary conditions that are $ -1 $ in the left half of the line and $ 1 $ in the right one, so there is an abrupt change at $ x=0 $. Likewise, there are changes from Dirichlet to Neumann data in the two upper corners, so there is need for refinement there as well, but here the change in velocity is not as dramatic as at $ x=0 $, as can also be seen in Figure \ref{exam5_meshvelocity}.

	\begin{figure}[!ht]
		\begin{center}
			\subfigure[The domain $ \Omega $ ]{
				\label{exam5_domain}
				\centering
				\includegraphics[scale=0.6,trim=70 140 50 130,clip]{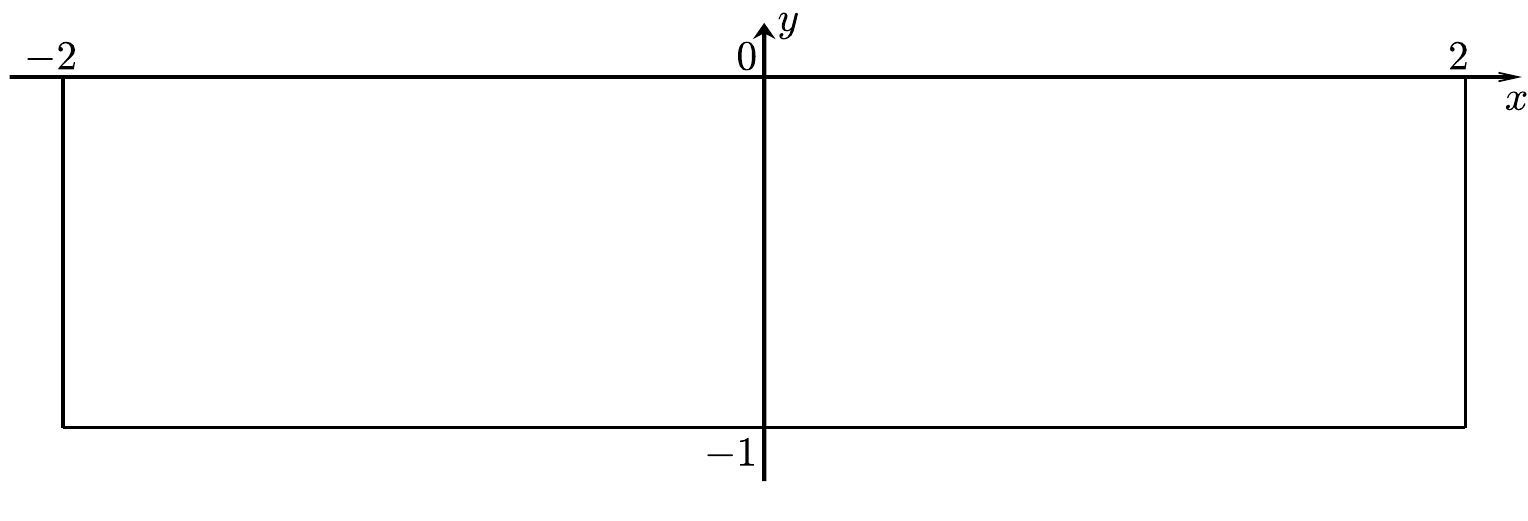}      
			}
			\qquad\qquad
			\subfigure[The $ \mathcal{T}_{h}^{2} $-like mesh]{
				\label{exam5_mesh}
				\centering
				\includegraphics[scale=0.59,trim=78 160 60 140,clip]{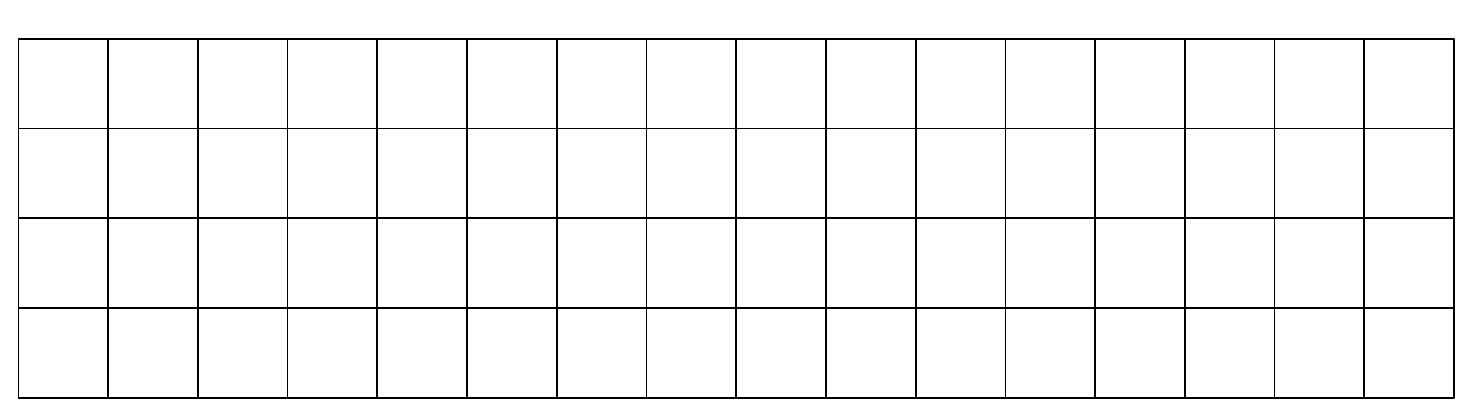}
			}
		\end{center}
		\vspace{-1.5em}
		\caption{Example 5: Illustrations of domain and mesh.} \label{exam5_domainmesh}
	\end{figure}

	\begin{figure}[!ht]
		\begin{center}
			\subfigure[]{ \label{exam5_refine_mesh_1}\includegraphics[scale=0.53,trim=65 145 60 150,clip]{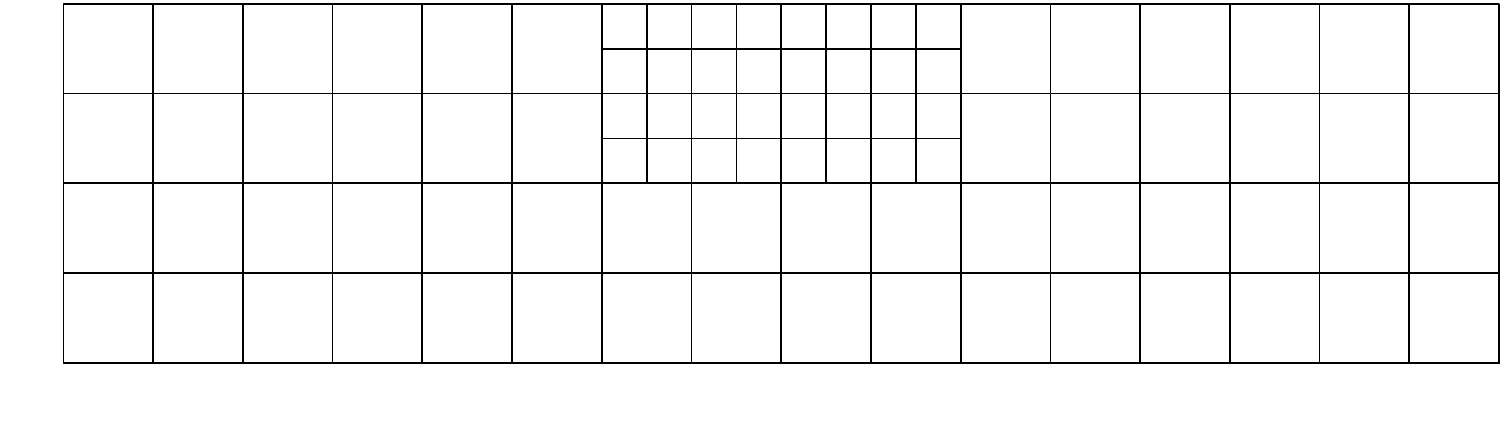}}
			\subfigure[]{ \label{exam5_refine_mesh_2}\includegraphics[scale=0.53,trim=65 145 60 150,clip]{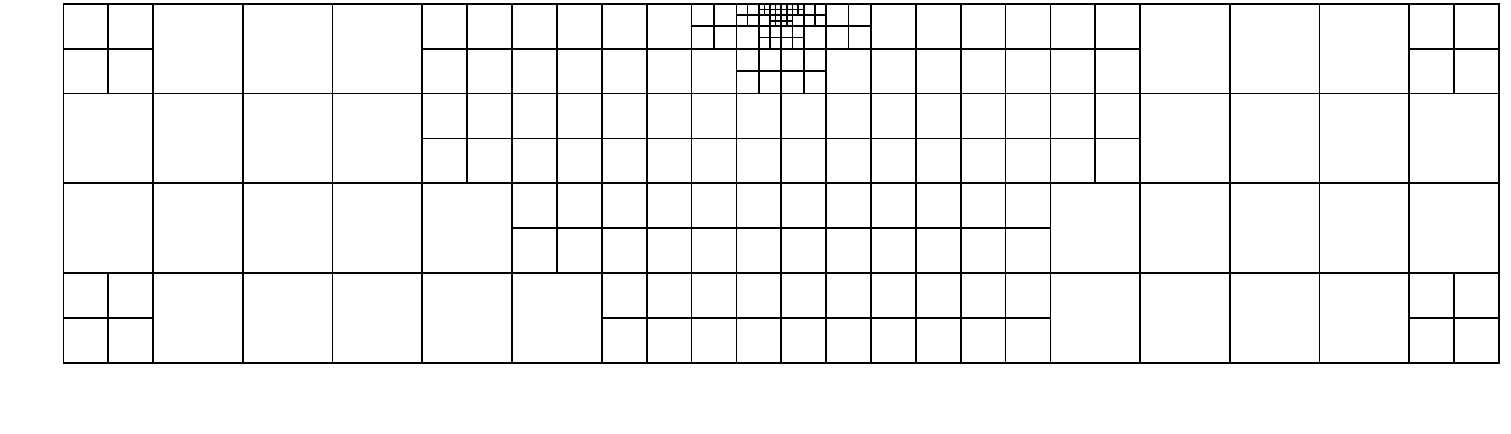}}
			\subfigure[]{ \label{exam5_refine_mesh_3}\includegraphics[scale=0.53,trim=65 145 60 150,clip]{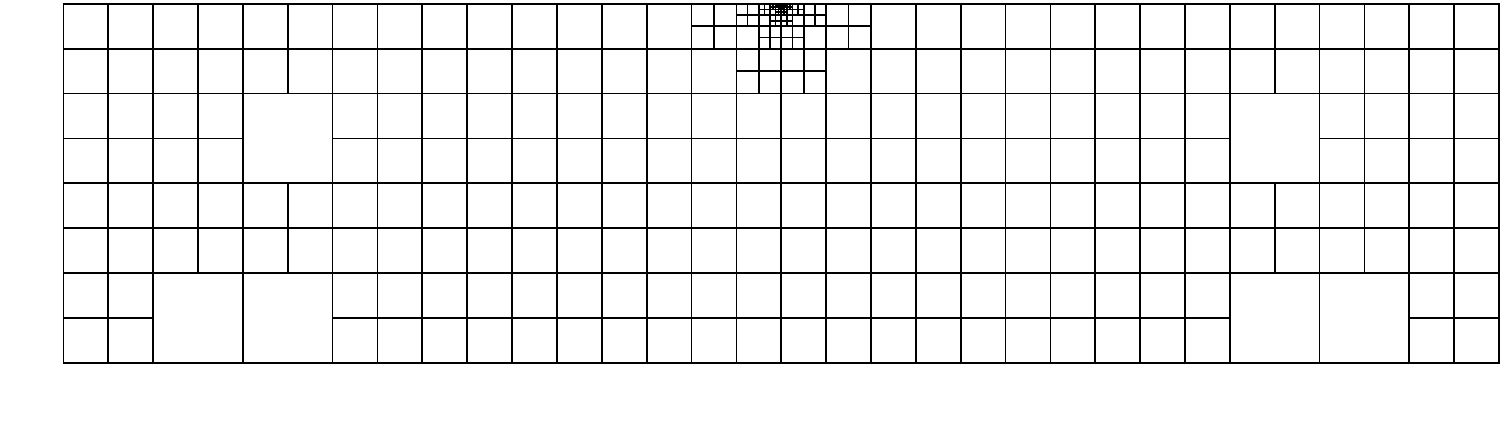}}
			\subfigure[]{ \label{exam5_refine_mesh_4}\includegraphics[scale=0.53,trim=65 145 60 150,clip]{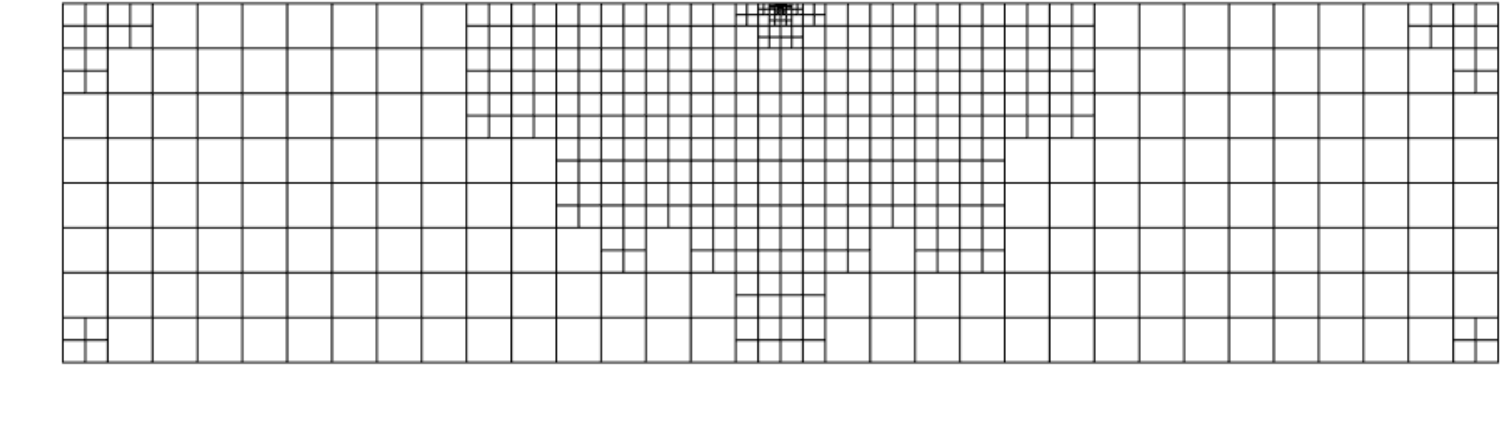}}
			\subfigure[]{ \label{exam5_refine_mesh_5}\includegraphics[scale=0.53,trim=65 145 60 150,clip]{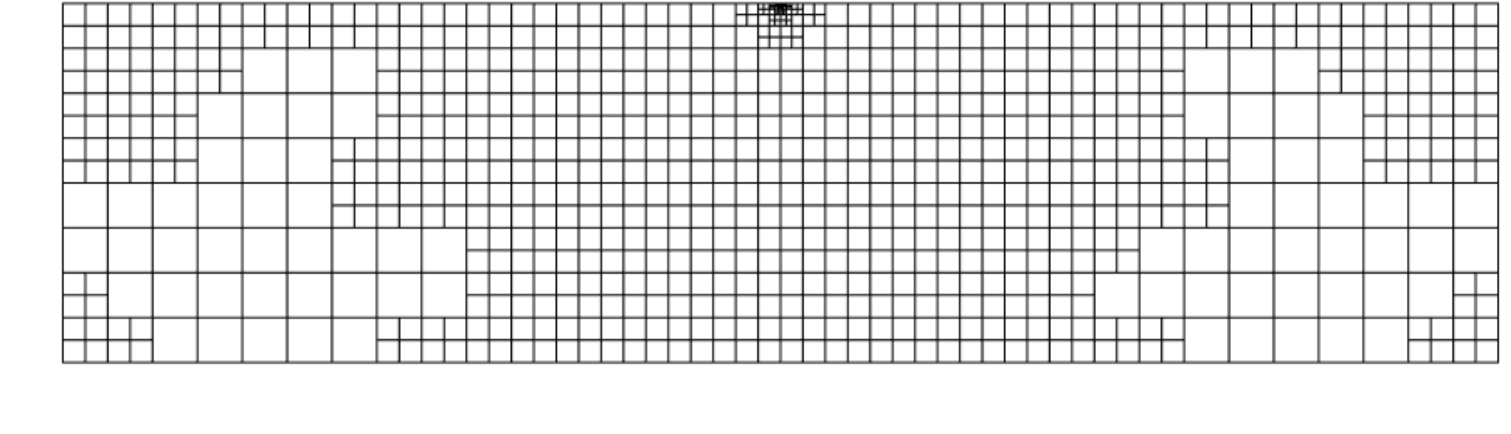}}
			\subfigure[]{ \label{exam5_refine_mesh_6}\includegraphics[scale=0.53,trim=65 145 60 150,clip]{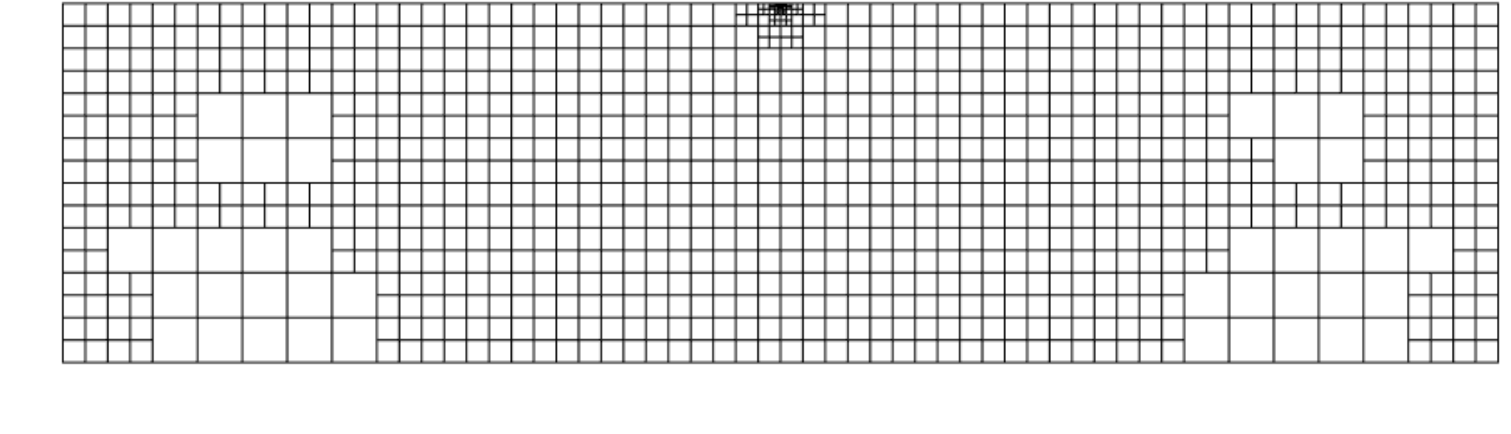}}
		\end{center}
		\vspace{-1.5em}
		\caption{Example 5: Meshes after 1, 4, 7, 13, and 15 iterations, respectively.} \label{exam5_refine_mesh}
	\end{figure}
	
	\begin{figure}[!ht]
		\begin{center}
			\includegraphics[scale=0.7]{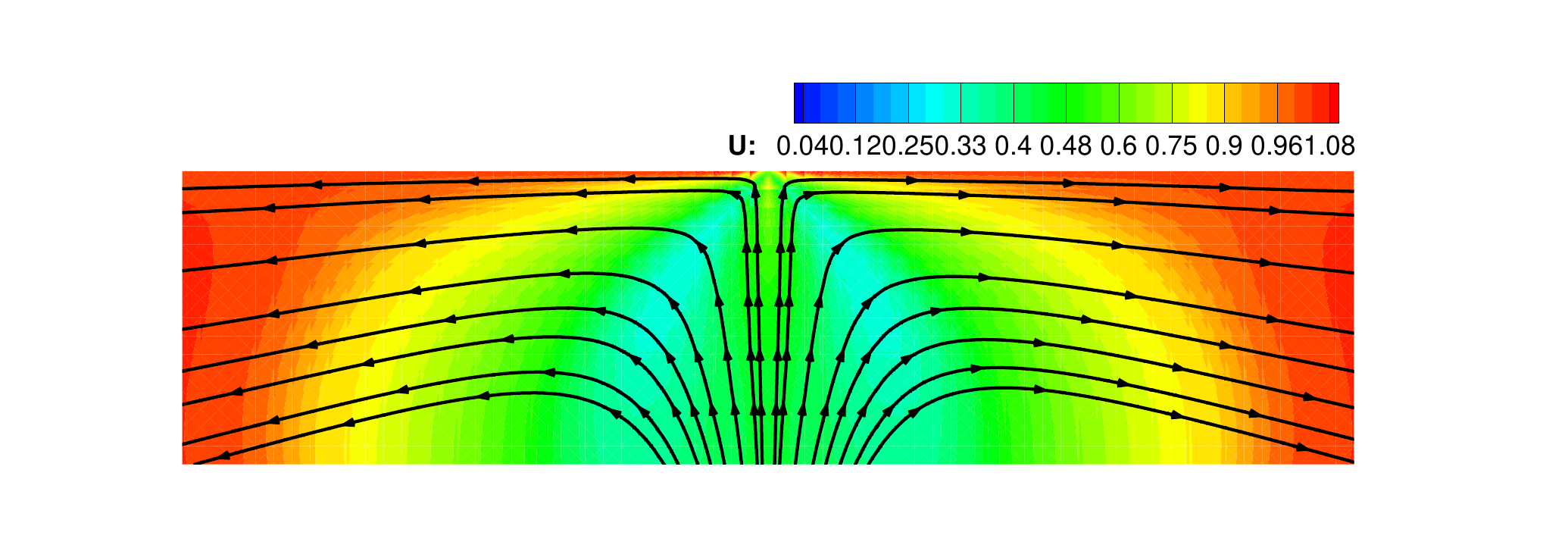}
		\end{center}
		\vspace{-2.5em}
		\caption{Example 5: The velocity and the streamline.} \label{exam5_meshvelocity}
	\end{figure}
	
	\section{Conclusion}\label{Conclusion}
	In this paper, we have presented a residual type a posteriori error estimator for the hybrid high-order (HHO) method for the Stokes problem. It is proved that the proposed estimator has the upper bound and lower bound, and this leads to the final adaptive algorithm of HHO method for the Stokes problem. The HHO method and the estimator allow the use of general meshes and support arbitrary approximation orders, which simplifies the procedure of adaptive mesh refinement and makes it easy to obtain high order computational accuracy. Some numerical examples are reported to illustrate the good performance of our estimator in the adaptive algorithm.

	\section*{Acknowledgements}
	The authors should thank Huayi Wei from Xiangtan University, China, for the valuable discussions of the codes in FEALPy.

	%%---- bibliography, using .bst file
	\setlength{\bibsep}{0.5ex}  %% 'ex' stands for 'extra'
	\bibliography{yc_ref}
	
\end{document}